\documentclass[article,onefignum,onetabnum]{siamonline220329}

\usepackage{algorithmic}
\usepackage{amsfonts}
\usepackage{amsmath}
\usepackage{amsopn}
\usepackage{amssymb}
\usepackage{booktabs}
\usepackage{url}
\usepackage{color}
\usepackage{array}
\usepackage{colortbl}
\usepackage{blkarray}
\usepackage{colortbl}
\usepackage{enumitem}
\setlist[enumerate]{leftmargin=.5in}
\setlist[itemize]{leftmargin=.5in}
\usepackage{epstopdf}
\usepackage{fancyhdr}
\usepackage{fancyvrb}
\usepackage{float}
\usepackage[T1]{fontenc}
\usepackage{framed}
\usepackage{geometry}
\usepackage{graphicx}
\usepackage{hyperref}
\usepackage[latin1]{inputenc}
\usepackage{lipsum}
\usepackage{multirow}
\usepackage{scalerel}
\usepackage{stmaryrd}
\usepackage{subfigure}
\usepackage{todonotes}
\usepackage{upquote}
\usepackage{xspace}

\definecolor{lightgray}{gray}{0.5}
\definecolor{darkgreen}{rgb}{0,0.6,0.13}

\newcommand{\nc}{\newcommand}
\nc{\dsp}{\displaystyle}
\nc{\txt}{\textstyle}
\nc{\creff}[1]{(\cref{#1})}
\nc{\mrm}[1]{\mathrm{#1}}
\nc{\udl}[1]{\underline{#1}}
\nc{\ovl}[1]{\overline{#1}}
\nc{\al}{\underline{\boldsymbol{\alpha}}}
\nc{\la}{\underline{\boldsymbol{\lambda}}}
\nc{\llbr}{\llbracket}
\nc{\rrbr}{\rrbracket}
\nc{\lbr}{\lbrack}
\nc{\rbr}{\rbrack}
\nc{\N}{\mathbb{N}}
\nc{\Z}{\mathbb{Z}}
\nc{\D}{\mathbb{D}}
\nc{\Q}{\mathbb{Q}}
\nc{\R}{\mathbb{R}}
\nc{\C}{\mathbb{C}}
\nc{\T}{\mathbb{T}}
\nc{\Stwo}{\mathbb{S}^2}
\nc{\tld}[1]{\tilde{#1}}
\nc{\wtld}[1]{\widetilde{#1}}
\nc{\hu}{\hat{u}}
\nc{\wh}[1]{\widehat{#1}}
\nc{\Fbf}{\textbf{F}}
\nc{\Gbf}{\textbf{G}}
\nc{\Lbf}{\textbf{L}}
\nc{\Nbf}{\textbf{N}}
\nc{\Ibf}{\textbf{I}}
\nc{\Dbf}{\textbf{D}} 
\nc{\Tbf}{\textbf{T}}
\nc{\Jbf}{\textbf{J}} 
\nc{\Rbf}{\textbf{R}}   
\nc{\ph}{\varphi}
\nc{\NN}{\mathcal{NN}}
\nc{\OO}{\mathcal{O}}
\nc{\cqfd}{~\hbox{\vrule width 2.5pt depth 2.5 pt height 3.5 pt}}
\nc{\ra}[1]{}
\nc{\bs}[1]{\boldsymbol{#1}}
\nc{\rr}[1]{\textcolor{red}{#1}}
\nc{\bb}[1]{\textcolor{blue}{#1}}
\nc{\eqvsp}{\\[0.25em]}
\nc{\eqvspp}{\\[0.5em]}
\nc{\eqVsp}{\\[0.75em]}
\nc{\eqVspp}{\\[1em]}
\nc{\dmu}{\delta\hspace{-0.025cm}\mu\hspace{0.02cm}}
\nc{\dnu}{\delta\hspace{-0.010cm}\nu\hspace{0.01cm}}
\nc{\dep}{\delta\hspace{-0.015cm}\epsilon\hspace{0.02cm}}
\newsiamremark{remark}{Remark}
\newsiamremark{hypothesis}{Hypothesis}
\crefname{hypothesis}{Hypothesis}{Hypotheses}
\newsiamthm{claim}{Claim}
\ifpdf
  \DeclareGraphicsExtensions{.eps,.pdf,.png,.jpg}
\else
  \DeclareGraphicsExtensions{.eps}
\fi

\usetikzlibrary{arrows.meta,bending,shapes,shapes.misc}
\tikzset{cross/.style={cross out, ultra thick, draw, fill, minimum size=2*(#1-\pgflinewidth), inner sep=0pt, outer sep=0pt}, cross/.default={7pt}}

\title{The linear sampling method for random sources\thanks{Submitted to the editors 27 October 2022.
\funding{This work was supported by the Interdisciplinary Centre for Defence and Security (CIEDS).}}}

\headers{The linear sampling method for random sources}{J. Garnier, H. Haddar, and H. Montanelli}

\ifpdf
\hypersetup{
  pdftitle={The linear sampling method for random sources},
  pdfauthor={J. Garnier, H. Haddar, and H. Montanelli}
}
\fi

\author{Josselin Garnier\thanks{CMAP, \'{E}cole Polytechnique, IP Paris, 91120 Palaiseau, France.}
\and Houssem Haddar\thanks{Inria, UMA, Ensta Paris, IP Paris, 91120 Palaiseau, France.}
\and Hadrien Montanelli\footnotemark[3]}

\begin{document}

\maketitle

\begin{abstract}
We present an extension of the linear sampling method for solving the sound-soft inverse acoustic scattering problem with randomly distributed point sources. The theoretical justification of our sampling method is based on the Helmholtz--Kirchhoff identity, the cross-correlation between measurements, and the volume and imaginary near-field operators, which we introduce and analyze. Implementations in MATLAB using boundary elements, the SVD, Tikhonov regularization, and Morozov's discrepancy principle are also discussed. We demonstrate the robustness and accuracy of our algorithms with several numerical experiments in two dimensions.
\end{abstract}

\begin{keywords}
inverse acoustic scattering problem, Helmholtz equation, linear sampling method, passive imaging, singular value decomposition, Tikhonov regularization, ill-posed problems
\end{keywords}

\begin{MSCcodes}
35J05, 35R30, 35R60, 65M30, 65M32
\end{MSCcodes}

\section{Introduction}

A typical inverse scattering problem is the identification of the shape of a defect inside a medium by sending waves that propagate within. In the data acquisition step, several receivers record the medium's response that forms the data of the inverse problem; in the data processing step, numerical algorithms are used to recover an approximation of the shape from the measurements. What we have just described is an example of \textit{active} imaging, where both the sources and the receivers are controlled. In \textit{passive} imaging, only receivers are employed and the illumination comes from uncontrolled, random sources. In this setup, it is the cross-correlations between the recorded signals that convey information about the medium through which the waves propagate \cite{garnier2016, gouedard2008}. This information can be exploited, e.g., for velocity estimation \cite{curtis2006} or reflector imaging \cite{garnier2009, garnier2010}. Particularly important applications include crystal tomography and volcano monitoring by seismic interferometry \cite{koulakov2014, shapiro2005}. In seismology applications, the sources are typically microseisms and ocean swells. Passive imaging has also been successful in other domains, such as structural health monitoring \cite{duroux2010, sabra2011}, oceanography \cite{godin2010, siderius2010, woolfe2015}, or medical elastography \cite{gallot2011}.

We deal, in this paper, with the data processing step for the sound-soft inverse acoustic scattering problem, around resonance, in passive imaging. While there are many techniques available for this problem in active imaging, including PDE-constrained optimization \cite{bourgeois2012}, Newton \cite{hohage1998, kirsch1993}, level-set \cite{dorn2006}, factorization \cite{kirsch2007} and sampling methods \cite{colton2003, colton1996}, only linearization approaches have been developed so far in passive imaging \cite{ammari2012b, ammari2012a}. However, around resonance, inverse scattering problems are highly nonlinear---linearization strategies cannot be employed. On top of their long reconstruction times, PDE-constrained optimization, and Newton and level-set methods rely on some a priori information to initialize the corresponding iterative procedures. This is not the case for factorization and sampling methods, which have notable computational speed and require very little a priori information on the scatterer. The linear sampling method (LSM) goes back to Colton and Kirsch in 1996 \cite{colton1996}; regularization was introduced the following year \cite{colton1997}, and significant numerical validation was reported in 2003 \cite{colton2003}. Several extensions to the LSM have been proposed in the last two decades, including the generalized LSM, based on an exact characterization of the target's shape in terms of the far-field operator, for full- \cite{audibert2014} and limited-aperture measurements \cite{audibert2017}. Details about the history and the evolution of the LSM can be found in the 2018 SIAM review article of Colton and Kress \cite{colton2018}; details about the mathematics in the books \cite{cakoni2014, cakoni2016, colton2019}. For factorization methods, we refer to the book of Kirsch and Grinberg \cite{kirsch2007}. Links between sampling and factorization methods can be found in \cite{cakoni2016}. To conclude this paragraph, it is worth highlighting that when it comes to high-definition reconstructions, iterative methods may prove to be the most suitable approach. Combining sampling and iterative methods can be advantageous in such cases, e.g., using the former to initialize the latter, as it allows for the benefits of both methods to be leveraged \cite{bao2007}.

We propose, in this paper, an extension of the LSM for solving the sound-soft inverse acoustic scattering problem with randomly distributed point sources. The theoretical justification of our method is based on the Helmholtz--Kirchhoff identity and the cross-correlation between measurements (\cref{sec:HK-id}), and on the mathematical properties of the volume (\cref{sec:vol-near-field}) and imaginary near-field operators  (\cref{sec:imag-near-field}). Implementations in MATLAB using boundary elements, the SVD, Tikhonov regularization, and Morozov's discrepancy principle are also discussed, together with numerical experiments in two dimensions (\cref{sec:numerics}). 

\section{The Helmholtz--Kirchhoff identity and cross-correlation}\label{sec:HK-id}

Acoustic scattering is governed by the Helmholtz equation $\Delta u+k^2u=0$, whose Green's function is\footnote{The Green's function is the solution to $\Delta u(\bs{x},\bs{y})+k^2u(\bs{x},\bs{y})=\delta(\bs{x}-\bs{y})$ in $\R^d$.}
\begin{align}\label{eq:phi}
\phi(\bs{x},\bs{y}) = \frac{i}{4}H_0^{(1)}(k\vert\bs{x}-\bs{y}\vert) \quad (d=2), \quad \quad \phi(\bs{x},\bs{y}) = \frac{e^{ik\vert\bs{x}-\bs{y}\vert}}{4\pi \vert\bs{x}-\bs{y}\vert} \quad (d=3).
\end{align}
The number $k>0$ is the wavenumber and $H^{(1)}_0$ denotes the Hankel function of the first kind of order $0$. The Helmholtz--Kirchhoff identity for the Green's function reads \cite[Thm.~2.2]{garnier2016} 
\begin{align}\label{eq:HK}
\phi(\bs{x},\bs{y}) - \overline{\phi(\bs{x},\bs{y})} = 2ik\int_{\Sigma}\overline{\phi(\bs{x},\bs{z})}\phi(\bs{y},\bs{z})dS(\bs{z}),
\end{align}
where the surface $\Sigma$ encloses $\bs{x}$ and $\bs{y}$ and is far from them. The identity \cref{eq:HK} connects the imaginary part of the measurements at $\bs{x}$ of point sources located at $\bs{y}$ (left) to the cross-correlation between the measurements at $\bs{x}$ and $\bs{y}$ of point sources located on $\Sigma$ (right).

The Helmholtz--Kirchhoff identity applies to total fields, too. Let $D\subset\R^d$ be a bounded domain whose complement is connected and $\bs{y}\in\R^d$. A point source located at $\bs{y} \in \R^d\setminus\overline{D}$ transmits a unit-amplitude time-harmonic signal that generates the incident field $\phi(\cdot,\bs{y})$. The scattered field $u^s(\cdot,\bs{y})\in H^1_{\mrm{loc}}(\R^d\setminus D)$ for a sound-soft defect $D$ is the solution to
\begin{align}\label{eq:ext-Dirichlet}
\left\{
\begin{array}{ll}
\Delta u^s(\cdot,\bs{y}) + k^2 u^s(\cdot,\bs{y}) = 0 \quad \text{in $\R^d\setminus\overline{D}$}, \\[0.4em]
\phi(\cdot,\bs{y}) + u^s(\cdot,\bs{y}) = 0 \quad \text{on $\partial D$}, \\[0.4em]
\text{$u^s(\cdot,\bs{y})$ is radiating}.
\end{array}
\right.
\end{align}
Note that the (Sommerfeld) radiation condition in \cref{eq:ext-Dirichlet} reads
\begin{align}\label{eq:Sommerfeld}
\lim_{r\to\infty} r^{\frac{d-1}{2}}\left(\frac{\partial u^s}{\partial r} - iku^s\right) = 0, \quad r = \vert\bs{x}\vert \quad \text{(uniformly in $\bs{x}/\vert\bs{x}\vert$)}.
\end{align}
The total field is then defined as $u(\bs{x},\bs{y})=\phi(\bs{x},\bs{y})+u^s(\bs{x},\bs{y})$. Similar arguments to those used in the proof of \cite[Thm.~2.2]{garnier2016} lead to the Helmholtz--Kirchhoff identity for total fields,
\begin{align}\label{eq:HK-total}
u(\bs{x},\bs{y}) - \overline{u(\bs{x},\bs{y})} = 2ik\int_{\Sigma}\overline{u(\bs{x},\bs{z})}u(\bs{y},\bs{z})dS(\bs{z}).
\end{align}
This immediately implies the following relationship for the scattered fields,
\begin{align}\label{eq:HK-scattered}
u^s(\bs{x},\bs{y}) - \overline{u^s(\bs{x},\bs{y})} = 2ik\int_{\Sigma}\overline{u(\bs{x},\bs{z})}u(\bs{y},\bs{z})dS(\bs{z}) - \left[\phi(\bs{x},\bs{y})-\overline{\phi(\bs{x},\bs{y})}\right].
\end{align}

Let $\bs{x}_j$, $1\leq j\leq J$, and $\bs{y}_m$, $1\leq m\leq M$, be measurement points and point sources both located on a surface $\partial B$ that encloses $D$. The standard LSM for near-field measurements, in active imaging, relies on the construction of the near-field matrix $N^\mrm{ac}$ with entries
\begin{align}\label{eq:near-field-mat-ac}
N^\mrm{ac}_{jm} = u^s(\bs{x}_j,\bs{y}_m), \quad 1\leq j\leq J, \quad 1\leq m\leq M.
\end{align}
The matrix $N^\mrm{ac}$ corresponds to the medium's response, measured at $\bs{x}_j$, to the illumination by point sources located at $\bs{y}_m$; see \cref{fig:active-LSM}. Here, the positions of both measurement points and point sources are known and controlled. This matrix is filled out in the data acquisition step, which entails the solution of \cref{eq:ext-Dirichlet} for each $\bs{y}_m$. In the processing step, we numerically probe the medium by solving the linear system $N^\mrm{ac}g_{\bs{z}}=\phi_{\bs{z}}$ for various $\bs{z}\in\R^d$. (The right-hand side is defined by $(\phi_{\bs{z}})_j = \phi(\bs{x}_j, \bs{z})$, $1\leq j\leq J$.) The boundary $\partial D$ of the defect $D$ coincides with those points $\bs{z}$ for which the value of $\Vert g_{\bs{z}}\Vert_2$ is large \cite[sect.~5.6]{colton2019}.

\begin{figure}
\centering
\begin{tikzpicture}[scale=1.0]

\def\R{2.5};
\def\s{0.55};
\def\t{7};
\def\x{-0.9238795325112867};
\def\y{0.38268343236508984};
\def\xx{-0.7071067811865475};
\def\yy{0.7071067811865476};
\def\theta{10};

\draw[black, thick, fill=black!10] plot[smooth cycle] coordinates {(-2*\R/\t,-1.5*\R/\t) (-2*\R/\t,1.5*\R/\t) (2*\R/\t,\R/\t) (2*\R/\t,-\R/\t)};
\node[anchor=north] at (0,0.5*\R/\t) {$D$};

\draw[black, dashed] (0,0) circle (\R);
\node[anchor=south] at (0,1.05*\R) {$\partial B$};

\node[draw, fill, circle, scale=\s] at (\R*-1, \R*0) {};
\node[anchor=east] at (\R*-1.1,\R*0) {$\bs{y}_m$};
\node[draw, fill, circle, scale=\s] at (\R*\x, \R*\y) {};
\node[draw, fill, circle, scale=\s] at (\R*\x, -\R*\y) {};
\node[draw, fill, circle, scale=\s] at (\R*\xx, \R*\yy) {};
\node[draw, fill, circle, scale=\s] at (\R*\xx, -\R*\yy) {};

\node[cross, rotate=\theta, scale=\s] at (\R*1, \R*0) {};
\node[anchor=west] at (\R*1.1,\R*0) {$\bs{x}_j$};
\node[cross, rotate=\theta, scale=\s] at (-\R*\x, \R*\y) {};
\node[cross, rotate=\theta, scale=\s] at (-\R*\x, -\R*\y) {};
\node[cross, rotate=\theta, scale=\s] at (-\R*\xx, \R*\yy) {};
\node[cross, rotate=\theta, scale=\s] at (-\R*\xx, -\R*\yy) {};

\end{tikzpicture}
\caption{In active imaging, the defect $D$ is illuminated by controlled, deterministic point sources located at $\bs{y}_m$; the medium's response is recorded at $\bs{x}_j$. Here, the $\bs{y}_m$'s and the $\bs{x}_j$'s are located on a surface $\partial B$ that encloses $D$---more general configurations, including limited-aperture measurements, are allowed \normalfont{\cite{audibert2017}}.}
\label{fig:active-LSM}
\end{figure}
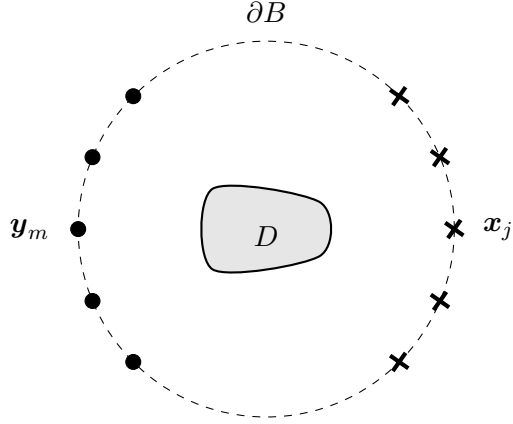

We now turn our attention to passive imaging. In the first setup, we assume that the measurement points $\bs{x}_j$ are located in some bounded volume $B\subset\R^d\setminus\overline{D}$ (as opposed to a surface $\partial B$; we shall justify this later). Let $\Sigma$ be a surface that encloses $B$ and $D$, and let us assume that there are $L>0$ point sources $\bs{z}_\ell$ randomly distributed on $\Sigma$. These sources can transmit a unit-amplitude time-harmonic signal, one by one, so that it is possible to measure the total fields $u(\bs{x}_j,\bs{z}_\ell)$. Moreover, it is possible to compute $\phi(\bs{x}_j,\bs{x}_m)$, so we can evaluate the cross-correlation matrix $C$ with entries
\begin{align}\label{eq:cross-cor-mat}
C_{jm} = \frac{2ik\vert\Sigma\vert}{L}\sum_{\ell=1}^L\overline{u(\bs{x}_j,\bs{z}_\ell)}u(\bs{x}_m,\bs{z}_\ell) - \left[\phi(\bs{x}_j,\bs{x}_m) - \overline{\phi(\bs{x}_j,\bs{x}_m)}\right], \quad 1\leq j,m\leq J,
\end{align}
where $\vert\Sigma\vert$ is the area of $\Sigma$. The matrix \cref{eq:cross-cor-mat} corresponds to the discretization of the right-hand side of \cref{eq:HK-scattered} at points $\bs{z}_\ell$ with uniform weights and evaluated at $\bs{x}_j$ and $\bs{x}_m$, i.e.,
\begin{align}\label{eq:trap-rule-approx}
C_{jm} \approx 2ik\int_{\Sigma}\overline{u(\bs{x}_j,\bs{z})}u(\bs{x}_m,\bs{z})dS(\bs{z}) - \left[\phi(\bs{x}_j,\bs{x}_m)-\overline{\phi(\bs{x}_j,\bs{x}_m)}\right].
\end{align}
The quadrature error in \cref{eq:trap-rule-approx} is $\OO(1/\sqrt{L})$ in general, in both two and three dimensions. In two dimensions, if the $\bs{z}_\ell$'s correspond to a $\beta$-perturbed trapezoidal rule, then the error improves to $\OO(1/L^{\sigma - 4\beta})$ whenever the integrand has $\sigma>4\beta+1/2$ derivatives \cite[Thm.~1]{austin2017b}. We will come back to this in \cref{sec:numerics}. Combining \cref{eq:trap-rule-approx} with \cref{eq:HK-scattered} we obtain
\begin{align}\label{eq:HK-MC}
C_{jm} \approx N_{jm} - \overline{N_{jm}}, \quad 1\leq j,m\leq J,
\end{align}
where the matrix $N$ is the near-field matrix \cref{eq:near-field-mat-ac} for co-located receivers and sources:
\begin{align}\label{eq:near-field-mat}
N_{jm} = u^s(\bs{x}_j,\bs{x}_m), \quad 1\leq j,m\leq J.
\end{align}
Note that the relationship \cref{eq:HK-MC} relates the imaginary part of the near-field matrix (right) to the cross-correlation between the total fields generated by the random sources $\bs{z}_\ell$ (left). This motivates the introduction of the imaginary near-field matrix $I$,
\begin{align}\label{eq:imag-near-field-mat}
I_{jm} = N_{jm} - \overline{N_{jm}} = u^s(\bs{x}_j,\bs{x}_m) - \overline{u^s(\bs{x}_j,\bs{x}_m)}, \quad 1\leq j,m\leq J.
\end{align}
In the rest of the paper, we will justify the utilization of the matrix \cref{eq:imag-near-field-mat} for the LSM in active imaging. If the LSM works for \cref{eq:imag-near-field-mat} in active imaging, then, via \cref{eq:HK-MC}, it will work for \cref{eq:cross-cor-mat} in passive imaging. The setup we have just described is illustrated in \cref{fig:passive-LSM}.

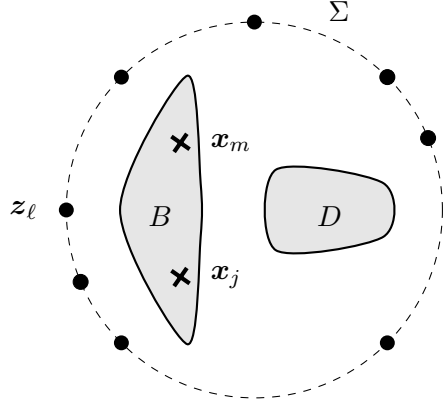
\begin{figure}
\centering
\begin{tikzpicture}[scale=1.0]

\def\R{2.5};
\def\s{0.55};
\def\t{7};
\def\x{-0.9238795325112867};
\def\y{0.38268343236508984};
\def\xx{-0.7071067811865475};
\def\yy{0.7071067811865476};
\def\z{0.7071067811865476};
\def\theta{10};

\draw[black, thick, fill=black!10, xshift=2/5*\R cm] plot[smooth cycle] coordinates {(-2*\R/\t,-1.5*\R/\t) (-2*\R/\t,1.5*\R/\t) (2*\R/\t,\R/\t) (2*\R/\t,-\R/\t)};
\node[anchor=north] at (2/5*\R,0.5*\R/\t) {$D$};

\draw[black, thick, fill=black!10] plot[smooth cycle] coordinates {(-2.5*\R/\t,5*\R/\t) (-5*\R/\t,0*\R/\t) (-2.5*\R/\t,-5*\R/\t) (-2*\R/\t,-\R/\t) (-2*\R/\t,\R/\t)};
\node[anchor=north] at (-3.5*\R/\t,0.5*\R/\t) {$B$};

\draw[black, dashed] (0,0) circle (\R);
\node[anchor=south] at (0.45*\R,0.95*\R) {$\Sigma$};

\node[fill, circle, scale=\s] at (\R*-1, \R*0) {};
\node[anchor=east] at (\R*-1.1,\R*0) {$\bs{z}_\ell$};
\node[fill, circle, scale=\s] at (\R*0, \R*1) {};
\node[draw, fill, circle, scale=\s] at (\R*\x, -\R*\y) {};
\node[draw, fill, circle, scale=\s] at (-\R*\x, \R*\y) {};
\node[draw, fill, circle, scale=\s] at (-\R*\xx, \R*\yy) {};
\node[fill, circle, scale=\s] at (\R*\z, \R*\z) {};
\node[fill, circle, scale=\s] at (\R*\z, -\R*\z) {};
\node[fill, circle, scale=\s] at (-\R*\z, \R*\z) {};
\node[fill, circle, scale=\s] at (-\R*\z, -\R*\z) {};

\node[cross, rotate=\theta, scale=\s] at (-2.75*\R/\t,2.5*\R/\t) {};
\node[anchor=west] at (-2*\R/\t,2.5*\R/\t) {$\bs{x}_m$};
\node[cross, rotate=\theta, scale=\s] at (-2.75*\R/\t,-2.5*\R/\t) {};
\node[anchor=west] at (-2*\R/\t,-2.5*\R/\t) {$\bs{x}_j$};

\end{tikzpicture}
\caption{In passive imaging, the defect $D$ is illuminated by uncontrolled, random sources located at points $\bs{z}_\ell$, which we assume to be distributed on a surface $\Sigma$. The medium's response is recorded at points $\bs{x}_j$ and $\bs{x}_m$, contained in some volume $B$; cross-correlation between these measurements is then carried out. This is equivalent, via the relationship \cref{eq:HK-MC}, to recording at $\bs{x}_j$ the imaginary part of the illumination by controlled, deterministic sources located at $\bs{x}_m$.}
\label{fig:passive-LSM}
\end{figure}

It is possible to consider another setup. We still assume that the measurement points $\bs{x}_j$  are located in some bounded volume $B\subset\R^d\setminus\overline{D}$. We assume that a noise source distribution localized on a surface $\Sigma$ enclosing $B$ and $D$ transmits random signals $(n(\bs{z}))_{\bs{z}\in\Sigma}$ such that
\begin{align}\label{eq:noise-distribution}
\left<n(\bs{z}) n(\bs{z}')\right> = \delta(\bs{z}-\bs{z}') \delta_{\Sigma}(\bs{z}),
\end{align}
where $\left<\cdot\right>$ stands for a statistical average. In other words, the noise source distribution is delta-correlated in space and uniformly distributed on the surface $\Sigma$. This random source generates the random incident field
\begin{align}
{\cal U}^{i}(\bs{x}) = \int_\Sigma \phi(\bs{x},\bs{z}) n(\bs{z}) dS(\bs{z}).
\end{align}
The random scattered field ${\cal U}^s $ for a sound-soft defect $D$ is the solution to
\begin{align} 
\left\{
\begin{array}{ll}
\Delta{\cal U}^s  + k^2 {\cal U}^s = 0 \quad \text{in $\R^d\setminus\overline{D}$}, \\[0.4em]
{\cal U}^{i} + {\cal U}^s  = 0 \quad \text{on $\partial D$}, \\[0.4em]
\text{${\cal U}^s$ is radiating}.
\end{array}
\right.
\end{align}
The total field is ${\cal U}={\cal U}^{i} + {\cal U}^s$ and it is of the form
\begin{align}
{\cal U}(\bs{x}) = \int_\Sigma u(\bs{x},\bs{z}) n(\bs{z}) dS(\bs{z}).
\end{align}
It is a random field with mean zero and covariance
\begin{align}\label{eq:covariance}
\left<\mathcal{U}(\bs{x}) \overline{\mathcal{U}(\bs{x}')}\right> = \int_\Sigma u(\bs{x},\bs{z})\overline{u(\bs{x}',\bs{z})} dS(\bs{z}). 
\end{align}
From the measurements ${\cal U}(\bs{x}_j)$ of the total field ${\cal U}(\bs{x})$ and the computed $\phi(\bs{x}_j,\bs{x}_m)$, we can compute the covariance matrix
\begin{align}
C_{jm} = 2ik\left<\mathcal{U}(\bs{x}_j) \overline{\mathcal{U}(\bs{x}_m)}\right> - \left[\phi(\bs{x}_j,\bs{x}_m) - \overline{\phi(\bs{x}_j,\bs{x}_m)}\right], \quad 1\leq j,m\leq J,
\end{align}
where the statistical average can be estimated by an empirical over repeated measurements with independent realizations of the source term $(n(\bs{z}))_{\bs{z}\in \Sigma}$. By \cref{eq:HK-scattered} we have that
\begin{align} 
C_{jm}  = N_{jm} - \overline{N_{jm}} = I_{jm},
\end{align}
where $I$ is the imaginary near-field matrix \cref{eq:imag-near-field-mat}.

Let us conclude this section with a few comments. The reason why we need to consider a volume $B$, as opposed to a surface $\partial B$, is because the scattered fields $u^s-\overline{u^s}$ in the right-hand side of \cref{eq:HK-MC} do not satisfy the radiation condition \cref{eq:Sommerfeld}. We will study the properties of the volume near-field operator in a volume $B$ in the next section (the near-field operator is the continuous analogue of the near-field matrix \cref{eq:near-field-mat}); the imaginary volume near-field operator will be studied in the following section (the continuous analogue of \cref{eq:imag-near-field-mat}).

\section{The volume near-field operator}\label{sec:vol-near-field}

Let $B\subset\R^d\setminus\overline{D}$ be a bounded domain and define
\begin{align}\label{eq:W(B)}
W(B) = \{g\in L^2(B)\,:\,\Delta g + k^2g = 0\;\text{in $B$}\},
\end{align}
which is a Hilbert space equipped with the $L^2(B)$-scalar product. We assume that $\partial D$ and $\partial B$ are smooth enough to allow the forming of Dirichlet and Neumann traces and the application of partial integration formulas (Lipschitz continuity is a sufficient condition). We start by introducing so-called volume potentials, which play the role of Herglotz wave functions in the standard LSM \cite[Def.~3.26]{colton2019}. 

\begin{definition}[Volume potential]\label{def:vol-pot} 
A volume potential is a function
\begin{align}
v(\bs{x}) = \int_{B} \phi(\bs{x},\bs{y})g(\bs{y})d\bs{y}, \quad \bs{x}\in\R^d,
\end{align}
for some $g\in W(B)$. It satisfies the inhomogeneous Helmholtz equation $\Delta v + k^2 v = -g\chi_{B}$ in $\R^d$, where $\chi_{B}$ denotes the characteristic function of the domain $B$.
\end{definition}

In the rest of the paper, we will denote by $u^s(\cdot,\bs{y})\in H^1_{\mrm{loc}}(\R^d\setminus D)$ the solution to the scattering problem \cref{eq:ext-Dirichlet} for the incident wave $\phi(\cdot,\bs{y})$. By linearity with respect to $\phi(\cdot,\bs{y})$, for a given kernel $g\in W(B)$, the solution $v^s\in H^1_{\mrm{loc}}(\R^d\setminus D)$ to the scattering problem
\begin{align}
\left\{
\begin{array}{ll}
\Delta v^s + k^2 v^s = 0 \quad \text{in $\R^d\setminus\overline{D}$}, \\[0.4em]
v^i + v^s = 0 \quad \text{on $\partial D$}, \\[0.4em]
\text{$v^s$ is radiating},
\end{array}
\right.
\end{align}
for the incident wave
\begin{align}
v^i(\bs{x}) = \int_{B} \phi(\bs{x},\bs{y})g(\bs{y})d\bs{y}, \quad \bs{x}\in\R^d,
\end{align}
is given by
\begin{align}
v^s(\bs{x}) = \int_{B} u^s(\bs{x},\bs{y})g(\bs{y})d\bs{y}, \quad \bs{x}\in\R^d\setminus\overline{D},
\end{align}
and has near-field pattern $v^s|_{B}$.

We are now ready to introduce near-field operators in a volume $B$, which we call volume near-field operators (near-field operators are usually defined on surfaces $\partial B$; see \cite[sect.~5]{audibert2017}). The following theorem mirrors that of the far-field operator $F$ \cite[Thm.~3.30]{colton2019}.

\begin{theorem}[Volume near-field operator]\label{thm:near-field-op} 
The volume near-field operator $N$,
\begin{align}
& N:W(B)\to W(B), \\
& (Ng)(\bs{x}) = \int_{B}u^s(\bs{x},\bs{y})g(\bs{y})d\bs{y}, \quad \bs{x}\in B, \nonumber
\end{align}
is injective and has dense range if $k^2$ is not a Dirichlet eigenvalue of $-\Delta$ in $D$.
\end{theorem}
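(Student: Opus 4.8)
The plan is to mirror the argument for the far-field operator in \cite[Thm.~3.30]{colton2019}: prove injectivity by hand, and obtain dense range by showing the adjoint $N^*$ is injective. The one genuinely new feature is that volume potentials, unlike Herglotz wave functions, are not entire solutions of the Helmholtz equation, so the final step of the injectivity proof requires an additional argument.

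For injectivity, I would suppose $Ng=0$ for some $g\in W(B)$ and let $v^i$, $v^s$ denote the volume potential with kernel $g$ and its associated scattered field. Then $v^s$ vanishes on the open set $B$. Since $v^s$ solves the homogeneous Helmholtz equation in the connected set $\R^d\setminus\overline{D}$ and is radiating, unique continuation (analyticity) forces $v^s\equiv 0$ in $\R^d\setminus\overline{D}$; the boundary condition $v^i+v^s=0$ on $\partial D$ then yields $v^i=0$ on $\partial D$. As $g$ is supported in $B$, away from $\overline{D}$, the potential $v^i$ solves the homogeneous Helmholtz equation in $D$, so the hypothesis that $k^2$ is not a Dirichlet eigenvalue of $-\Delta$ in $D$ forces $v^i=0$ in $D$.

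The remaining task---deducing $g=0$---is where the argument departs from the Herglotz case and is the main obstacle. The function $v^i$ solves the homogeneous Helmholtz equation on all of $\R^d\setminus\overline{B}$; since this set is connected and $v^i$ already vanishes on $D$, unique continuation propagates the vanishing out to $\partial B$, so $v^i$ and $\partial_\nu v^i$ vanish on $\partial B$ from the exterior. Because the volume potential of an $L^2$ density lies in $H^2_{\mrm{loc}}(\R^d)$, its Cauchy data are continuous across $\partial B$, hence vanish from inside as well. Now I would exploit that $g$ itself is a Helmholtz solution in $B$: from $\Delta v^i+k^2 v^i=-g$ in $B$ and Green's second identity applied to $\overline{v^i}$ and $g$ (whose boundary terms vanish by the zero Cauchy data), a short computation gives $\int_B|g|^2=k^2\int_B g\,\overline{v^i}-k^2\int_B g\,\overline{v^i}=0$, hence $g=0$. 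It is precisely the constraint $g\in W(B)$ that makes this cancellation work; for arbitrary $g\in L^2(B)$ the conclusion would fail.

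For the dense range, I would show $N^*$ is injective. A direct computation of the adjoint gives $(N^*h)(\bs{y})=\int_B\overline{u^s(\bs{x},\bs{y})}\,h(\bs{x})\,d\bs{x}$, and the reciprocity relation $u^s(\bs{x},\bs{y})=u^s(\bs{y},\bs{x})$ for the sound-soft scatterer lets me rewrite this as $N^*h=\overline{N\overline{h}}$. Since the Helmholtz equation has real coefficients, $W(B)$ is stable under complex conjugation, so $N^*h=0$ is equivalent to $N\overline{h}=0$ with $\overline{h}\in W(B)$; the injectivity just established then gives $\overline{h}=0$, so $N^*$ is injective and $N$ has dense range.
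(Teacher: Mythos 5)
Your proposal is correct and follows essentially the same route as the paper: injectivity via unique continuation to reduce to $v^i\in H^2_0(B)$, then pairing $\Delta v^i+k^2v^i=-g$ with $g$ and using $\Delta g+k^2g=0$ to make the boundary-free Green's identity kill the left-hand side; dense range via the identity $N^*g=\overline{N\overline{g}}$. The only differences are expository---you spell out the Green's-identity cancellation and make explicit the reciprocity relation and the conjugation-stability of $W(B)$, which the paper leaves implicit.
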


\begin{proof} 
We first show that $N$ is injective. Let $g\in W(B)$ and consider the scattered field
\begin{align}
v^s(\bs{x}) = \int_{B} u^s(\bs{x},\bs{y})g(\bs{y})d\bs{y}, \quad \bs{x}\in\R^d\setminus\overline{D}.
\end{align}
It is a radiating solution to the Helmholtz equation in $\R^d\setminus\overline{D}$ for the incident wave
\begin{align}
v^i(\bs{x}) = \int_{B} \phi(\bs{x},\bs{y})g(\bs{y})d\bs{y}, \quad \bs{x}\in\R^d,
\end{align}
with near-field pattern $v^s|_{B}=Ng$. Suppose that $Ng=0$. This leads to $v^s=0$ in $\R^d\setminus\overline{D}$  by the unique continuation principle. Moreover, the regularity of $v^s\in H^1_{\mrm{loc}}(\R^d\setminus D)$ at the boundary implies that $v^s=0$ on $\partial D$, and the boundary condition $v^i+v^s=0$ on $\partial D$ leads to $v^i=0$ on $\partial D$. Since $v^i$ is a solution to the Helmholtz equation in $D$, $v_i|_{\partial D}=0$ gives $v_i=0$ in~$D$ (since $k^2$ is not a Dirichlet eigenvalue of $-\Delta$ in $D$). Furthermore, since $v^i$ is a solution to the Helmholtz equation in $\R^d\setminus\overline{B}$, this means that $v^i=0$ in $\R^d\setminus\overline{B}$ (by the unique continuation principle). Besides, the regularity of $v^i\in H^2_{\mrm{loc}}(\R^d)$ yields $v^i=\partial v^i/\partial n=0$ on $\partial B$, i.e., $v^i\in H^2_0(B)$. Now, taking the $L^2(B)$-scalar product of
\begin{align}
\Delta v^i + k^2 v^i = -g \quad \text{in $B$}
\end{align}
with $g\in W(B)$, we obtain
\begin{align}
\int_{B} (\Delta v^i(\bs{x}) + k^2v^i(\bs{x}))\overline{g}(\bs{x})d\bs{x} = -\Vert g\Vert_{L^2(B)}^2.
\end{align}
Since $v^i\in H_0^2(B)$ and $\Delta g + k^2g=0$ in $B$, the left-hand side vanishes---and so does $g$.

To show that $N$ has dense range, we note that the dual operator $N^*$,
\begin{align}
& N^*:W(B)\to W(B), \\
& (N^*g)(\bs{x}) = \int_{B}\overline{u^s(\bs{x},\bs{y})}g(\bs{y})d\bs{y}, \quad \bs{x}\in B, \nonumber
\end{align}
is injective since $N^*g=\overline{N\overline{g}}$.
\end{proof}

The conjugate operator $\overline{N}$,
\begin{align}
& \overline{N}:W(B)\to W(B), \\
& (\overline{N}g)(\bs{x}) = \int_{B}\overline{u^s(\bs{x},\bs{y})}g(\bs{y})d\bs{y}, \quad \bs{x}\in B, \nonumber
\end{align}
coincides with the dual operator $N^*$ and indeed shares the same properties as $N$.

A key step in the analysis of the LSM is to factorize $N$. We shall prove in \cref{thm:factorization} that $N$ admits the factorization $N=-AV$, with $V$ and $A$ described in \cref{thm:vol-op} and \cref{thm:bnd-near-field-op} below. These operators are the analogues of the Herglotz operator $H$ and the boundary-to-far-field operator $A$ characterized in \cite[Cor.~5.32]{colton2019} and \cite[Cor.~5.33]{colton2019} for the far-field operator $F=-AH$. Their domain and range are illustrated in \cref{fig:operators}.

\begin{figure}
\centering
\begin{tikzpicture}[scale=1.0]

\def\R{2.5};
\def\s{0.55};
\def\t{7};
\def\x{-0.9238795325112867};
\def\y{0.38268343236508984};
\def\xx{-0.7071067811865475};
\def\yy{0.7071067811865476};
\def\z{0.7071067811865476};
\def\theta{10};

\draw[black, thick, fill=black!10, xshift=5/5*\R cm] plot[smooth cycle] coordinates {(-2*\R/\t,-1.5*\R/\t) (-2*\R/\t,1.5*\R/\t) (2*\R/\t,\R/\t) (2*\R/\t,-\R/\t)};
\node[anchor=north] at (5/5*\R,0.5*\R/\t) {$D$};
\node[anchor=west] at (2*\R/\t+5.25/5*\R,0.8*\R/\t) {$\partial D$};

\draw[black, thick, fill=black!10] plot[smooth cycle] coordinates {(-2.5*\R/\t,5*\R/\t) (-5*\R/\t,0*\R/\t) (-2.5*\R/\t,-5*\R/\t) (-2*\R/\t,-\R/\t) (-2*\R/\t,\R/\t)};
\node[anchor=north] at (-3.5*\R/\t,0.5*\R/\t) {$B$};

\node (A) at (-3*\R/\t,2*\R/\t) {};
\node (B) at (2*\R/\t+4/5*\R,1.05*\R/\t) {};
\draw[thick, black, -{Stealth[bend]}] (A) to [bend left=60] (B);
\node[anchor=south] at (4.5*\R/\t,5*\R/\t) {$V:W(B)\to H^{1/2}(\partial D)$};
\node (C) at (2*\R/\t+4/5*\R,-1.05*\R/\t) {};
\node (D) at (-3*\R/\t,-2*\R/\t) {};
\draw[thick, black, -{Stealth[bend]}] (C) to [bend left=60] (D);
\node[anchor=north] at (4.5*\R/\t,-5.25*\R/\t) {$A:H^{1/2}(\partial D)\to W(B)$};
\node (E) at (-7.5*\R/\t,0) {};
\draw[thick, black, -{Stealth[bend]}] (A) to [bend right=60] (E) to [bend right=60] (D);
\node (F) at (-5.5*\R/\t,-3.5*\R/\t) {};
\node[anchor=east] at (F) {$N:W(B)\to W(B)$};

\end{tikzpicture}
\caption{The near-field operator $N$ is related to the operators $V$ and $A$ via the factorization $N=-AV$. The operator $V$ corresponds to a superposition of point sources located in $B$ and evaluated on $\partial D$, while the operator $A$ maps the boundary values on $\partial D$ of radiating solutions to the Helmholtz equation onto near-field measurements in $B$.}
\label{fig:operators}
\end{figure}
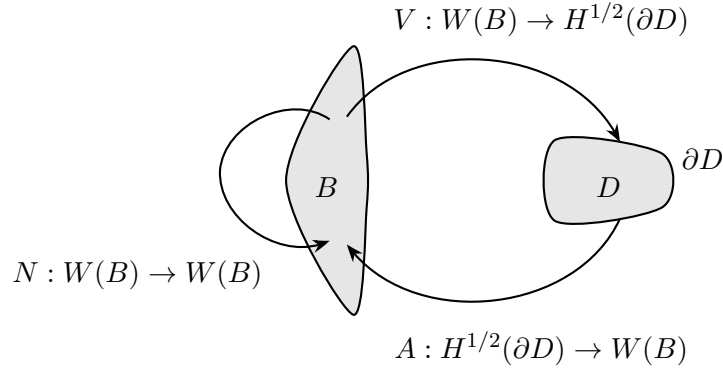

\begin{theorem}[Volume operator]\label{thm:vol-op}
The volume operator $V$,
\begin{align}
& V:W(B)\to H^{1/2}(\partial D), \\
& (Vg)(\bs{x}) = \int_{B}\phi(\bs{x},\bs{y})g(\bs{y})d\bs{y}, \quad \bs{x}\in\partial D, \nonumber
\end{align}
is injective and has dense range if $k^2$ is not a Dirichlet eigenvalue of $-\Delta$ in $D$. 
\end{theorem}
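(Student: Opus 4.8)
The plan is to prove the two assertions separately, and to notice at the outset that the injectivity of $V$ is already contained in the proof of \cref{thm:near-field-op}. Writing $v^i(\bs{x}) = \int_B \phi(\bs{x},\bs{y})g(\bs{y})\,d\bs{y}$ for the volume potential with kernel $g\in W(B)$, the condition $Vg=0$ is exactly $v^i=0$ on $\partial D$. From here the argument of \cref{thm:near-field-op} applies verbatim: since $v^i$ solves the Helmholtz equation in $D$ and $k^2$ is not a Dirichlet eigenvalue of $-\Delta$ in $D$, one gets $v^i=0$ in $D$; unique continuation in the connected set $\R^d\setminus\overline{B}$, on which $v^i$ is a Helmholtz solution vanishing on the open subset $D$, gives $v^i=0$ in $\R^d\setminus\overline{B}$; the $H^2_{\mrm{loc}}$-regularity of the volume potential then yields $v^i\in H^2_0(B)$; and taking the $L^2(B)$-scalar product of $\Delta v^i+k^2v^i=-g$ with $g$ forces $\Vert g\Vert_{L^2(B)}^2=0$. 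I would therefore simply invoke that computation.

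For the dense range, I would show that the annihilator of the range is trivial, equivalently that the adjoint $V^*$ is injective. A continuous functional on $H^{1/2}(\partial D)$ is represented by some $\psi\in H^{-1/2}(\partial D)$, and by Fubini together with the symmetry $\phi(\bs{x},\bs{y})=\phi(\bs{y},\bs{x})$, the condition that $\psi$ pairs to zero with $Vg$ for every $g\in W(B)$ reduces to a genuine sesquilinear $L^2(B)$-orthogonality relation: the conjugated single-layer potential
\begin{align}
w(\bs{y}) = \int_{\partial D}\overline{\phi(\bs{x},\bs{y})}\,\psi(\bs{x})\,dS(\bs{x}), \quad \bs{y}\in\R^d,
\end{align}
is $L^2(B)$-orthogonal to all of $W(B)$. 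Since $w$ solves the conjugate Helmholtz equation in $B$ and hence already lies in $W(B)$, this means $w=0$ in $B$.

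I would then run the classical single-layer uniqueness argument. The function $w$ solves the Helmholtz equation in $\R^d\setminus\partial D$; as it vanishes on the open set $B$ and $\R^d\setminus\overline{D}$ is connected, unique continuation forces $w=0$ in all of $\R^d\setminus\overline{D}$, so the exterior traces $w^+$ and $\partial_n w^+$ vanish on $\partial D$. Because $\overline{\phi}$ has the same real singular part as $\phi$, the potential $w$ inherits the standard jump relations: it is continuous across $\partial D$, and the jump of its normal derivative equals $\pm\psi$. Continuity gives $w^-=0$ on $\partial D$, so $w$ solves the interior Helmholtz equation with vanishing Dirichlet data; the eigenvalue hypothesis yields $w=0$ in $D$, whence $\partial_n w^-=0$ on $\partial D$ as well. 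The normal-derivative jump relation then gives $\psi=0$, so $V^*$ is injective and $V$ has dense range.

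The routine part is the injectivity, which piggybacks on \cref{thm:near-field-op}. The delicate step is the dense-range argument, and specifically the bookkeeping of the duality: identifying the adjoint of $V$ with the conjugated single-layer potential requires Fubini on the $H^{1/2}$--$H^{-1/2}$ pairing, justified by density of smooth densities, together with the observation that $\overline{\phi}$ is the Green's function of the conjugate Helmholtz operator with an unchanged, real singularity, so that the familiar continuity and normal-derivative jump relations still hold. One should also confirm that $w$ genuinely belongs to $W(B)$, so that $L^2(B)$-orthogonality to $W(B)$ really forces $w|_B=0$, and that the interior trace $w^-$ is well defined; both follow from the regularity of single-layer potentials. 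Notably, no exterior radiation condition is needed here, since the vanishing of $w$ in the exterior is obtained by unique continuation from $B$ rather than by exterior uniqueness.
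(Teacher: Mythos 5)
Your proof is correct and follows essentially the same route as the paper: injectivity by reduction to the argument of \cref{thm:near-field-op}, and dense range by showing the adjoint, realized as the conjugate single-layer potential, is injective via unique continuation from $B$, the non-eigenvalue hypothesis in $D$, and the jump relation for the normal derivative. Your extra remarks---that the potential itself lies in $W(B)$ so that $L^2(B)$-orthogonality forces it to vanish there, and that no radiation condition is needed---are correct refinements of points the paper leaves implicit.
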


\begin{proof}
We start with the injectivity. Let $g\in W(B)$ and consider the volume potential
\begin{align}
v^i(\bs{x}) = \int_{B}\phi(\bs{x},\bs{y})g(\bs{y})d\bs{y}, \quad \bs{x}\in\R^d.
\end{align}
It is a solution to the Helmholtz equation in $D$ with boundary data $v^i|_{\partial D}=Vg$. Suppose that $Vg=0$. This implies that $v^i=0$ in $D$ since $k^2$ is not a Dirichlet eigenvalue of $-\Delta$ in~$D$. We conclude that $g=0$ with the same arguments as those used to prove \cref{thm:near-field-op}.

We now show that $V$ has dense range by showing that the dual operator $V^*$,
\begin{align}
& V^*:H^{-1/2}(\partial D)\to W(B), \\
& (V^*\varphi)(\bs{x}) = \int_{\partial D}\overline{\phi(\bs{x},\bs{y})}\varphi(\bs{y})d\bs{y}, \quad \bs{x}\in B, \nonumber
\end{align}
is injective. Let $\varphi\in H^{-1/2}(\partial D)$ and consider the conjugate single-layer potential,
\begin{align}
v^i(\bs{y}) = \int_{\partial D}\overline{\phi(\bs{x},\bs{y})}\varphi(\bs{x})d\bs{x}, \quad \bs{y}\in\R^d.
\end{align}
It is a solution to the Helmholtz equation in both $\R^d\setminus\overline{D}$ and $D$, which satisfies the absorption condition,\footnote{The absorption condition is the conjugate condition of the Sommerfeld radiation condition \cref{eq:Sommerfeld}, and reads $\lim_{r\to\infty} r^{\frac{d-1}{2}}\left(\frac{\partial w}{\partial r} + ikw\right) = 0$ for $r = \vert\bs{x}\vert$ (uniformly in $\bs{x}/\vert\bs{x}\vert$).} and with near-field pattern $v^i|_{B}=V^*\varphi$. Suppose that $V^*\varphi=0$. By the unique continuation principle, one has $v^i=0$ in $\R^d\setminus\overline{D}$, and the regularity of $v^i\in H^1_{\mrm{loc}}(\R^d\setminus D)$ yields $v^i=0$ on $\partial D$, and hence $v^i=0$ in all of $D$ (since $k^2$ is not a Dirichlet eigenvalue of $-\Delta$ in $D$). Therefore, we have that $v^i=0$ in $\R^d$, and $\varphi=0$ follows from the jump relations of the conormal derivative of the single-layer potential \cite[Eqn.~(3.2)]{colton2019}.
\end{proof}

Again, it follows immediately that the conjugate operator $\overline{V}$,
\begin{align}
& \overline{V}:W(B)\to H^{1/2}(\partial D), \\
& (\overline{V}g)(\bs{x}) = \int_{B}\overline{\phi(\bs{x},\bs{y})}g(\bs{y})d\bs{y}, \quad \bs{x}\in\partial D, \nonumber
\end{align}
shares the same properties as $V$.

We now introduce the operator $A$, which maps the boundary values on $\partial D$ of radiating solutions onto the near-field measurements in $B$; see \cref{fig:operators}.

\begin{theorem}[Boundary-values-to-near-field operator]\label{thm:bnd-near-field-op}
Let $A:H^{1/2}(\partial D)\to W(B)$ be the operator that maps the boundary values $f=w|_{\partial D}$ of radiating solutions $w\in H^1_{\mrm{loc}}(\R^d\setminus D)$ to the Helmholtz equation onto the near-field pattern $Af=w|_{B}$. It is bounded, injective, and has dense range. 
\end{theorem}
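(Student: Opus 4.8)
The plan is to establish the three properties in turn: well-posedness together with boundedness, then injectivity, and finally dense range, which I expect to be the crux.

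First, for $A$ to be well-defined I would invoke the well-posedness of the exterior Dirichlet problem for the Helmholtz equation: for every $f\in H^{1/2}(\partial D)$ there is a unique radiating $w\in H^1_{\mrm{loc}}(\R^d\setminus D)$ with $w|_{\partial D}=f$, and the solution map $f\mapsto w$ is bounded into $H^1$ on every bounded subset of the exterior kept at positive distance from $\partial D$. Since $\overline B\subset\R^d\setminus\overline D$ is bounded, $w|_B$ indeed lies in $W(B)$ (it is in $L^2(B)$ and solves $\Delta w+k^2w=0$ in $B$), and the restriction estimate $\Vert Af\Vert_{L^2(B)}=\Vert w\Vert_{L^2(B)}\le\Vert w\Vert_{H^1(B)}\le C\Vert f\Vert_{H^{1/2}(\partial D)}$ yields boundedness. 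For injectivity, suppose $Af=0$, i.e. $w|_B=0$; since $w$ solves the Helmholtz equation in the connected open set $\R^d\setminus\overline D$ and vanishes on the open subset $B$, the unique continuation principle forces $w\equiv0$ in $\R^d\setminus\overline D$, and taking the Dirichlet trace gives $f=w|_{\partial D}=0$. This is exactly the mechanism used in \cref{thm:near-field-op}.

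The dense range is the main obstacle. Rather than compute the adjoint of $A$ directly --- which is awkward, because pairing a radiating $w$ against the conjugate of another radiating field produces a non-vanishing surface term at infinity of the Helmholtz--Kirchhoff type \cref{eq:HK} --- I would argue in the Hilbert space $W(B)$ by showing the orthogonal complement of the range is trivial. The key observation is that for every source point $\bs{z}\in D$ the fundamental solution $\phi(\cdot,\bs{z})$ is itself a radiating solution of the Helmholtz equation in $\R^d\setminus\overline D$, so by uniqueness it is precisely the field generated by its own trace; hence $\phi(\cdot,\bs{z})|_B\in\mrm{range}(A)$. Consequently, if $g\in W(B)$ is orthogonal to $\mrm{range}(A)$, then the volume potential of $\overline g\in W(B)$,
\begin{align}
h(\bs{z})=\int_B\phi(\bs{z},\bs{x})\,\overline{g(\bs{x})}\,d\bs{x}=\bigl(\phi(\cdot,\bs{z})|_B,\,g\bigr)_{L^2(B)},\qquad\bs{z}\in\R^d,\nonumber
\end{align}
vanishes on $D$.

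To finish, I would note that $h$ is a radiating solution of the Helmholtz equation in $\R^d\setminus\overline B$ (the density $-\overline g\,\chi_B$ is supported in $\overline B$) which vanishes on the open set $D$, so unique continuation gives $h=0$ throughout $\R^d\setminus\overline B$; the $H^2_{\mrm{loc}}$-regularity of the volume potential then forces $h=\partial h/\partial n=0$ on $\partial B$, i.e. $h\in H^2_0(B)$. Pairing the interior identity $\Delta h+k^2h=-\overline g$ against $g$ and integrating by parts (the boundary terms drop since $h\in H^2_0(B)$, and $\Delta g+k^2g=0$ in $B$),
\begin{align}
-\Vert g\Vert_{L^2(B)}^2=\int_B(\Delta h+k^2h)\,g\,d\bs{x}=\int_B h\,(\Delta g+k^2g)\,d\bs{x}=0,\nonumber
\end{align}
so $g=0$ and the range is dense. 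This is exactly the argument structure already used for \cref{thm:near-field-op} and \cref{thm:vol-op}. Two configurational points must be checked for the dense-range step to go through: that $\R^d\setminus\overline B$ is connected (so the continuation of $h$ reaches all of $\partial B$) and that $\overline B\cap\overline D=\emptyset$ (so $D$ lies in the region where $h$ is continued, and so the boundedness restriction lands in a region of full elliptic regularity); both hold under the standing assumptions on the geometry.
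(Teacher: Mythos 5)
Your proof is correct; the boundedness and injectivity steps coincide with the paper's, but your dense-range argument takes a genuinely different route. The paper represents $A$ as an integral operator with kernel $\partial u(\bs{x},\bs{y})/\partial n(\bs{y})$ via Green's formula, tests orthogonality against \emph{all} $f\in H^{1/2}(\partial D)$ so as to obtain the full Cauchy data $v=\partial v/\partial n=0$ on $\partial D$ for the total field $v$ associated with a volume potential, and then invokes Holmgren's theorem to annihilate $v$ outside $\overline{D}\cup\overline{B}$ before the concluding $H^2_0(B)$ integration by parts. You instead test only against the boundary traces $\phi|_{\partial D}(\cdot,\bs{z})$ for $\bs{z}\in D$ --- precisely the easy direction of \cref{lem:range} --- so that orthogonality of $g$ to $\mrm{range}(A)$ forces the volume potential of $\overline{g}$ to vanish on the open set $D$, and unique continuation in $\R^d\setminus\overline{B}$ replaces the Green's-representation/Holmgren machinery; the endgame (vanishing Cauchy data on $\partial B$, hence membership in $H^2_0(B)$, hence $\Vert g\Vert_{L^2(B)}=0$) is identical to the paper's and to the injectivity proofs of \cref{thm:near-field-op} and \cref{thm:vol-op}. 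Your route is somewhat more economical, avoiding the integral representation of $A$ and reusing a fact the paper proves anyway, at the cost of relying on the connectedness of $\R^d\setminus\overline{B}$, which you rightly flag and which both proofs implicitly assume. One small refinement: the condition your argument actually needs is $D\cap\overline{B}=\emptyset$, which already follows from the standing assumption $B\subset\R^d\setminus\overline{D}$ together with the openness of $D$, so the stronger requirement $\overline{B}\cap\overline{D}=\emptyset$ that you list need not be imposed separately.
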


\begin{proof}
It is bounded because the exterior Helmholtz Dirichlet problem is well-posed. To prove that it is injective, let $f\in H^{1/2}(\partial D)$ and suppose that $Af=0$. This implies that the radiating solution $w$ to the Helmholtz equation with near-field pattern $w|_{B}=Af$ vanishes in $\R^d\setminus\overline{D}$ by the unique continuation principle. The regularity of $w\in H^1_{\mrm{loc}}(\R^d\setminus D)$ leads to $w|_{\partial D}=f=0.$

To show that $A$ has dense range, we write it as an integral operator using Green's formula \cite[Eqn.~(2.9)]{colton2019}, Green's second theorem \cite[Eqn.~(2.3)]{colton2019}, and the radiation condition, 
\begin{align}
(Af)(\bs{x}) = \int_{\partial D}\frac{\partial u(\bs{x},\bs{y})}{\partial n(\bs{y})}f(\bs{y})dS(\bs{y}), \quad \bs{x}\in B,
\end{align}
where $u(\cdot,\bs{y})$ denotes the total field associated with the Helmholtz scattering problem \cref{eq:ext-Dirichlet} for the incident wave $\phi(\cdot,\bs{y})$. Let $g\in W(B)$ and suppose that
\begin{align}
\int_{B}(Af)(\bs{x})g(\bs{x})d\bs{x} = 0, \quad \forall f\in H^{1/2}(\partial D).
\end{align}
Interchanging the orders of integration gives
\begin{align}\label{eq:density-A}
\int_{\partial D}\int_{B}\frac{\partial u(\bs{x},\bs{y})}{\partial n(\bs{y})}g(\bs{x})d\bs{x}f(\bs{y})dS(\bs{y}) = 0, \quad \forall f\in H^{1/2}(\partial D).
\end{align}
Consider the volume potential
\begin{align}
v^i(\bs{y}) = \int_{B} \phi(\bs{x},\bs{y})g(\bs{x})d\bs{x}, \quad \bs{y}\in\R^d,
\end{align}
whose corresponding scattered and total fields are
\begin{align}
& v^s(\bs{y}) = \int_{B} u^s(\bs{x},\bs{y})g(\bs{x})d\bs{x}, \quad \bs{y}\in\R^d\setminus\overline{D}, \\
& v(\bs{y}) = \int_{B} u(\bs{x},\bs{y})g(\bs{x})d\bs{x}, \quad \bs{y}\in\R^d\setminus\overline{D}.
\end{align}
We rewrite the density relation \cref{eq:density-A} as
\begin{align}
\int_{\partial D}\frac{\partial v(\bs{x},\bs{y})}{\partial n(\bs{y})}f(\bs{y})dS(\bs{y}) = 0, \quad \forall f\in H^{1/2}(\partial D).
\end{align}
This implies that $\partial v/\partial n=0$ on $\partial D$, and since $v=0$ on $\partial D$, one has $v=0$ in $\R^d\setminus\{\overline{D}\cup\overline{B}\}$ via Holmgren's theorem. Since $v$ is as smooth as $v^i\in H^2_{\mrm{loc}}(\R^d)$, one has $v=\partial v/\partial n=0$ on the boundary $\partial B$. Moreover, because $v^s$ satisfies the Helmholtz equation in $B$, we have
\begin{align}
\Delta v + k^2 v = \Delta v^i + k^2 v^i = -g \quad \text{in $B$}.
\end{align}
We conclude the proof by taking the $L^2(B)$-scalar product with $g\in W(B)$.
\end{proof}

The operator $\overline{A}:H^{1/2}(\partial D)\to W(B)$, which maps the boundary values $g=w|_{\partial D}$ of absorbing solutions $w\in H^1_{\mrm{loc}}(\R^d\setminus D)$ of the Helmholtz equation onto the near-field pattern $\overline{A}g=w|_{B}$, shares the same properties as $A$.

\begin{theorem}[Factorization]\label{thm:factorization}
The near-field operator $N$ may be factored as $N=-AV$. Similarly, the operators $\overline{N}$, $\overline{V}$, and $\overline{A}$ are related via the factorization $\overline{N}=-\overline{A}\,\overline{V}$.
\end{theorem}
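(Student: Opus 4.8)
The plan is to prove $N=-AV$ not by any estimate but by tracing a single kernel $g\in W(B)$ through the three operators and matching the outputs against their definitions. First I would fix $g\in W(B)$ and form the associated volume potential (incident field) together with its radiating scattered field, exactly as introduced just before \cref{thm:near-field-op}:
\begin{align}
v^i(\bs{x}) = \int_{B}\phi(\bs{x},\bs{y})g(\bs{y})d\bs{y}, \quad v^s(\bs{x}) = \int_{B}u^s(\bs{x},\bs{y})g(\bs{y})d\bs{y}.
\end{align}
By the very definition of $V$, the Dirichlet trace of $v^i$ on $\partial D$ is $v^i|_{\partial D}=Vg\in H^{1/2}(\partial D)$.

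Next I would read off the two identifications that make the composition collapse. The boundary condition $v^i+v^s=0$ on $\partial D$ gives
\begin{align}
v^s|_{\partial D} = -\,v^i|_{\partial D} = -Vg,
\end{align}
so the trace of the radiating field $v^s$ on $\partial D$ is precisely $-Vg$. On the other hand, comparing the integral formula for $v^s$ with the definition of $N$ shows that the near-field pattern of $v^s$ is $v^s|_{B}=Ng$.

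Finally, since $v^s\in H^1_{\mrm{loc}}(\R^d\setminus D)$ is a radiating solution of the Helmholtz equation whose trace on $\partial D$ equals $-Vg$, \cref{thm:bnd-near-field-op} applies with $f=-Vg$ and yields $A(-Vg)=v^s|_{B}$. Combining this with $v^s|_{B}=Ng$ gives $Ng=-AVg$ for every $g\in W(B)$, i.e.\ $N=-AV$. The conjugate factorization $\overline{N}=-\overline{A}\,\overline{V}$ follows verbatim after replacing $\phi$ by $\overline{\phi}$ and $u^s$ by $\overline{u^s}$: the conjugate volume potential has trace $\overline{V}g$ on $\partial D$, the conjugate scattered field is absorbing with trace $-\overline{V}g$, and $\overline{A}$ sends that trace to its near-field pattern $\overline{N}g$.

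I do not expect a genuine obstacle here; the only points needing care are the well-definedness supplied by the preceding theorems — namely that $Vg$ indeed lands in $H^{1/2}(\partial D)$ so that $A$ can act on it, and that the trace of $v^s$ is taken in the same $H^{1/2}(\partial D)$ sense used to define $A$. Both are guaranteed by the regularity $v^s\in H^1_{\mrm{loc}}(\R^d\setminus D)$ and the well-posedness of the exterior Dirichlet problem invoked in \cref{thm:bnd-near-field-op}, so the argument is essentially bookkeeping among the definitions.
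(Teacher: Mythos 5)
Your proof is correct and follows exactly the same route as the paper's: identify $v^i|_{\partial D}=Vg$, use the boundary condition to get $v^s|_{\partial D}=-Vg$, and apply the definition of $A$ to conclude $Ng=v^s|_B=A(-Vg)$, with the conjugate case handled verbatim. The paper's version is simply a more condensed statement of the same bookkeeping.
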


\begin{proof}
Since $v^s|_B=Ng$ represents the near-field pattern of the scattered field corresponding to the incident wave $v^i|_{\partial D}=Vg$, we clearly have that $Ng=A(-Vg)$ using $v^s|_{\partial D}=-v^i|_{\partial D}$. The same proof holds for the conjugate operators.
\end{proof}

Another key ingredient in the analysis of the LSM is the characterization of the range of the operator $A$. We show in the following lemma that the near-field measurements of a point source belong to the range of $A$ if and only if that point source is located inside $D$. Once again, this lemma mirrors that of far-field measurements \cite[Lem.~5.34]{colton2019}.

\begin{lemma}[Range]\label{lem:range}
$\phi|_{B}(\cdot,\bs{z})\in\mrm{range}(A)$ if and only if $\bs{z}\in D$.
\end{lemma}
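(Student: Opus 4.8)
The plan is to prove the two implications separately: the implication $\bs{z}\in D\Rightarrow\phi|_{B}(\cdot,\bs{z})\in\mrm{range}(A)$ by an explicit construction, and its converse by a unique-continuation argument that plays the regularity of an admissible solution against the singularity of the Green's function. For the forward (easy) direction, I would observe that when $\bs{z}\in D$ the map $\bs{x}\mapsto\phi(\bs{x},\bs{z})$ has its only singularity inside $D$, hence is free of singularities on $\R^d\setminus\overline{D}$, where it is a radiating solution of the Helmholtz equation. Its Dirichlet trace $f:=\phi(\cdot,\bs{z})|_{\partial D}\in H^{1/2}(\partial D)$ is therefore admissible data for $A$, and by the very definition of $A$ one gets $Af=\phi(\cdot,\bs{z})|_{B}=\phi|_{B}(\cdot,\bs{z})$. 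Note that since $B\subset\R^d\setminus\overline{D}$ and $\bs{z}\in D$, the point $\bs{z}$ lies outside $\overline{B}$, so $\phi|_{B}(\cdot,\bs{z})\in W(B)$ and the statement is meaningful.

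For the converse I would argue by contraposition: assume $\bs{z}\notin D$ and suppose, for contradiction, that $\phi|_{B}(\cdot,\bs{z})=Af$ for some $f\in H^{1/2}(\partial D)$. Let $w\in H^1_{\mrm{loc}}(\R^d\setminus D)$ be the radiating solution with $w|_{\partial D}=f$, so that $w|_{B}=Af=\phi|_{B}(\cdot,\bs{z})$. Both $w$ and $\phi(\cdot,\bs{z})$ then solve the Helmholtz equation on the open set $\Omega:=(\R^d\setminus\overline{D})\setminus\{\bs{z}\}$ and coincide on the open subset $B$ (or $B\setminus\{\bs{z}\}$). Because the complement of $D$ is connected and $d\geq 2$, deleting the single point $\bs{z}$ leaves $\Omega$ connected, so the unique continuation principle forces $w=\phi(\cdot,\bs{z})$ throughout $\Omega$. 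This matching identity is the crux of the proof.

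The contradiction is then extracted by distinguishing the two possible positions of $\bs{z}$, under the unifying slogan that $\phi(\cdot,\bs{z})$ is too singular at $\bs{z}$ to be the restriction of an admissible radiating solution that is regular across $\bs{z}$. If $\bs{z}\in\R^d\setminus\overline{D}$, then $w$ is real-analytic in a neighbourhood of $\bs{z}$ since it solves the Helmholtz equation there, whereas $\phi(\bs{x},\bs{z})\to\infty$ as $\bs{x}\to\bs{z}$, which is impossible. If $\bs{z}\in\partial D$, then $w=\phi(\cdot,\bs{z})$ near $\bs{z}$ contradicts $w\in H^1_{\mrm{loc}}(\R^d\setminus D)$, because $\phi(\cdot,\bs{z})$ fails to be $H^1$ in any exterior neighbourhood of a boundary singularity (its gradient is not square integrable there in either $d=2$ or $d=3$). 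The hard part will be precisely this last bookkeeping: one must verify that $\Omega$ remains connected after removing $\bs{z}$, that $B$ minus that point is a legitimate open set on which the two solutions agree so unique continuation genuinely applies, and—most delicately—that the forced identity truly violates the regularity or boundedness that every valid $w$ must enjoy across $\bs{z}$.
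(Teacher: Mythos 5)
Your proof is correct and follows essentially the same route as the paper's: the explicit identity $\phi|_{B}(\cdot,\bs{z})=A\bigl(\phi|_{\partial D}(\cdot,\bs{z})\bigr)$ for $\bs{z}\in D$, and for $\bs{z}\notin D$ a unique-continuation argument in $(\R^d\setminus\overline{D})\setminus\{\bs{z}\}$ forcing the admissible solution to coincide with $\phi(\cdot,\bs{z})$, contradicted by the singularity at $\bs{z}$. The only (immaterial) difference is in the case $\bs{z}\in\partial D$, where you derive the contradiction from $w\notin H^1$ on the exterior side, while the paper reads it off the trace $f=\phi|_{\partial D}(\cdot,\bs{z})\notin H^{1/2}(\partial D)$ via $\phi(\cdot,\bs{z})\notin H^1(D)$.
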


\begin{proof}
If $\bs{z}\in D$, then $\phi|_{\partial D}(\cdot,\bs{z})\in H^{1/2}(\partial D)$ and $\phi|_{B}(\cdot,\bs{z})=A\phi|_{\partial D}(\cdot,\bs{z})$. If $\bs{z}\notin D$, assume that there exists $f\in H^{1/2}(\partial D)$ such that $Af=\phi|_{B}(\cdot,\bs{z})$. Therefore, by the unique continuation principle in $(\R^d\setminus\overline{D})\setminus\{\bs{z}\}$, the solution $u\in H^1_\mrm{loc}(\R^d\setminus D)$ to the exterior Dirichlet problem with $u|_{\partial D}=f$ must coincide with $\phi(\cdot,\bs{z})$ in $(\R^d\setminus\overline{D})\setminus\{\bs{z}\}$. If $\bs{z}\in\R^d\setminus\overline{D}$, this contradicts the regularity of $u$. If $\bs{z}\in\partial D$, from the boundary condition one has that $\phi|_{\partial D}(\cdot,\bs{z})=f\in H^{1/2}(\partial D)$, which is a contradiction to $\phi(\cdot,\bs{z})\notin H^1(D)$ when $\bs{z}\in\partial D$.
\end{proof}

\section{The imaginary near-field operator}\label{sec:imag-near-field}

In the previous section, we explored the properties of the near-field operator $N$ in a volume $B$, as well as those of its factors $V$ and $A$. The various results we proved, in particular \cref{lem:range}, will be essential to justify the use of the LSM for the imaginary near-field operator $I$, which we introduce next.

\begin{theorem}[Imaginary near-field operator]\label{thm:im-near-field-op} The imaginary near-field operator $I$,
\begin{align}
& I:W(B)\to W(B), \\
& (Ig)(\bs{x}) = \int_{B}\left[u^s(\bs{x},\bs{y}) - \overline{u^s(\bs{x},\bs{y})}\right]g(\bs{y})d\bs{y}, \quad \bs{x}\in B, \nonumber
\end{align}
is injective and has dense range if $k^2$ is not a Dirichlet eigenvalue of $-\Delta$ in $D$.
\end{theorem}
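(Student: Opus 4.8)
The plan is to reduce everything to the injectivity of $N$ proved in \cref{thm:near-field-op}, exploiting the decomposition $I=N-\overline{N}$. As observed just after \cref{thm:near-field-op}, the conjugate operator $\overline{N}$ coincides with the adjoint $N^*$ of $N$ on $W(B)$; hence $I^*=N^*-N=-I$, so $I$ is bounded and skew-adjoint. For a bounded operator one has $\overline{\mrm{range}(I)}=\ker(I^*)^{\perp}=\ker(I)^{\perp}$, so dense range is \emph{equivalent} to injectivity. Thus it suffices to show that $I$ is injective, after which dense range is automatic (equivalently, one checks injectivity of $I^*=-I$, exactly as the dense-range steps in \cref{thm:near-field-op} and \cref{thm:vol-op} verify injectivity of a dual operator).

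To prove injectivity, I would take $g\in W(B)$ with $Ig=0$ and attach to it the radiating field $v^s(\bs{x})=\int_B u^s(\bs{x},\bs{y})g(\bs{y})\,d\bs{y}$ and its conjugate counterpart $w(\bs{x})=\int_B\overline{u^s(\bs{x},\bs{y})}\,g(\bs{y})\,d\bs{y}$, both defined for $\bs{x}\in\R^d\setminus\overline{D}$. The first has near-field pattern $v^s|_B=Ng$ and satisfies the radiation condition \cref{eq:Sommerfeld}; the second has near-field pattern $w|_B=\overline{N}g$ and, being a superposition of conjugated outgoing fields, solves the Helmholtz equation while satisfying the absorption condition. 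The hypothesis $Ig=0$ says precisely that $v^s=w$ on the open set $B$. Since both solve the Helmholtz equation in the connected exterior $\R^d\setminus\overline{D}$ and agree on $B$, the unique continuation principle gives $v^s=w$ throughout $\R^d\setminus\overline{D}$.

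Denote this common field by $\psi$. It then satisfies the radiation condition (from $v^s$) and the absorption condition (from $w$) simultaneously; subtracting the two conditions yields $\lim_{r\to\infty}r^{(d-1)/2}\psi=0$, whence $\int_{|\bs{x}|=R}|\psi|^2\,dS\to0$ and Rellich's lemma forces $\psi\equiv0$ in $\R^d\setminus\overline{D}$. In particular $Ng=v^s|_B=0$, and the injectivity of $N$ from \cref{thm:near-field-op} gives $g=0$. The main obstacle, and the only genuinely analytic step, is this last passage: one must confirm that $w$ is a bona fide absorbing solution so that $\psi$ inherits both asymptotic conditions, and then invoke Rellich's lemma to conclude that a field which is at once radiating and absorbing must vanish. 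The remaining bookkeeping---identifying $\overline{N}$ with $N^*$ and reducing dense range to injectivity---is routine.
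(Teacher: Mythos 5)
Your proof is correct and follows essentially the same route as the paper's: the same splitting of $Ig$ into a radiating field $v^s$ and an absorbing field $w$, unique continuation to identify them on $\R^d\setminus\overline{D}$, the observation that a simultaneously radiating and absorbing solution must vanish (you make explicit the Rellich-lemma step the paper leaves implicit), and the reduction of $Ng=0$ to $g=0$ via the injectivity of $N$. The dense-range argument via $I^*=-I$ is likewise identical to the paper's, so no changes are needed.
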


\begin{proof}
We first show that $I$ is injective. Let us define the scattered fields
\begin{align}\label{eq:v^s-w^s}
& v^s(\bs{x}) = \int_{B} u^s(\bs{x},\bs{y})g(\bs{y})d\bs{y}, \quad \bs{x}\in\R^d\setminus\overline{D}, \\
& w^s(\bs{x}) = \int_{B} \overline{u^s(\bs{x},\bs{y})}g(\bs{y})d\bs{y}, \quad \bs{x}\in\R^d\setminus\overline{D}.
\end{align}
Note that $v^s$ is a radiating solution to the Helmholtz equation in $\R^d\setminus\overline{D}$ for the incident wave $v^i|_{\partial D}=Vg$ with near-field pattern $v^s|_{B}=Ng$, while $w^s$ is an absorbing solution to the Helmholtz equation in $\R^d\setminus\overline{D}$ for the incident wave $w^i|_{\partial D}=\overline{V}g$ with conjugate near-field pattern $w^s|_{B}=\overline{N}g$. Suppose that $Ig=0$, which yields $v^s|_{B}=w^s|_{B}$. Since $v^s$ and $w^s$ are both solutions to the Helmholtz equation in $\R^d\setminus\overline{D}$, this implies $v^s=w^s$ in $\R^d\setminus\overline{D}$ by the unique continuation principle. Finally, $v^s$ is radiating while $w^s$ is absorbing, hence both $v^s=0$ and $w^s=0$ in $\R^d\setminus\overline{D}$, and in particular in $B$; $g=0$ follows from \cref{thm:near-field-op}.

To show that $I$ has dense range, we observe that the dual operator $I^*$,
\begin{align}
& I^*:W(B)\to W(B), \\
& (I^*g)(\bs{x}) = \int_{B}\left[\overline{u^s(\bs{x},\bs{y})} - u^s(\bs{x},\bs{y})\right]g(\bs{y})d\bs{y}, \quad \bs{x}\in B, \nonumber
\end{align}
verifies $I^*g=-Ig$ and, hence, is injective.
\end{proof}

A very last result is needed before we can prove our main result---it concerns the density of the image of the so-called product volume operator.

\begin{theorem}[Product volume operator]\label{thm:prod-vol-op}
The product volume operator $\mathcal{V}$,
\begin{align}
& \mathcal{V}:W(B)\to H^{1/2}(\partial D)\times H^{1/2}(\partial D), \\
& \mathcal{V}g = (Vg, \overline{V}g), \nonumber
\end{align}
is injective and has dense range if $k^2$ is not a Dirichlet eigenvalue of $-\Delta$ in $D$.
\end{theorem}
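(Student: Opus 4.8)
The plan is to settle injectivity directly and to obtain dense range by proving that the dual operator $\mathcal{V}^*$ is injective, exactly mirroring the treatment of $V$ in \cref{thm:vol-op}. Injectivity is immediate: if $\mathcal{V}g = (Vg,\overline{V}g) = 0$, then in particular $Vg = 0$, and since $V$ is injective under the stated eigenvalue hypothesis (\cref{thm:vol-op}), we conclude $g = 0$. The second component $\overline{V}g$ plays no role here.

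For the dense range, I would first identify the dual operator. Pairing $\mathcal{V}g = (Vg,\overline{V}g)$ against $(\varphi,\psi)\in H^{-1/2}(\partial D)\times H^{-1/2}(\partial D)$ gives $\langle Vg,\varphi\rangle + \langle \overline{V}g,\psi\rangle$, so that $\mathcal{V}^*(\varphi,\psi) = V^*\varphi + \overline{V}^*\psi$, where $(V^*\varphi)(\bs{x}) = \int_{\partial D}\overline{\phi(\bs{x},\bs{y})}\varphi(\bs{y})\,dS(\bs{y})$ is the conjugate single-layer potential and $(\overline{V}^*\psi)(\bs{x}) = \int_{\partial D}\phi(\bs{x},\bs{y})\psi(\bs{y})\,dS(\bs{y})$ is the ordinary single-layer potential, both restricted to $B$. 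Since dense range of $\mathcal{V}$ is equivalent to injectivity of $\mathcal{V}^*$, it suffices to show that $\mathcal{V}^*(\varphi,\psi)=0$ forces $\varphi=\psi=0$.

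So I would assume $w := V^*\varphi + \overline{V}^*\psi$ vanishes on $B$. As $w$ solves the Helmholtz equation on the connected exterior $\R^d\setminus\overline{D}\supset B$, the unique continuation principle yields $w\equiv 0$ throughout $\R^d\setminus\overline{D}$. The crux of the argument---the same mechanism used in the injectivity proof of \cref{thm:im-near-field-op}---is then to separate the two potentials by their opposite behavior at infinity: $\overline{V}^*\psi$ is radiating while $V^*\varphi$ is absorbing, so $\overline{V}^*\psi = -V^*\varphi$ is simultaneously radiating and absorbing. Subtracting the two radiation conditions forces this field to decay like $o(r^{-(d-1)/2})$, and Rellich's lemma then gives $V^*\varphi = 0$ and $\overline{V}^*\psi = 0$ separately in $\R^d\setminus\overline{D}$. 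I expect this separation to be the only genuinely delicate point, although it is routine once the radiating/absorbing template of \cref{thm:im-near-field-op} is in hand.

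It then remains to recover the densities, which proceeds exactly as in the $V^*$ step of \cref{thm:vol-op}. Each single-layer potential is continuous across $\partial D$ and vanishes outside, hence has zero Dirichlet trace on $\partial D$; since $k^2$ is not a Dirichlet eigenvalue of $-\Delta$ in $D$, each potential also vanishes in $D$, and therefore in all of $\R^d$. The jump relations for the conormal derivative of the single-layer potential \cite[Eqn.~(3.2)]{colton2019} then give $\varphi = 0$ and $\psi = 0$. Thus $\ker\mathcal{V}^* = \{0\}$, $\mathcal{V}$ has dense range, and the proof is complete.
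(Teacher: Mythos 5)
Your proof is correct and follows essentially the same route as the paper's: injectivity is inherited from the injectivity of $V$, and dense range is obtained by showing that the dual $\mathcal{V}^*(\varphi,\psi)=V^*\varphi+\overline{V}^*\psi$ is injective via unique continuation, the radiating/absorbing dichotomy for the two single-layer potentials, the non-eigenvalue hypothesis, and the jump relations. The only (immaterial) difference is one of ordering: you separate the two potentials by Rellich's lemma in the exterior before extending each into $D$, whereas the paper first propagates the vanishing of the sum into all of $\R^d$ and only then separates.
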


\begin{proof}
The operator $\mathcal{V}$ is trivially injective via \cref{thm:vol-op} for $V$ and $\overline{V}$. To show that $\mathcal{V}$ has dense range, we prove that the dual operator $\mathcal{V}^*$
\begin{align}
& \mathcal{V}^*:H^{-1/2}(\partial D)\times H^{-1/2}(\partial D)\to W(B) \\
& \mathcal{V}^*(\varphi,\psi) = V^*\varphi + \overline{V}^*\psi, \nonumber
\end{align}
is injective. Let $\varphi,\psi\in H^{-1/2}(\partial D)$ and consider the conjugate single-layer potential $v^i$ and the single-layer potential $w^i$ defined by
\begin{align}
& v^i(\bs{y}) = \int_{\partial D}\overline{\phi(\bs{x},\bs{y})}\varphi(\bs{x})dS(\bs{x}), \quad \bs{y}\in \R^d, \\
& w^i(\bs{y}) = \int_{\partial D}\phi(\bs{x},\bs{y})\psi(\bs{x})dS(\bs{x}), \quad \bs{y}\in \R^d.
\end{align}
Note that $v^i$ and $w^i$ are solutions to the Helmholtz equation in $\R^d\setminus\overline{D}$ and $D$, with near-field patterns $v^i|_{B}=V^*\varphi$ and $w^i|_{B}=V^*\psi$. Suppose that $\mathcal{V}^*(\varphi,\psi)=0$. This implies that $v^i+w^i=0$ in $\R^d\setminus\overline{D}$ (unique continuation principle). The regularity of $v^i+w^i\in H^1_{\mrm{loc}}(\R^d\setminus D)$ yields $v^i+w^i=0$ on $\partial D$, and hence $v^i+w^i=0$ in $D$ (since $k^2$ is not a Dirichlet eigenvalue for $-\Delta$ in $D$). Therefore, $v^i+w^i=0$ in $\R^d$. Since $v^i$ is absorbing while $w^i$ is radiating, we further have that $v^i=w^i=0$ in $\R^d$. Finally, the jump relations of the single-layer potential through $\partial D$ imply that $\varphi=\psi=0$.
\end{proof}

We have collected all the necessary ingredients to prove the main result of our paper, which justifies the use of the LSM with the imaginary near-field operator $I$. The following theorem echoes that of the far-field operator \cite[Thm.~5.35]{colton2019}.

\begin{theorem}[Linear sampling method for $I$]\label{thm:LSM-I}
Let us assume that $k^2$ is not a Dirichlet eigenvalue of $-\Delta$ in $D$. For any $\bs{z}\in D$ and $\epsilon>0$, there exists $g_{\bs{z}}^\epsilon\in W(B)$ such that
\begin{align}\label{eq:LSM-ineq}
\Vert Ig_{\bs{z}}^\epsilon - \phi|_{B}(\cdot,\bs{z})\Vert_{L^2(B)} < \epsilon.
\end{align}
Moreover, the volume potential $Vg_{\bs{z}}^\epsilon$ and the conjugate volume potential $\overline{V}g_{\bs{z}}^\epsilon$ remain bounded in the $H^{1/2}(\partial D)$-norm as $\epsilon\to0$.

For any $\bs{z}\notin D$, every $g_{\bs{z}}^\epsilon\in W(B)$ that satisfies \cref{eq:LSM-ineq} for any $\epsilon>0$ is such that
\begin{align}
\lim_{\epsilon\to0}\Vert Vg_{\bs{z}}^\epsilon\Vert_{H^{1/2}(\partial D)} = \infty \quad \text{or} \quad \lim_{\epsilon\to0}\Vert \overline{V}g_{\bs{z}}^\epsilon\Vert_{H^{1/2}(\partial D)} = \infty.
\end{align}
\end{theorem}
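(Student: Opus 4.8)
The plan is to mirror the classical analysis of the linear sampling method for the far-field operator \cite[Thm.~5.35]{colton2019}, exploiting the factorization machinery of \cref{sec:vol-near-field,sec:imag-near-field}. By \cref{thm:factorization} we have $N=-AV$ and $\overline{N}=-\overline{A}\,\overline{V}$, so the imaginary near-field operator factors as $I=N-\overline{N}=\mathcal{A}\mathcal{V}$, where $\mathcal{V}g=(Vg,\overline{V}g)$ is the product volume operator of \cref{thm:prod-vol-op} and $\mathcal{A}:H^{1/2}(\partial D)\times H^{1/2}(\partial D)\to W(B)$ is the bounded operator $\mathcal{A}(f_1,f_2)=-Af_1+\overline{A}f_2$. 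The two halves of the theorem then follow from the density of the range of $\mathcal{V}$ (for $\bs{z}\in D$) and from a range characterization for $\mathcal{A}$ extending \cref{lem:range} (for $\bs{z}\notin D$).

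For $\bs{z}\in D$, \cref{lem:range} gives $\phi|_{B}(\cdot,\bs{z})=Af_{\bs{z}}$ with $f_{\bs{z}}=\phi|_{\partial D}(\cdot,\bs{z})\in H^{1/2}(\partial D)$, so that $\mathcal{A}(-f_{\bs{z}},0)=\phi|_{B}(\cdot,\bs{z})$. Since $\mathcal{V}$ has dense range by \cref{thm:prod-vol-op}, I would choose $g_{\bs{z}}^\epsilon\in W(B)$ with $\mathcal{V}g_{\bs{z}}^\epsilon=(Vg_{\bs{z}}^\epsilon,\overline{V}g_{\bs{z}}^\epsilon)$ as close to $(-f_{\bs{z}},0)$ in $H^{1/2}(\partial D)\times H^{1/2}(\partial D)$ as required; boundedness of $\mathcal{A}$ then yields $\Vert Ig_{\bs{z}}^\epsilon-\phi|_{B}(\cdot,\bs{z})\Vert_{L^2(B)}=\Vert\mathcal{A}(\mathcal{V}g_{\bs{z}}^\epsilon-(-f_{\bs{z}},0))\Vert_{L^2(B)}<\epsilon$. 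Because $\mathcal{V}g_{\bs{z}}^\epsilon\to(-f_{\bs{z}},0)$, both $Vg_{\bs{z}}^\epsilon$ and $\overline{V}g_{\bs{z}}^\epsilon$ converge in $H^{1/2}(\partial D)$ and hence stay bounded, giving the first claim.

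For $\bs{z}\notin D$, I would argue by contradiction: suppose that along some sequence $\epsilon_n\to0$ both $\Vert Vg_{\bs{z}}^{\epsilon_n}\Vert_{H^{1/2}(\partial D)}$ and $\Vert\overline{V}g_{\bs{z}}^{\epsilon_n}\Vert_{H^{1/2}(\partial D)}$ remain bounded. Then $\mathcal{V}g_{\bs{z}}^{\epsilon_n}$ is bounded in the Hilbert space $H^{1/2}(\partial D)\times H^{1/2}(\partial D)$, so a subsequence converges weakly to some $(f_1,f_2)$. The operators $A$ and $\overline{A}$ are compact from $H^{1/2}(\partial D)$ into $L^2(B)$, since $B$ is separated from $\partial D$ and the boundary values of (radiating or absorbing) solutions are mapped to near-field patterns that are real-analytic on $\overline{B}$; hence $\mathcal{A}$ is compact and sends this weakly convergent subsequence to a strongly convergent one. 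Passing to the limit in $Ig_{\bs{z}}^{\epsilon_n}=\mathcal{A}\mathcal{V}g_{\bs{z}}^{\epsilon_n}$ and using $Ig_{\bs{z}}^{\epsilon_n}\to\phi|_{B}(\cdot,\bs{z})$ in $L^2(B)$ gives $\mathcal{A}(f_1,f_2)=-Af_1+\overline{A}f_2=\phi|_{B}(\cdot,\bs{z})$. Writing $w_1$ (resp.\ $w_2$) for the radiating (resp.\ absorbing) solution of the Helmholtz equation in $\R^d\setminus\overline{D}$ with boundary data $f_1$ (resp.\ $f_2$), this reads $(w_2-w_1)|_{B}=\phi|_{B}(\cdot,\bs{z})$; by unique continuation on the connected set $(\R^d\setminus\overline{D})\setminus\{\bs{z}\}$, the functions $w_2-w_1$ and $\phi(\cdot,\bs{z})$ coincide throughout. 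This is impossible: $w_2-w_1$ is smooth in the interior of $\R^d\setminus\overline{D}$ and $H^1$ up to $\partial D$, whereas $\phi(\cdot,\bs{z})$ is singular at $\bs{z}$ when $\bs{z}\in\R^d\setminus\overline{D}$ and fails to be $H^1$ near $\partial D$ when $\bs{z}\in\partial D$---precisely the contradiction exploited in \cref{lem:range}.

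I expect the main obstacle to be the passage from weak to strong convergence in the second part, which hinges on the compactness of $A$ and $\overline{A}$ into $L^2(B)$; this is where the hypothesis that the receiver volume $B$ is bounded away from $\partial D$ (so that near-field patterns are analytic on $\overline{B}$) is essential, and it is the exact analogue of the analyticity of far-field patterns used in \cite[Thm.~5.35]{colton2019}. A secondary point requiring care is the precise reading of the alternative ``$\lim\Vert Vg_{\bs{z}}^\epsilon\Vert=\infty$ or $\lim\Vert\overline{V}g_{\bs{z}}^\epsilon\Vert=\infty$'': what the contradiction actually establishes is that $\Vert\mathcal{V}g_{\bs{z}}^\epsilon\Vert$ cannot remain bounded as $\epsilon\to0$, i.e.\ at least one of the two factors must blow up.
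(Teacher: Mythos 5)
Your proof is correct and follows essentially the same route as the paper's: the first half is the paper's argument almost verbatim (dense range of the product operator $\mathcal{V}$ from \cref{thm:prod-vol-op}, the factorizations of \cref{thm:factorization}, and boundedness of $A$ and $\overline{A}$), and the second half is the same weak-limit contradiction. Two small remarks. First, the compactness of $A$ and $\overline{A}$ that you single out as the main obstacle is not actually needed: a bounded linear operator is weakly sequentially continuous, so $\mathcal{V}g_{\bs{z}}^{\epsilon_n}\rightharpoonup(f_1,f_2)$ already gives $\mathcal{A}\mathcal{V}g_{\bs{z}}^{\epsilon_n}\rightharpoonup\mathcal{A}(f_1,f_2)$ weakly in $L^2(B)$, and since $Ig_{\bs{z}}^{\epsilon_n}\to\phi|_{B}(\cdot,\bs{z})$ strongly, uniqueness of weak limits identifies $\mathcal{A}(f_1,f_2)=\phi|_{B}(\cdot,\bs{z})$; in particular no hypothesis that $B$ is bounded away from $\partial D$ plays any role here. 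Second, your endgame deviates slightly from the paper's: you derive the contradiction directly from the interior regularity of $w_2-w_1$ versus the singularity of $\phi(\cdot,\bs{z})$ at $\bs{z}$ (and its failure to be $H^1$ near $\partial D$ when $\bs{z}\in\partial D$), whereas the paper first separates the radiating and absorbing parts---concluding $w_2=0$ and $-w_1=\phi(\cdot,\bs{z})$ in $\R^d\setminus\overline{D}$ by matching behavior at infinity---so as to land exactly in $\mathrm{range}(A)$ and invoke \cref{lem:range}. Both are valid; yours re-runs the singularity argument of \cref{lem:range} inline, the paper's reuses that lemma at the cost of the extra radiating/absorbing decomposition step. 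Your closing observation that the proof really establishes that at least one of the two norms has unbounded limit superior (rather than a genuine limit) applies equally to the paper's own proof.
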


\begin{proof}
Let $\bs{z}\in D$, which implies that $\phi|_{\partial D}(\cdot,\bs{z})\in H^{1/2}(\partial D)$. Under our assumption on $k$, from \cref{thm:prod-vol-op}, given any $\epsilon>0$, there exists a sequence $g_{\bs{z}}^\epsilon\in W(B)$ such that
\begin{align}
& \Vert Vg_{\bs{z}}^\epsilon + \phi|_{\partial D}(\cdot,\bs{z})\Vert_{H^{1/2}(\partial D)} < \frac{\epsilon}{2\max(\Vert A\Vert,\Vert\overline{A}\Vert)}, \\
& \Vert \overline{V}g_{\bs{z}}^\epsilon\Vert_{H^{1/2}(\partial D)} < \frac{\epsilon}{2\max(\Vert A\Vert,\Vert\overline{A}\Vert)}.
\end{align}
The factorizations $N=-AV$ and $\overline{N}=-\overline{A}\,\overline{V}$, combined with the triangle inequality, yield
\begin{align}
\Vert Ig_{\bs{z}}^n - \phi|_{B}(\cdot,\bs{z})\Vert_{L^2(B)} \leq \epsilon.
\end{align}

Suppose now that $\bs{z}\notin D$ and consider $g_{\bs{z}}^\epsilon\in W(B)$ that satisfies \cref{eq:LSM-ineq} such that
\begin{align}
\limsup_{\epsilon\to0}\Vert Vg_{\bs{z}}^\epsilon\Vert_{H^{1/2}(\partial D)} < \infty \quad \text{and} \quad \limsup_{\epsilon\to0}\Vert \overline{V}g_{\bs{z}}^\epsilon\Vert_{H^{1/2}(\partial D)} < \infty.
\end{align}
Without loss of generality, we assume that $Vg_{\bs{z}}^\epsilon$ and $\overline{V}g_{\bs{z}}^\epsilon$ weakly converge to some functions $f$ and $g$ in $H^{1/2}(\partial D)$ as $\epsilon\to0$. Let $v^s\in H^1_{\mrm{loc}}(\R^d\setminus D)$ be the radiating solution to the Helmholtz equation with $v^s=f$ on $\partial D$, and let $v^s|_{B}$ be its near-field pattern. Similarly, let $w^s\in H^1_{\mrm{loc}}(\R^d\setminus D)$ be the absorbing solution to the Helmholtz equation with $w^s=g$ on $\partial D$, and let $w^s|_{B}$ be its near-field pattern. Since $Ng_{\bs{z}}^\epsilon$ and $\overline{N}g_{\bs{z}}^\epsilon$ are the near-field patterns for the incident fields $-Vg_{\bs{z}}^\epsilon$ and $-\overline{V}g_{\bs{z}}^\epsilon$, we conclude that $\phi|_{B}(\cdot,\bs{z})=-v^s|_{B}-w^s|_{B}$ at the limit $\epsilon\to0$. Using the unique continuation principle, and equating radiating and absorbing solutions, this yields $\phi(\cdot,\bs{z})=-v^s$ and $w^s=0$ in $\R^d\setminus\overline{D}$, and in particular in $B$. This means that $\phi|_{B}(\cdot,\bs{z})\in\mrm{range}(A)$, which contradicts \cref{lem:range}.
\end{proof}

\cref{thm:LSM-I} legitimizes the utilization of the imaginary near-field matrix \cref{eq:imag-near-field-mat} for the LSM in active imaging. As a byproduct, it also supports the LSM with the cross-correlation matrix \cref{eq:cross-cor-mat} in passive imaging. Note that it is possible to prove the same theorem with $\alpha\phi|_{B}(\cdot,\bs{z}) + \beta\overline{\phi|_{B}(\cdot,\bs{z})}$ for some $\alpha,\beta\in\C$ instead of $\phi|_{B}(\cdot,\bs{z})$ in \cref{eq:LSM-ineq}.

We would like to emphasize that \cref{thm:LSM-I} does not fully justify the numerical algorithms of \cref{sec:numerics}, in which the approximate solution is built using Tikhonov regularization and Morozov's discrepancy principle. This is a well-known shortcoming of the LSM, which motivated the introduction of factorization methods \cite{kirsch2007} and the GLSM \cite{audibert2014}. It would be indeed interesting to generalize the latter methods to random sources and cross-correlations. In the case of point scatterers, however, one can provide a rigorous justification of the LSM; see, e.g., \cite{haddar2012}. We provide such a proof for the imaginary near-field operator in \cref{sec:appendix}.

\section{Numerical experiments}\label{sec:numerics}

We mentioned in \cref{sec:HK-id} that the solution to the sound-soft inverse acoustic scattering problem with the LSM consists of two steps. (We will focus on the cross-correlation matrix, but what we will describe below applies to the near-field and imaginary near-field matrices, too.) First, the cross-correlation matrix $C$ is filled out in the data acquisition step (direct problem). Second, we probe the medium by solving the linear system $Cg_{\bs{z}}=\phi_{\bs{z}}$, for various $\bs{z}\in\R^d$, in the data processing step (inverse problem). The boundary $\partial D$ of the defect $D$ coincides with those points $\bs{z}$ for which $\Vert g_{\bs{z}}\Vert_2$ is large.

\paragraph{Solving the direct problem} To fill out the matrix $C$ in \cref{eq:cross-cor-mat}, we must solve the exterior Dirichlet problem \cref{eq:ext-Dirichlet} for $L$ random sources $\bs{z}_\ell$, and then evaluate the total field at measurement points $\bs{x}_j$. Our implementations in MATLAB employ \href{https://github.com/matthieuaussal/gypsilab}{gypsilab}, an open-source MATLAB toolbox for fast numerical computation with finite and boundary elements in 2D and 3D. We utilize the single-layer formulation of the exterior Dirichlet problem---weakly singular and near-singular integrals may be computed with the method described in \cite{montanelli2022}. (It is preferable, in general, to utilize a combined integral equation approach, with both the single- and double-layer potentials, which is coercive when the wavenumber is large \cite{spence2015}. For our experiments, computations with the single-layer potential are completely fine.)

\paragraph{Solving the inverse problem} Once the matrix $C$ has been filled out, we add some random noise to it to generate a matrix $C_\delta$ (this simulates noisy measurements). To solve $C_\delta g_{\bs{z}} = \phi_{\bs{z}}$, we compute the SVD of the matrix $C_\delta$, $C_\delta= U_\delta S_\delta V_\delta^*$, and apply Tikhonov regularization with parameter $\alpha>0$. We arrive at the following equation for each component $1\leq j\leq J$,
\begin{align}\label{eq:LSM}
(V_\delta^*g_{\bs{z}})_j = \frac{\sigma_j}{\alpha + \sigma_j^2}\left(U_\delta^*\phi_{\bs{z}}\right)_j,
\end{align}
where the $\sigma_j$'s are the singular values. To choose the regularization parameter $\alpha$, we use Morozov's discrepancy principle, which enforces
\begin{align}\label{eq:Morozov}
\Vert C_\delta g_{\bs{z}} - \phi_{\bs{z}}\Vert^2_2 = \delta^2\Vert g_{\bs{z}}\Vert^2_2, 
\end{align}
where $\delta=\Vert C_\delta - C\Vert_2$. Since $\Vert C_\delta g_{\bs{z}} - \phi_{\bs{z}}\Vert_2 = \Vert U_\delta^*\left(C_\delta g_{\bs{z}}  - \phi_{\bs{z}}\right)\Vert_2$ and $\Vert g_{\bs{z}}\Vert_2 = \Vert V_\delta^*g_{\bs{z}} \Vert_2$, by combining \cref{eq:LSM} with \cref{eq:Morozov}, we end up with the following equation to solve for $\alpha$,
\begin{align}\label{eq:alpha}
\sum_{j=1}^J\frac{\alpha^2 - \delta^2\sigma_j^2}{(\alpha + \sigma_j^2)^2}\vert(U_\delta^*\phi)_j\vert^2 = 0.
\end{align}
Once $\alpha$ has been computed, the norm of $g_{\bs{z}}$ is computed via
\begin{align}
\Vert g_{\bs{z}}\Vert_2 = \Vert V_\delta^*g_{\bs{z}} \Vert_2 = \Vert S_\alpha U_\delta^*\phi_{\bs{z}}\Vert_2, 
\end{align}
where $S_\alpha$ is the diagonal matrix with entries $\sigma_j/(\alpha + \sigma_j^2)$. 

\paragraph{Full-aperture measurements} We consider the scattering of points sources by an ellipse and a kite of size $\lambda/2$ centered at $-2\lambda-2\lambda i$ and $2\lambda + 2\lambda i$ for $k=2\pi$ (wavelength $\lambda =1$). The ellipse has axes $a=1.5$ and $b=1$, while the kite is that of \cite[sect.~3.6]{colton2019}. We compare the results obtained for the near-field matrix $N$ of \cref{eq:near-field-mat}, the imaginary near-field matrix $I$ of \cref{eq:imag-near-field-mat}, and the cross-correlation matrix $C$ of \cref{eq:cross-cor-mat}. For $N$ and $I$, we take $J=80$ equispaced co-located sources and receivers on the circle of radius $5\lambda$, 
\begin{align}
\bs{x}_j = 5\lambda e^{i\theta_j},  \quad \theta_j = \frac{2\pi}{J}(j - 1), \quad  1\leq j\leq J.
\label{eq:posrecnum}
\end{align}
For the matrix $C$, the $L=80$ random sources are located on the circle of radius $50\lambda$,
\begin{align}\label{eq:random-sources}
\bs{z}_\ell = 50\lambda e^{i\theta_\ell}, \quad \theta_\ell = \frac{2\pi}{L}(\ell -1+ \beta_\ell), \quad 1\leq \ell\leq L,
\end{align}
where $\beta_\ell$ is drawn from the uniform distribution on $[0,\beta]$ with $\beta=0.1$, and we measure at the $J=80$ equispaced points $\bs{x}_j$ defined in \cref{eq:posrecnum}. Finally, we add some white noise with amplitude $5\times 10^{-2}$ to each matrix, and probe the medium on a $100\times100$ uniform grid on $[-6\lambda,6\lambda]\times[-6\lambda,6\lambda]$. The results are shown in \cref{fig:elli} and \cref{fig:kite} for the ellipse and the kite. The defect is well identified by our novel sampling method, based on cross-correlations and random sources. This illustrates that the LSM can be utilized in passive imaging.

\begin{figure}
\centering
\def\scl{0.18}
\includegraphics[scale=\scl]{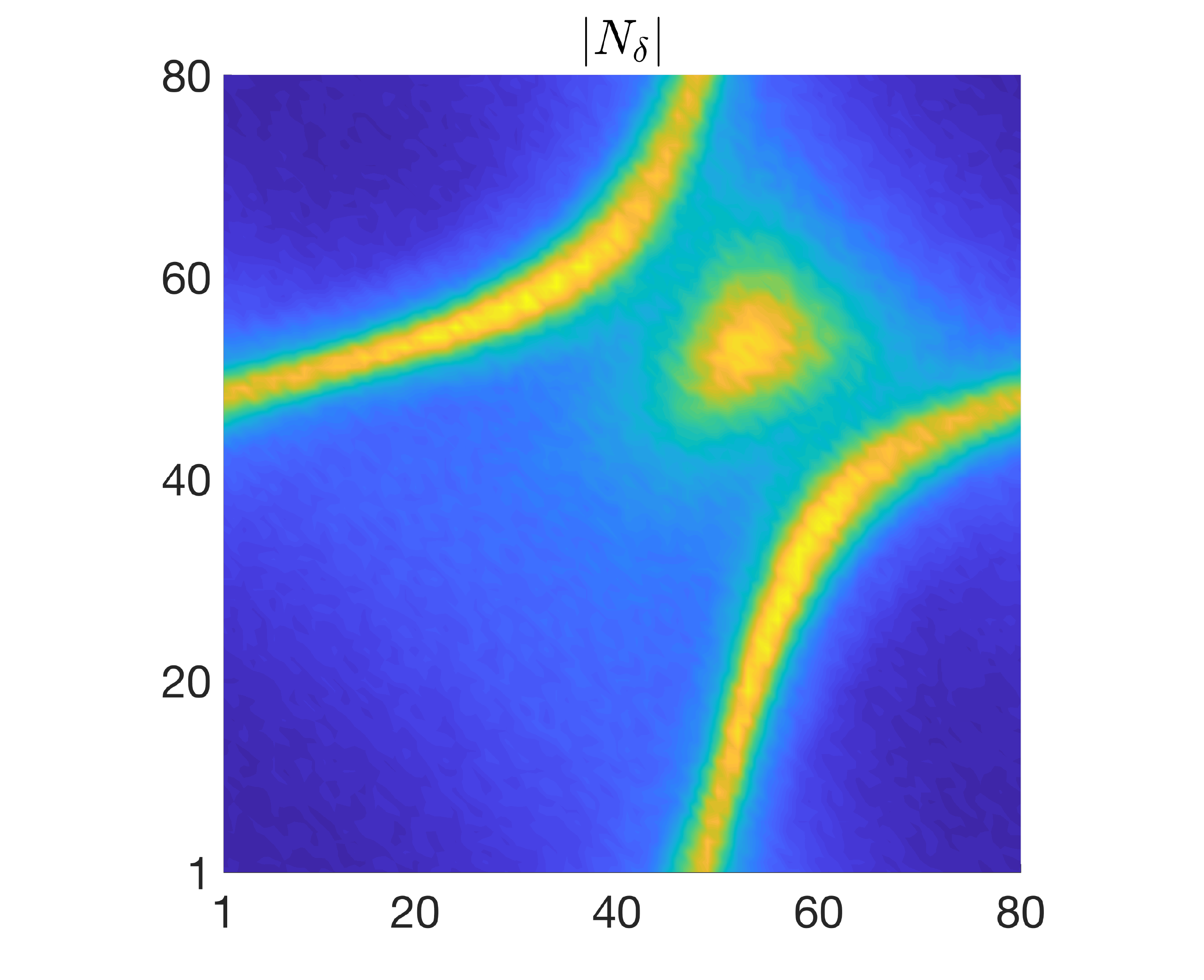} 
\includegraphics[scale=\scl]{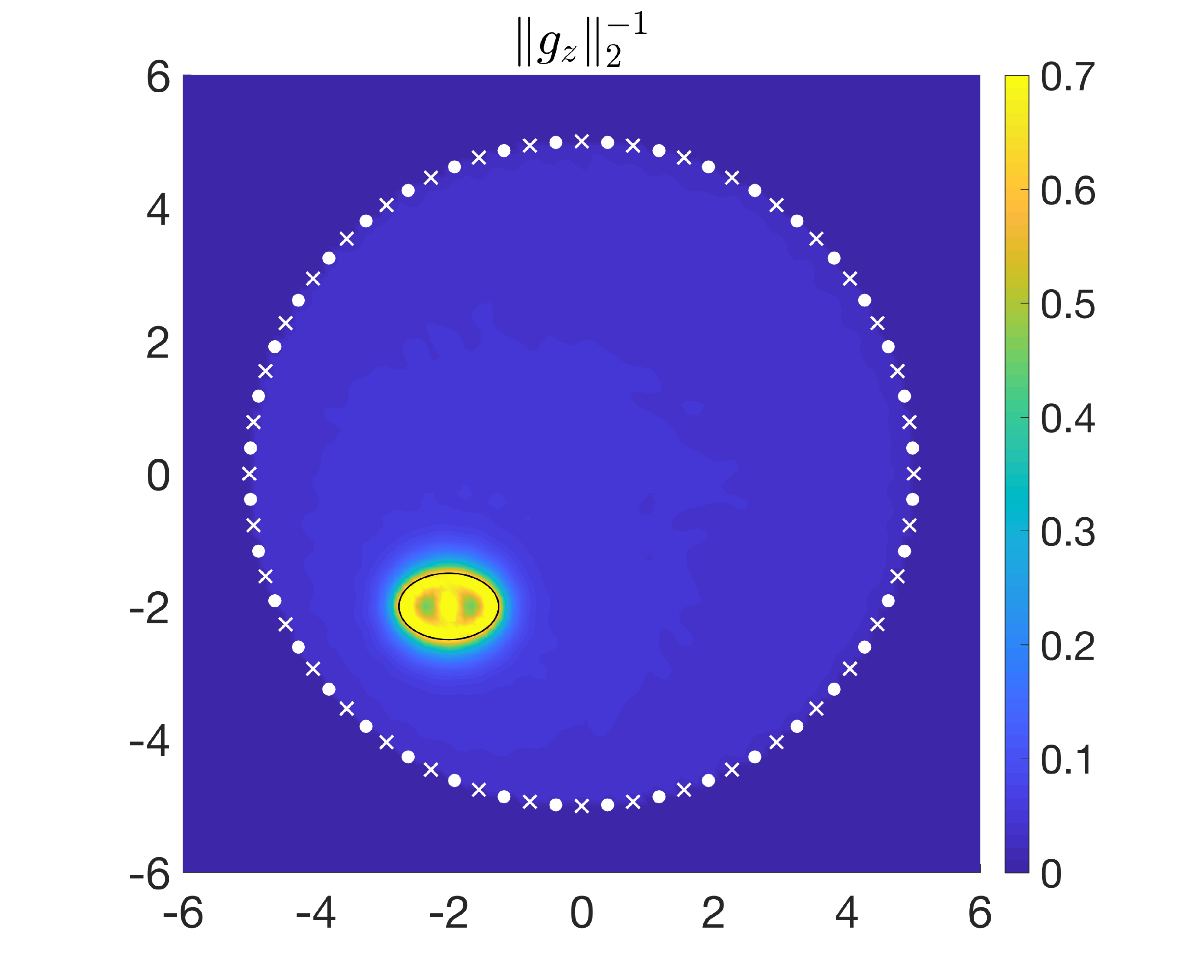} \\
\includegraphics[scale=\scl]{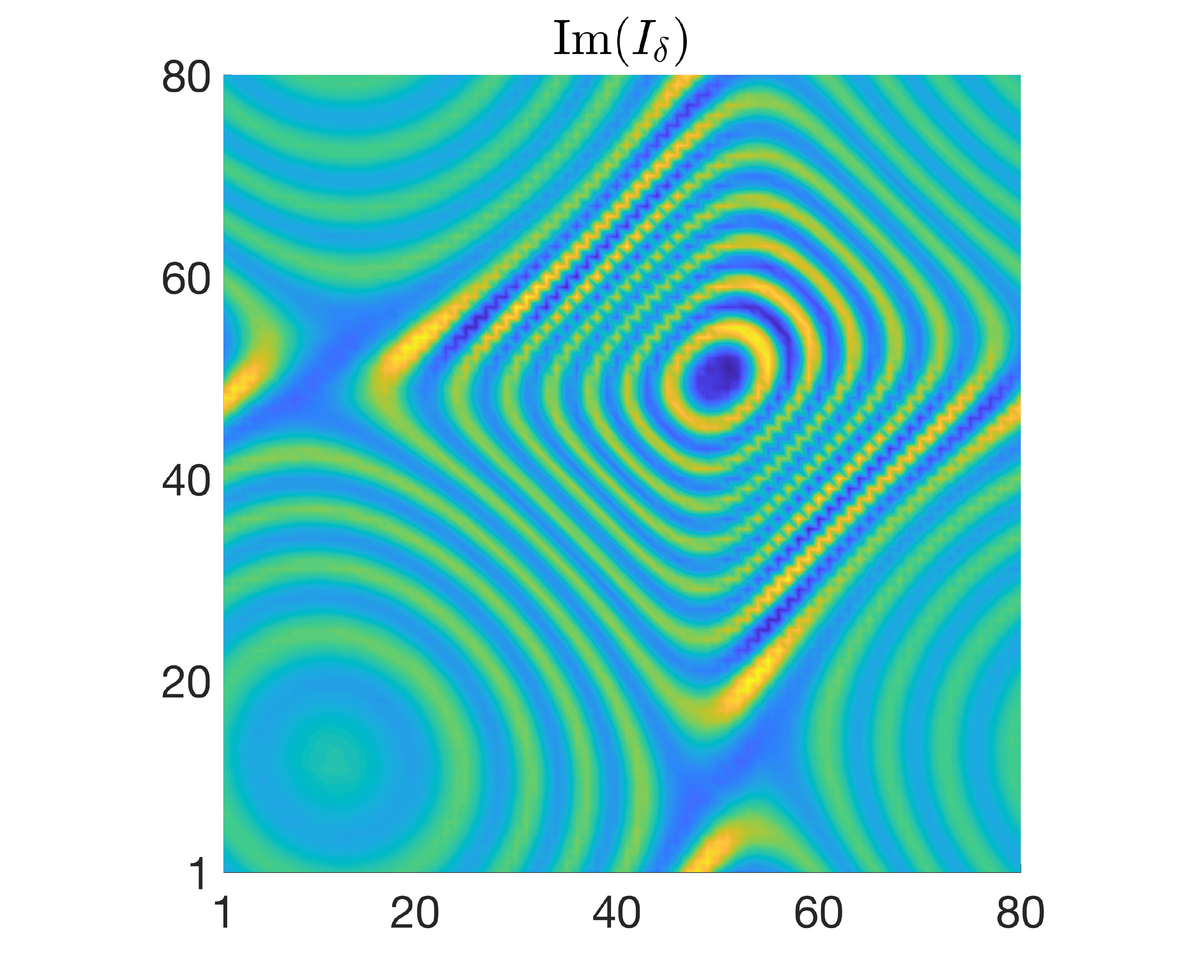} 
\includegraphics[scale=\scl]{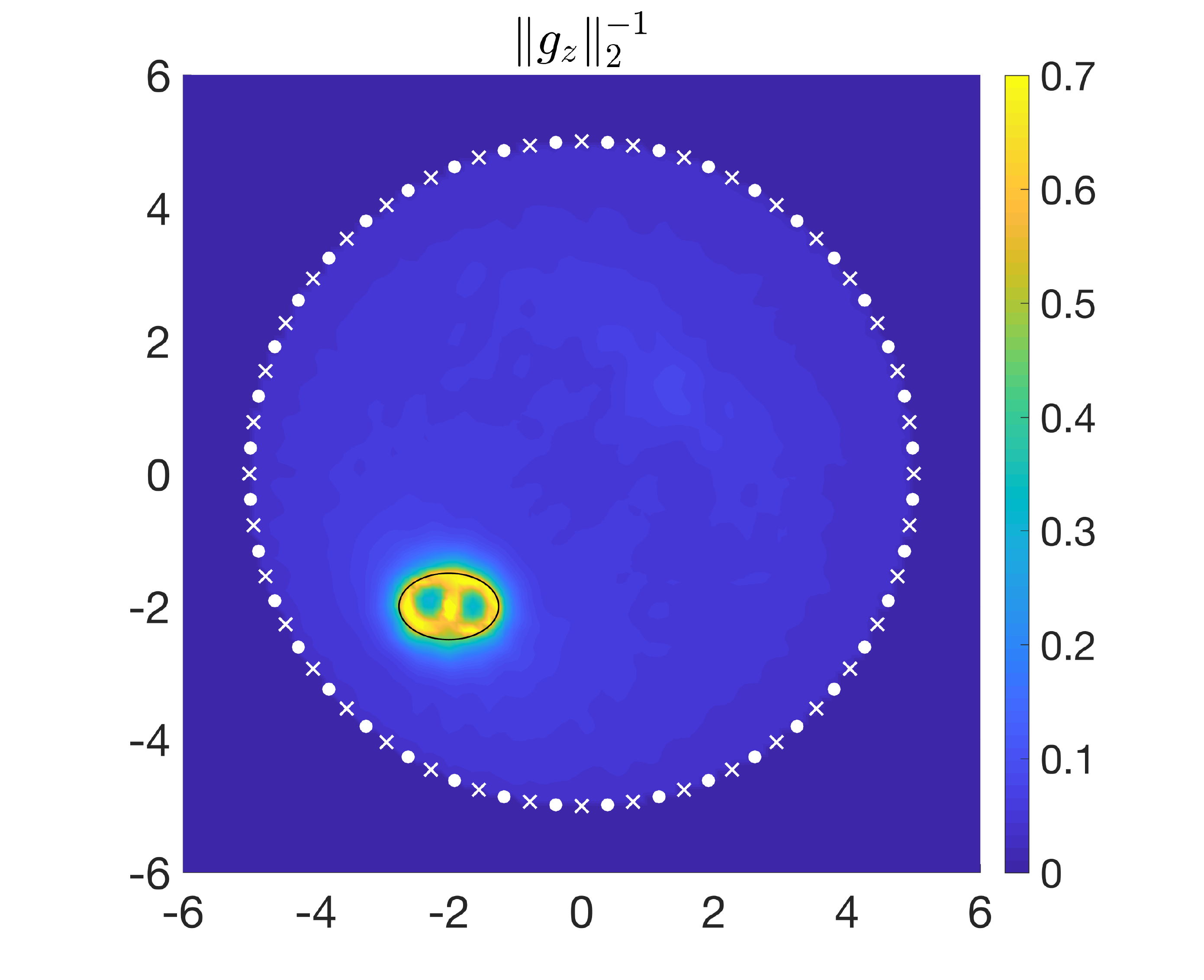}  \\
\includegraphics[scale=\scl]{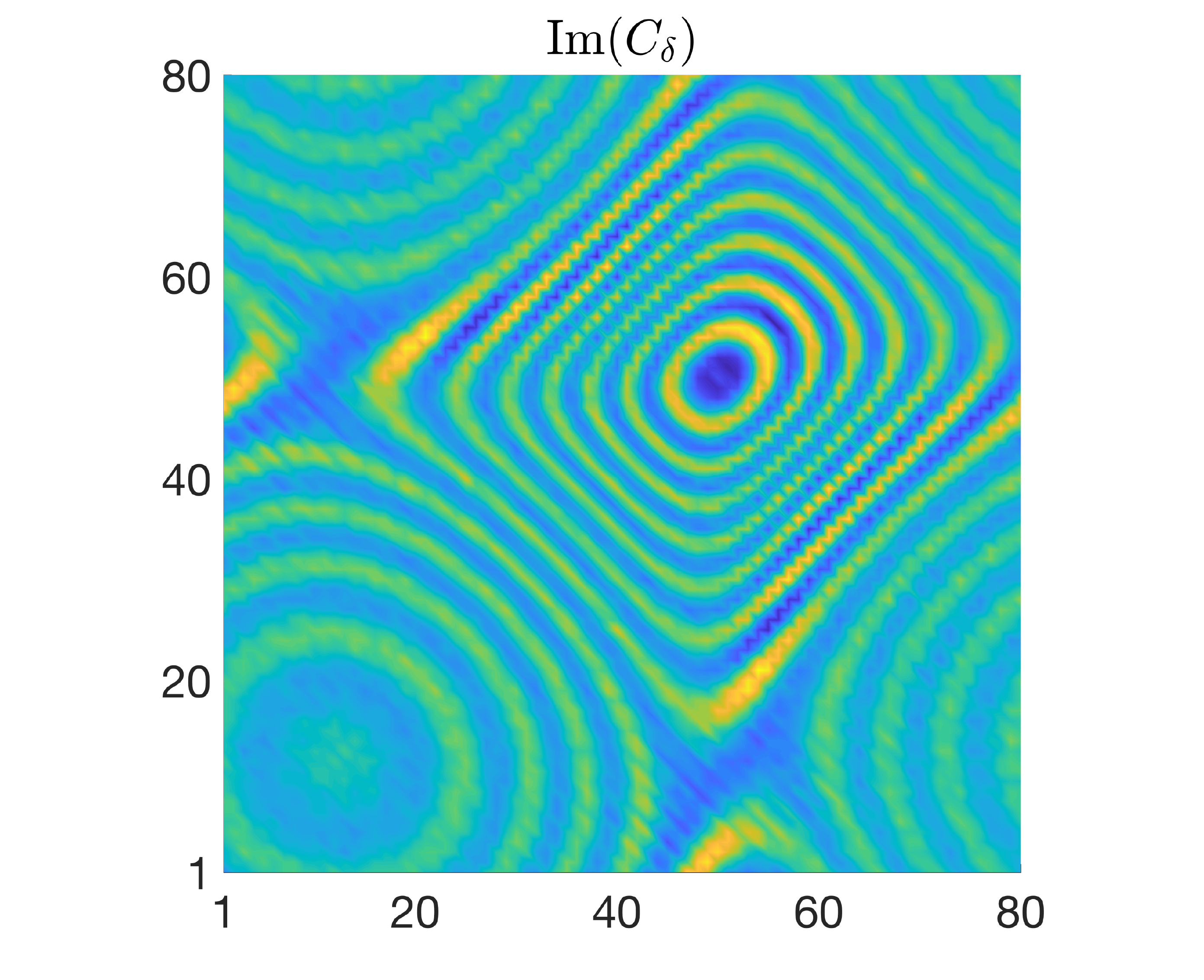} 
\includegraphics[scale=\scl]{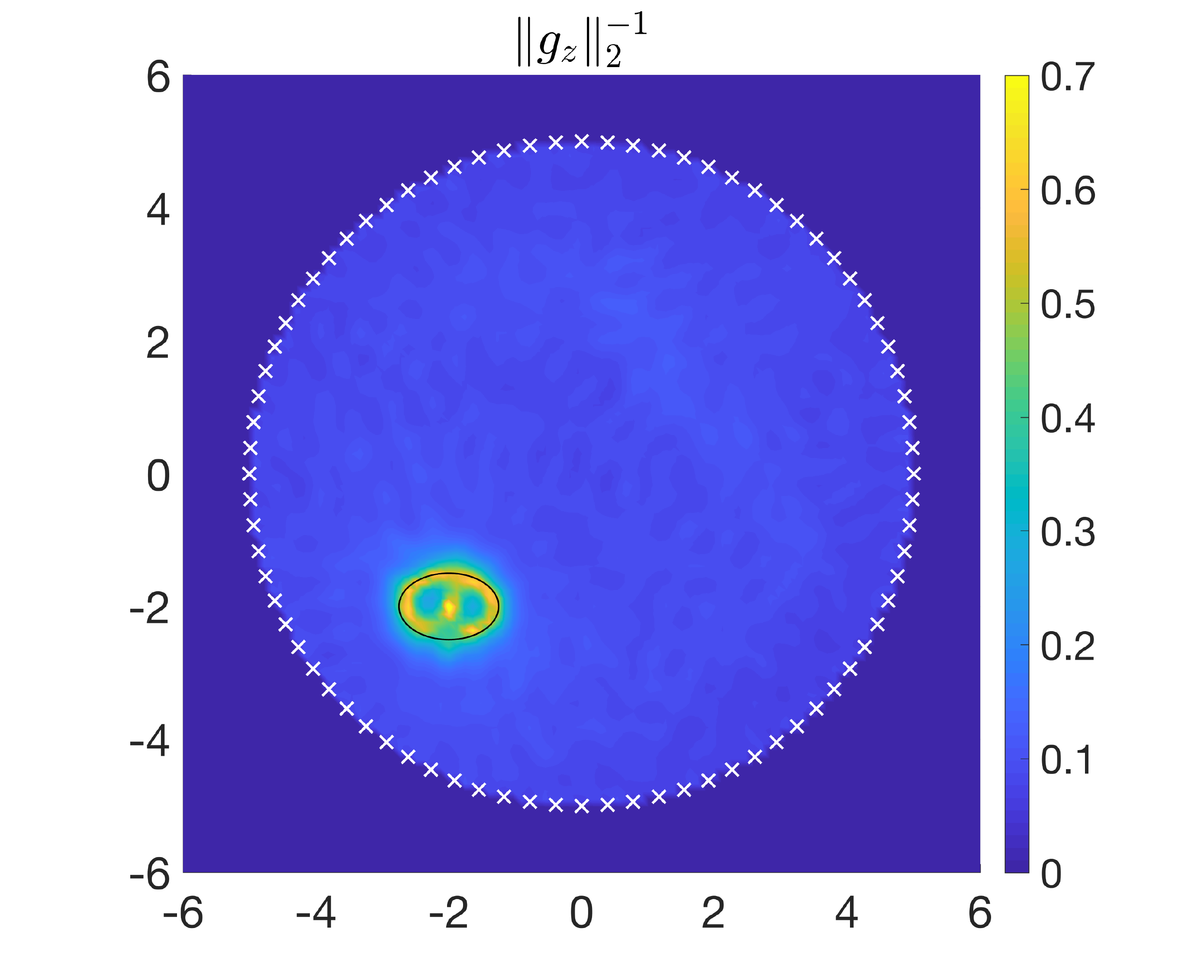}
\caption{The ellipse is well reconstructed by all three methods---the LSM with the standard near-field matrix \cref{eq:near-field-mat} (first row), with the imaginary near-field matrix \cref{eq:imag-near-field-mat} (second row), and with the cross-correlation matrix \cref{eq:cross-cor-mat} (third row). The first column displays the corresponding matrices and the second column the indicator function (values outside of the circle of radius $5\lambda$ were zeroed out). The sources are represented by dots, and the measurement points by crosses (for $N$ and $I$, we plotted every other source/measurement point; for $C$, the random sources are outside of the plot). Our novel sampling method, based on cross-correlations and random sources, shows similar results to the LSM with deterministic sources.}
\label{fig:elli}
\end{figure}

\begin{figure}
\centering
\def\scl{0.18}
\includegraphics[scale=\scl]{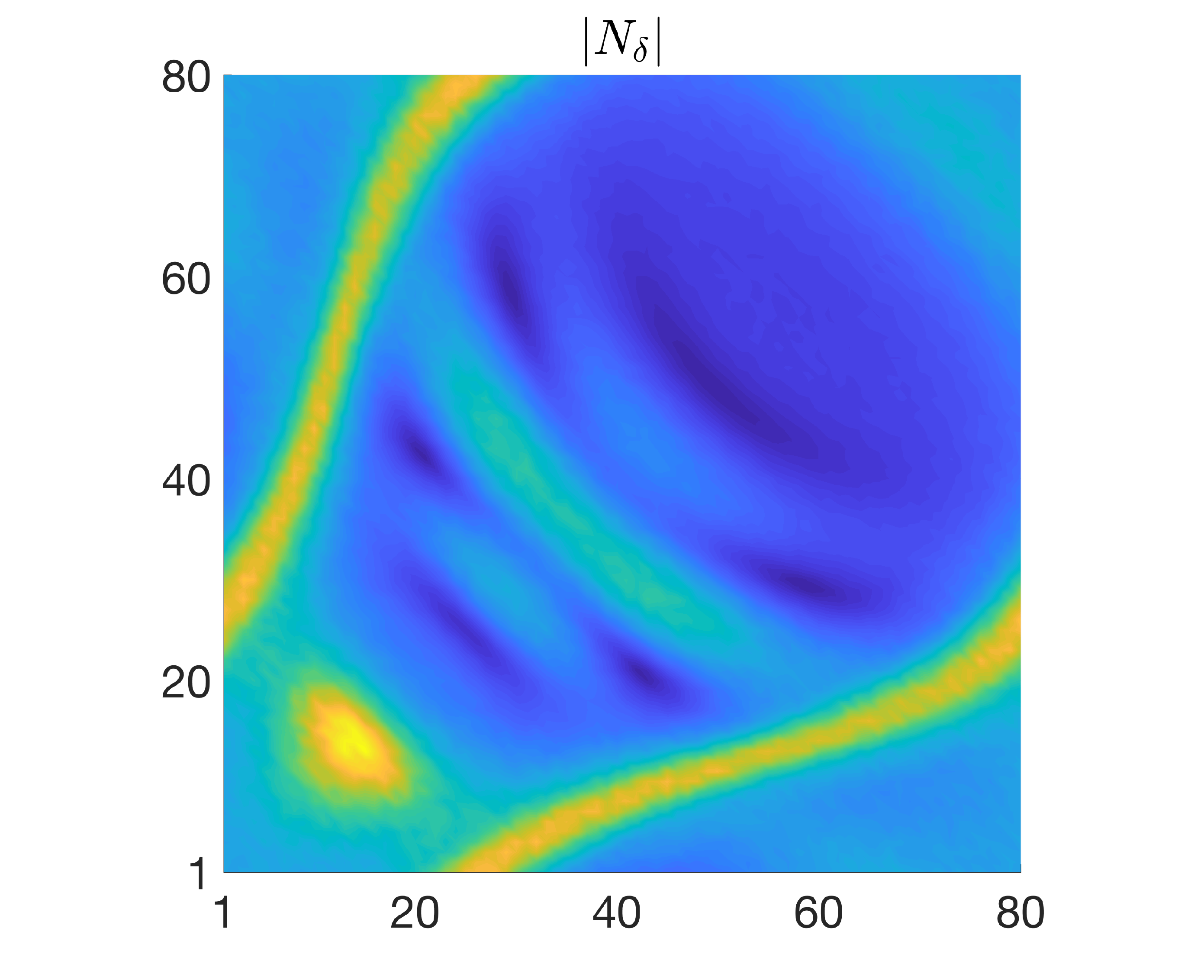} 
\includegraphics[scale=\scl]{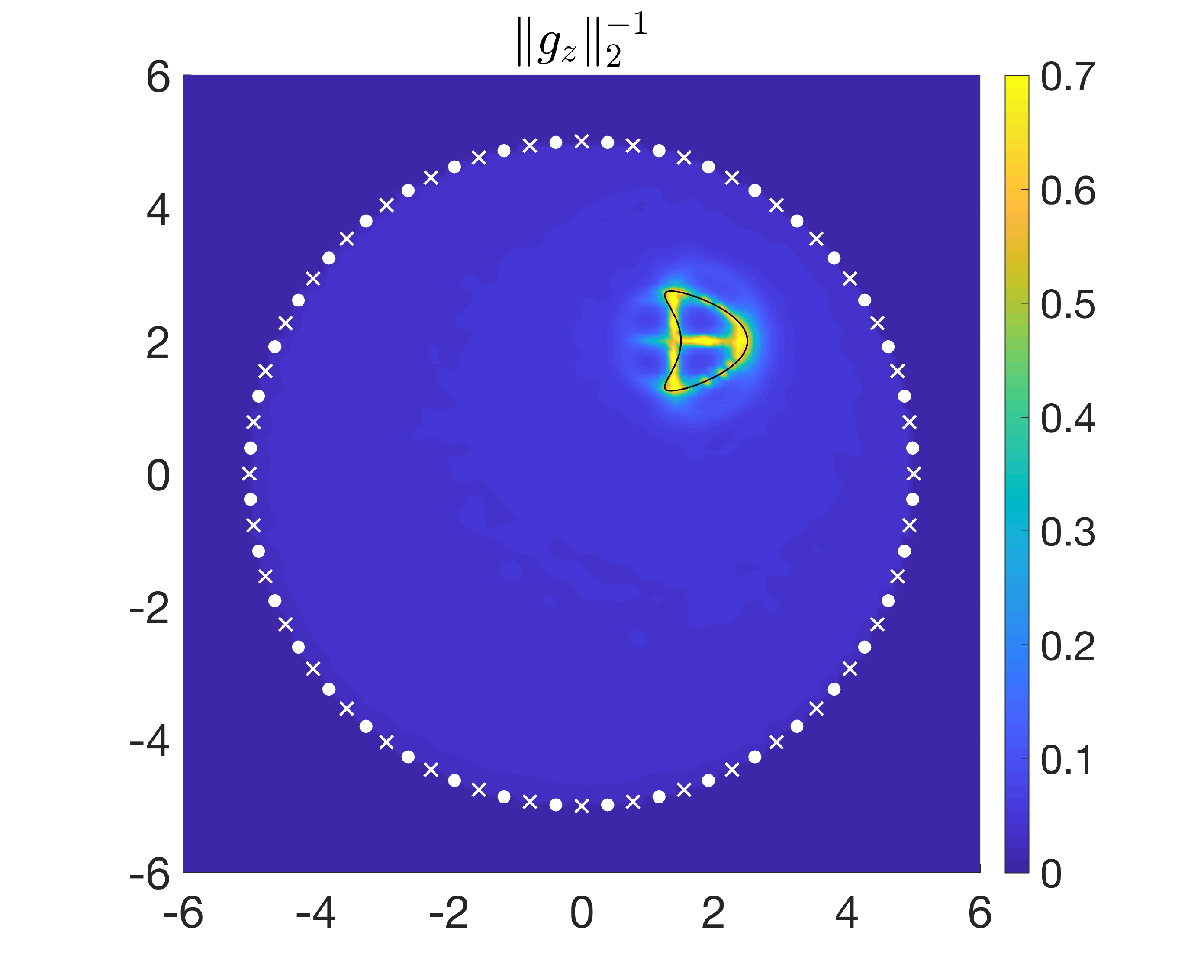} \\
\includegraphics[scale=\scl]{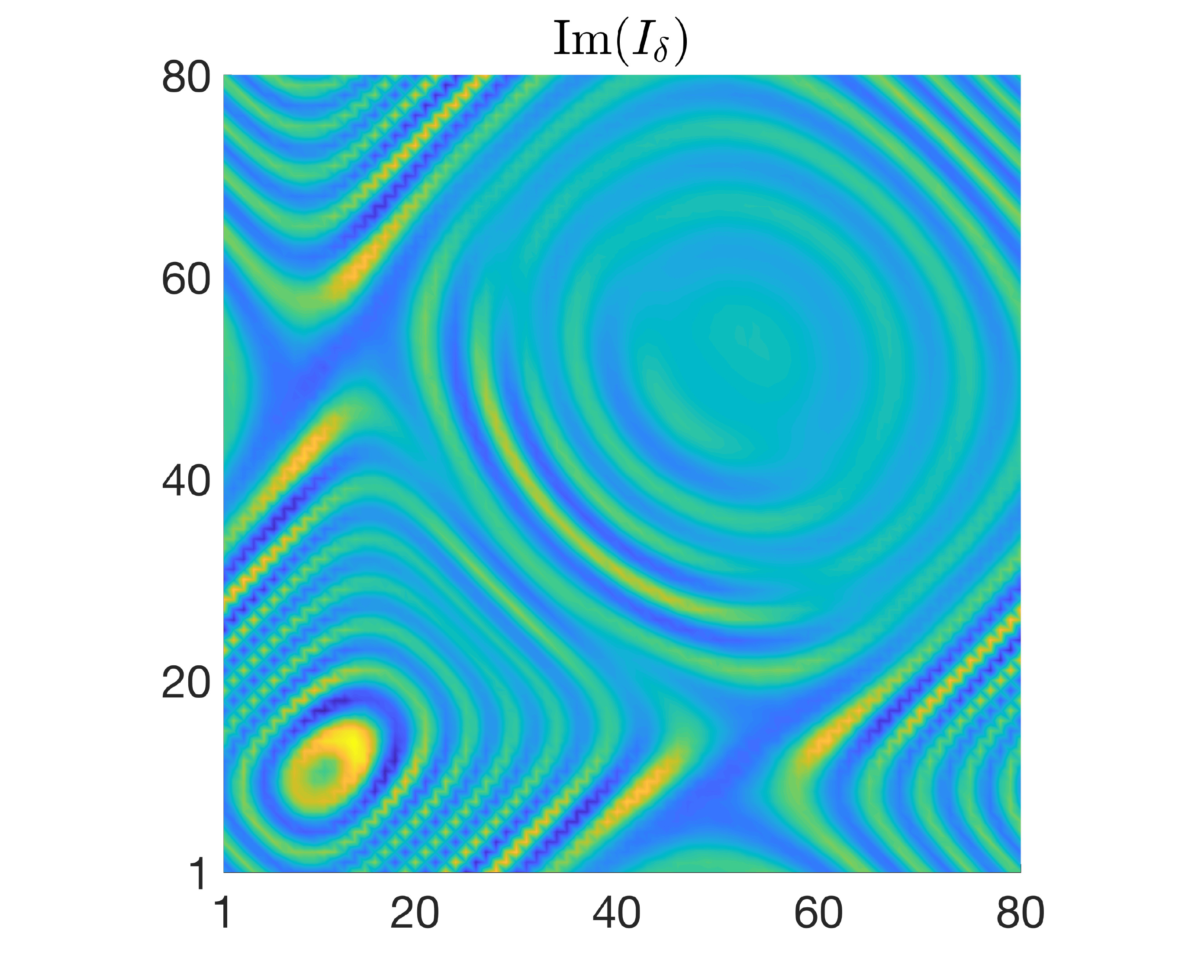} 
\includegraphics[scale=\scl]{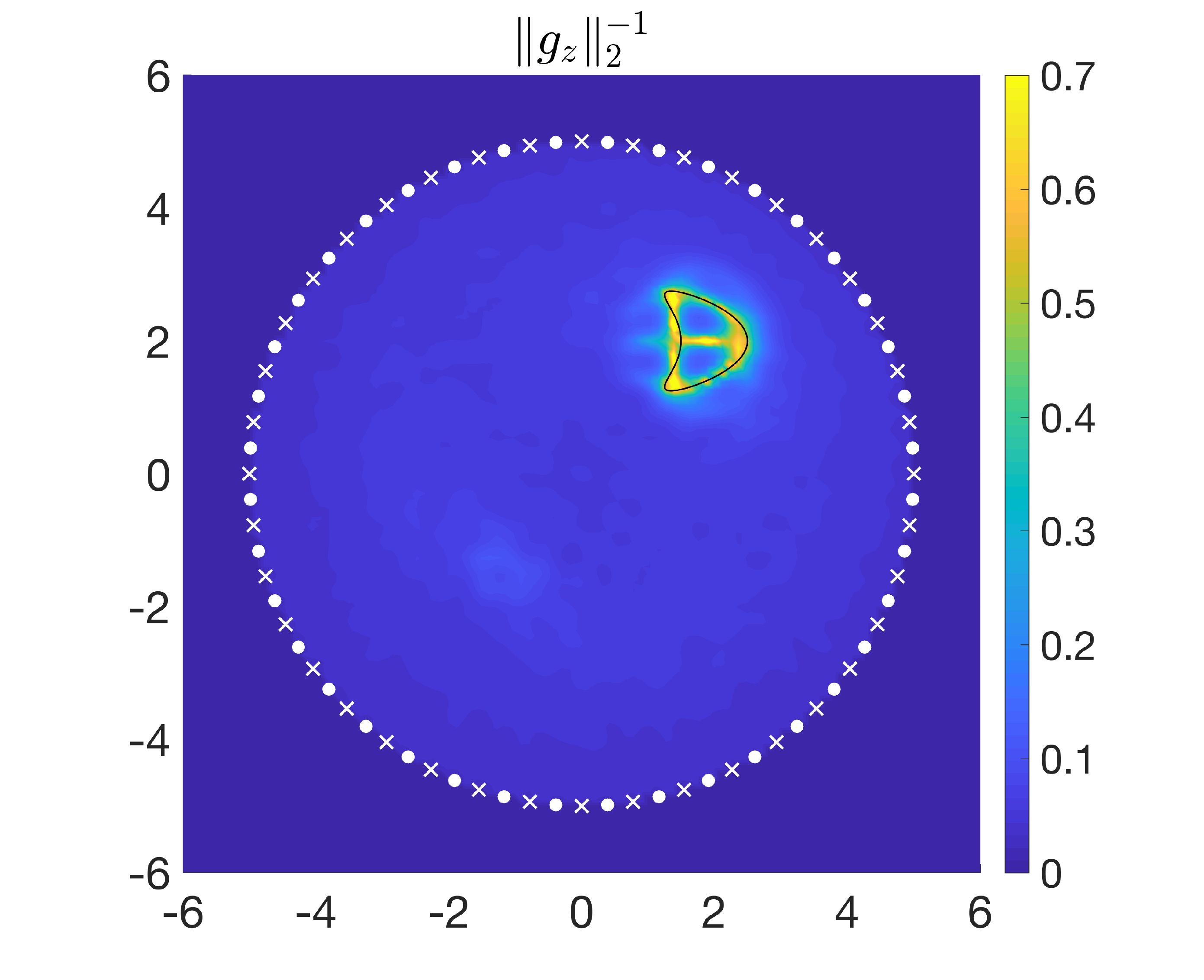}  \\
\includegraphics[scale=\scl]{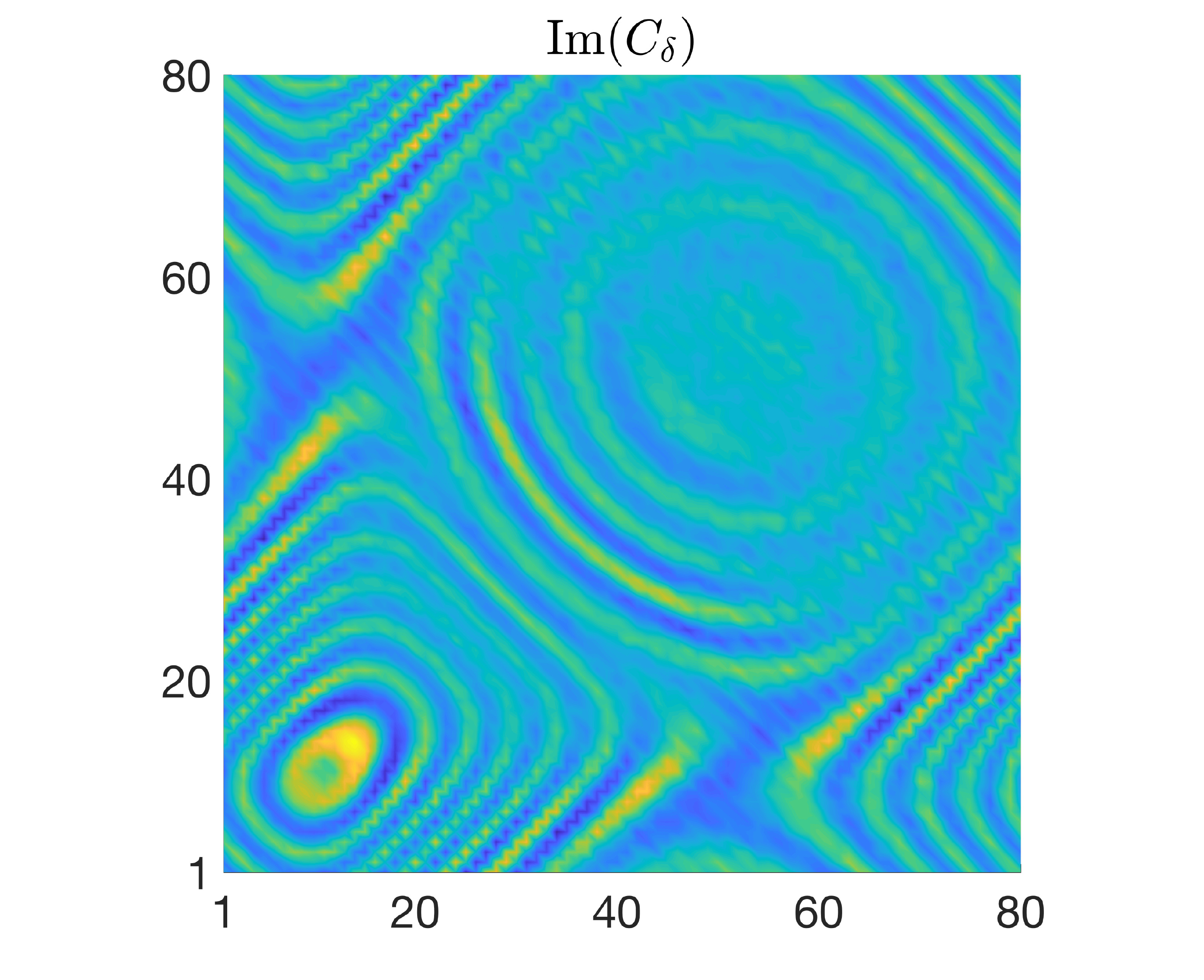} 
\includegraphics[scale=\scl]{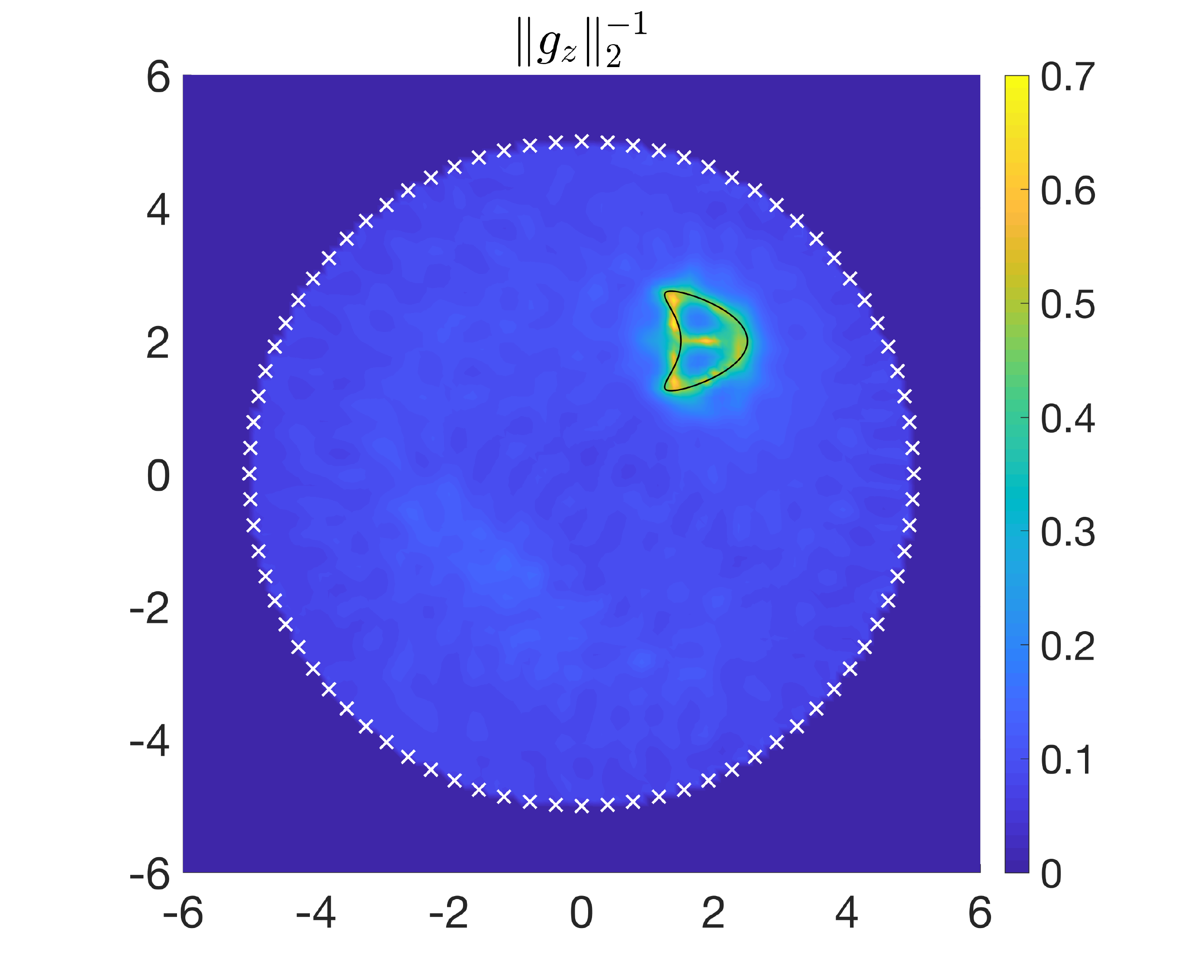}
\caption{In this experiment, too, the defect (a kite) is well identified by the LSM with the standard near-field matrix \cref{eq:near-field-mat} (first row), with the imaginary near-field matrix \cref{eq:imag-near-field-mat}, and with the cross-correlation matrix \cref{eq:cross-cor-mat} (third row). This demonstrates that the LSM can be utilized in passive imaging with random sources using the cross-correlation of the measurements.}
\label{fig:kite}
\end{figure}

\paragraph{Influence of the perturbation $\beta$} We investigate the influence of $\beta$ in \cref{eq:random-sources} in \cref{fig:kite-beta}. We perform the experiment of the previous paragraph for the cross-correlation matrix $C$ with the kite of \cref{fig:kite} for $\beta=0.3$, $\beta=0.6$, and $\beta=0.9$. We observe there that the quality of the reconstruction of the shape deteriorates when the value of $\beta$ increases. This is expected since $C$ is the trapezoidal rule approximation to $I$. Therefore, when the distribution of sources in \cref{eq:random-sources} deviates from the equispaced distribution, the accuracy in computing $C$ decreases---using a larger number of random sources improves the reconstruction process. (The trapezoidal rule is exponentially convergent for analytic functions and equispaced points \cite{trefethen2014}; for functions with $\sigma$ derivatives, it converges at the rate $\OO(N^\sigma)$ \cite[Thm.~4.3]{montanelli2015b}. For $\beta$-perturbed points, algebraic convergence for differentiable functions has only been proven for $\beta<1/2$, at a slower rate $\OO(N^{\sigma-4\beta})$ \cite[Thm.~1]{austin2017b}. For details about computations with trigonometric interpolants, we refer to \cite{austin2017a, montanelli2017phd, montanelli2020b, trefethen2019}.)

\begin{figure}
\centering
\def\scl{0.17}
\includegraphics[scale=\scl]{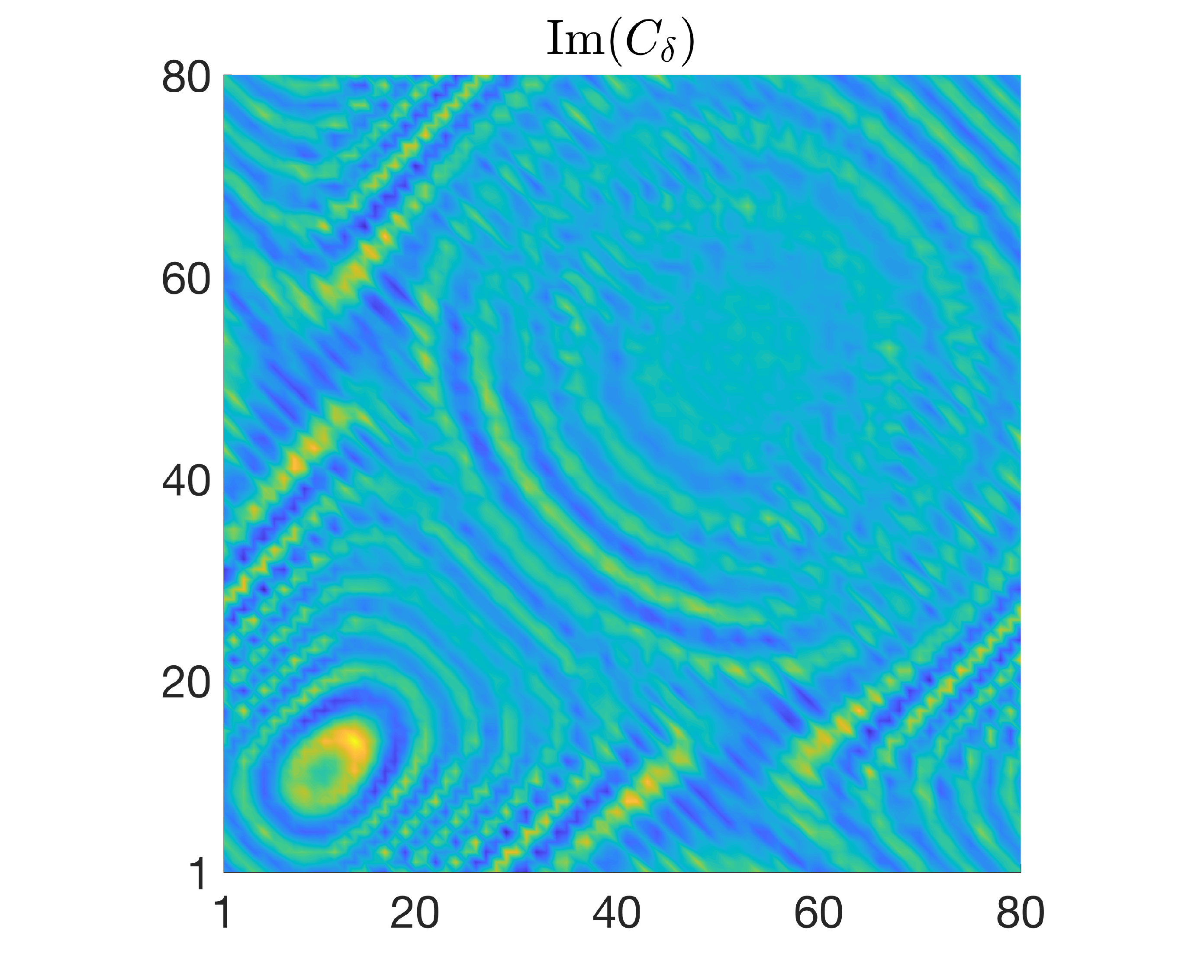} 
\includegraphics[scale=\scl]{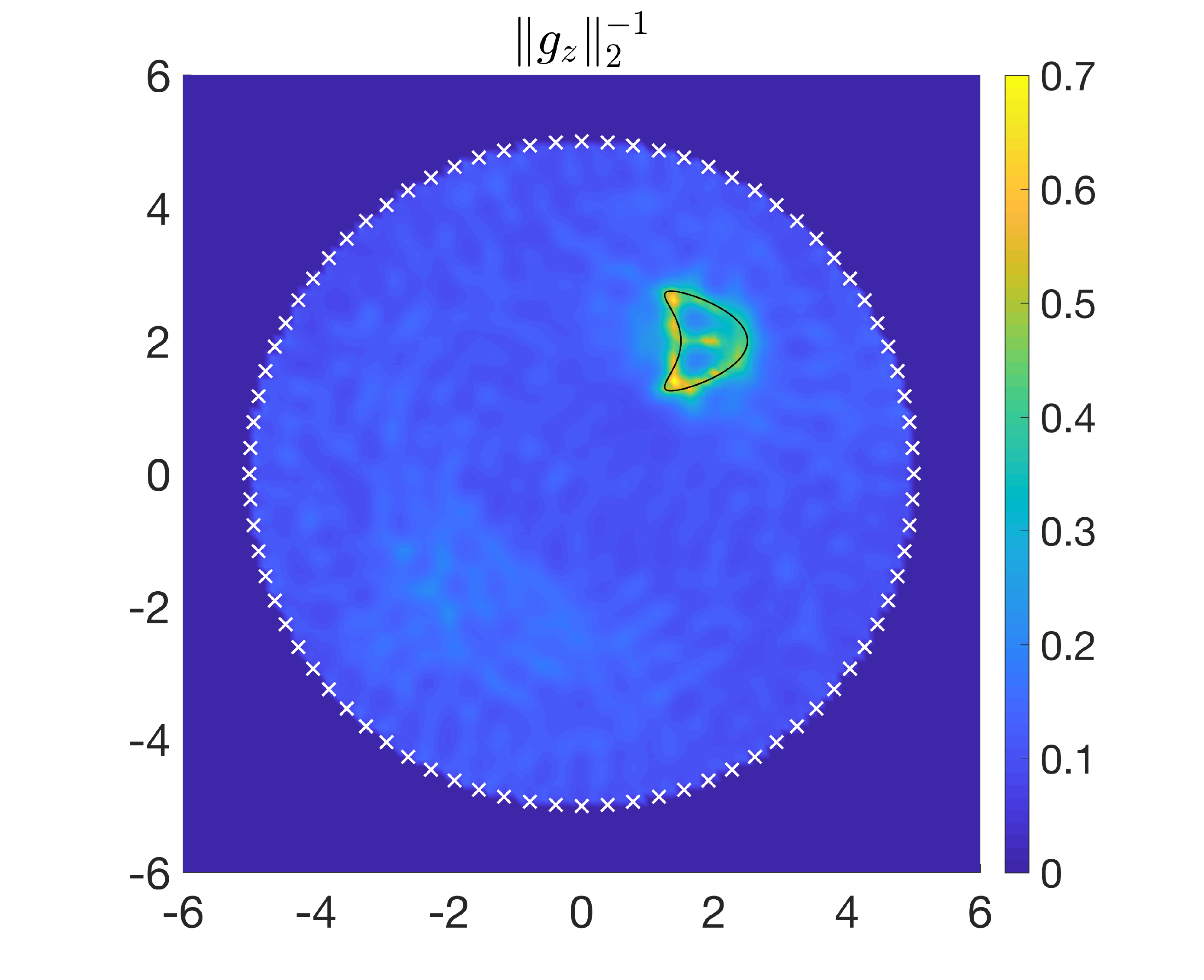}  
\includegraphics[scale=\scl]{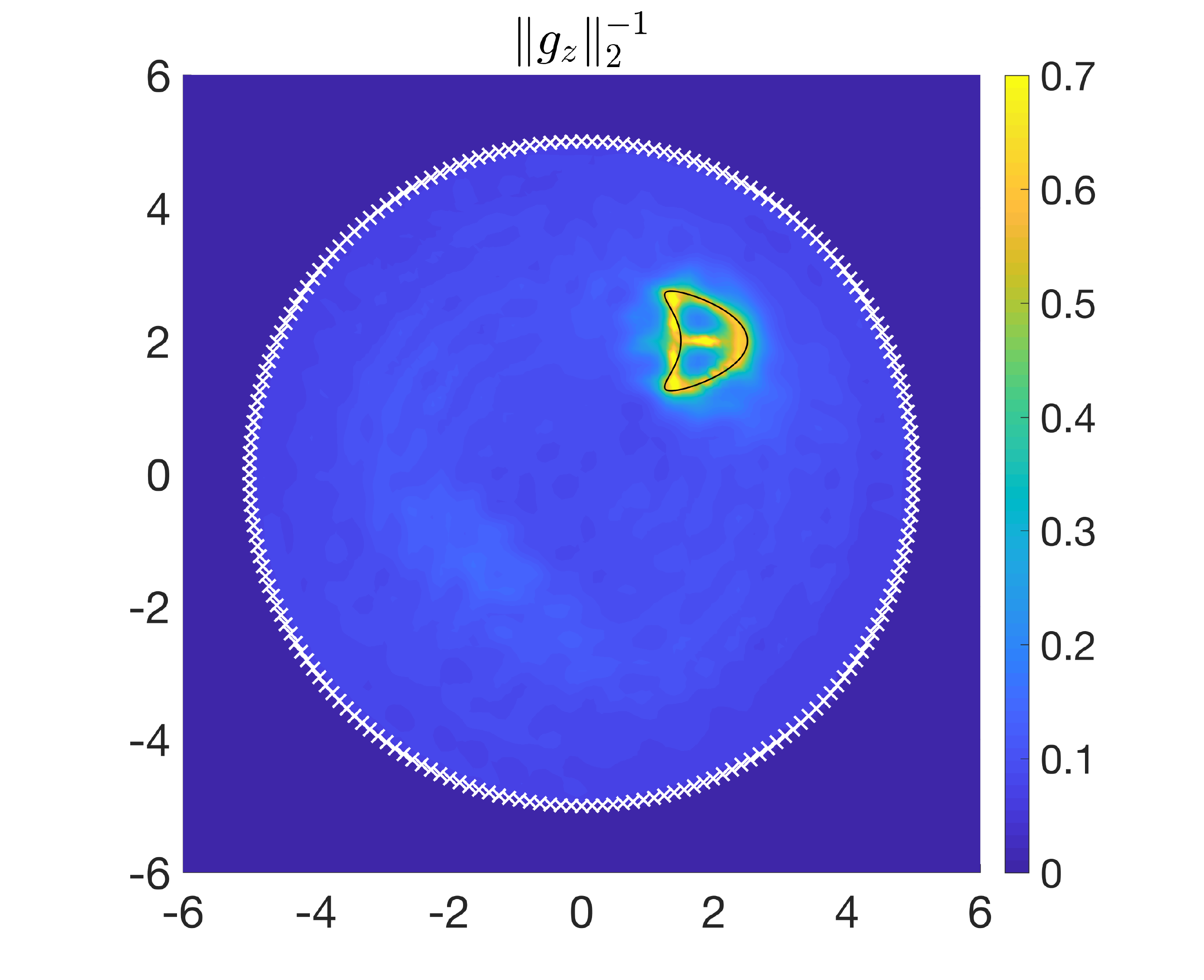} \\
\includegraphics[scale=\scl]{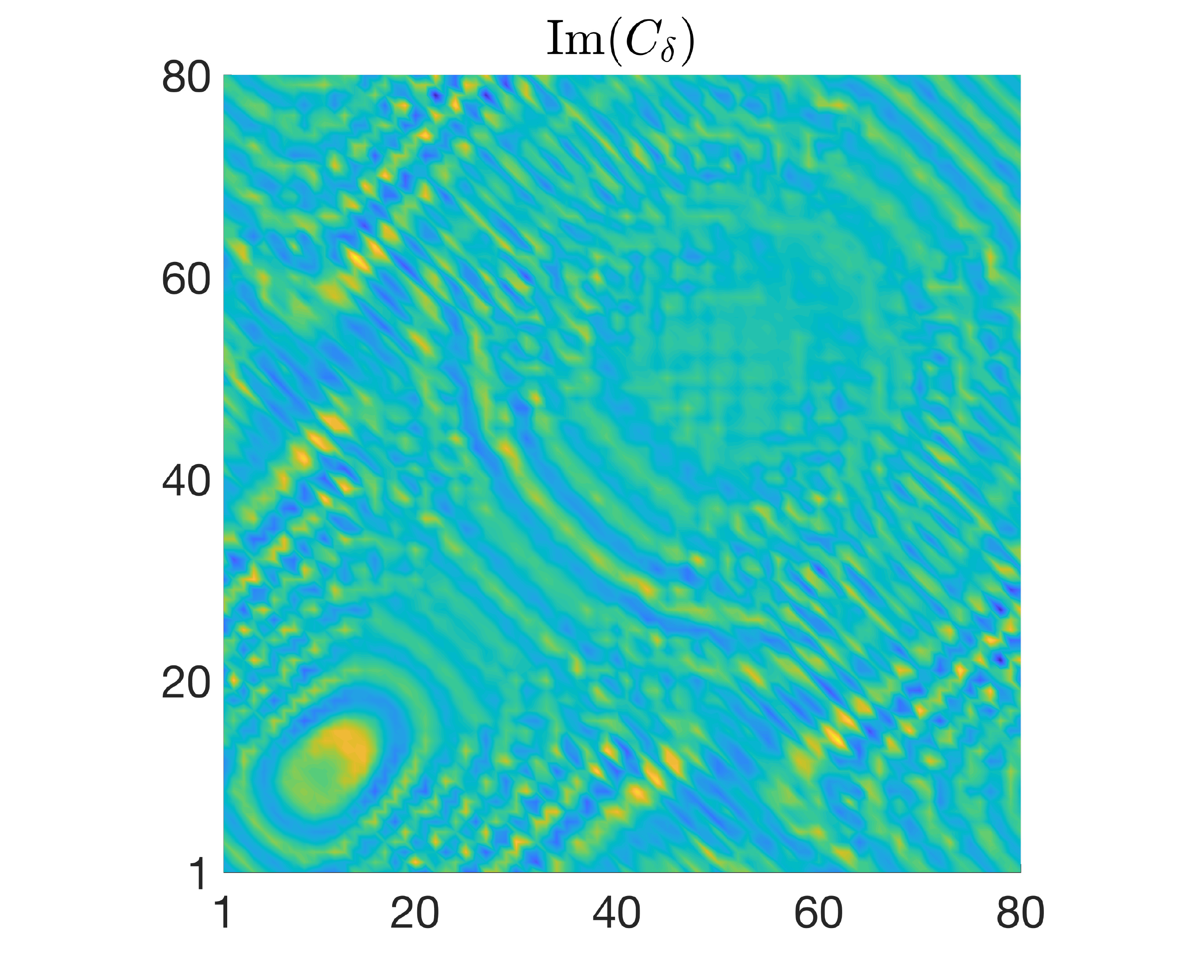} 
\includegraphics[scale=\scl]{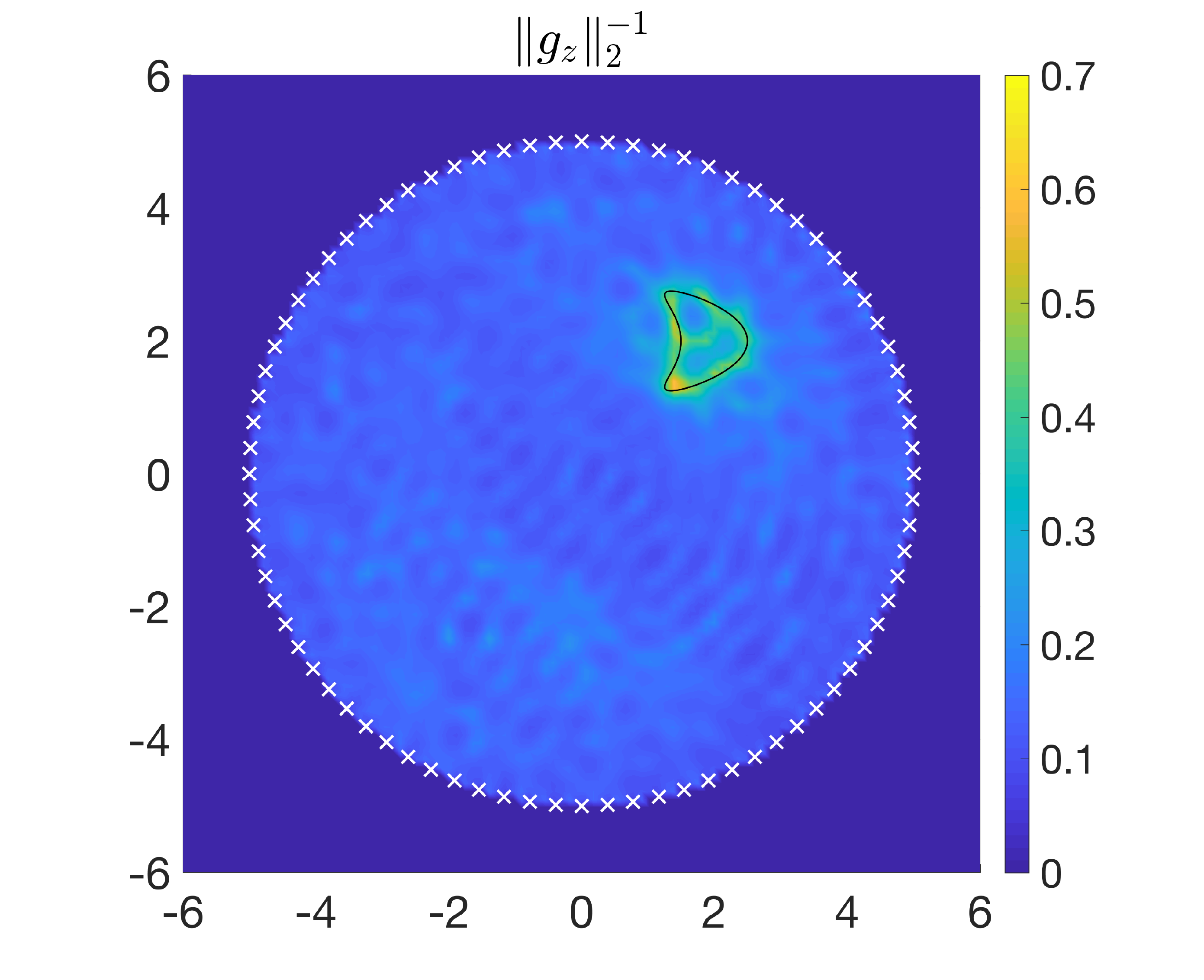}
\includegraphics[scale=\scl]{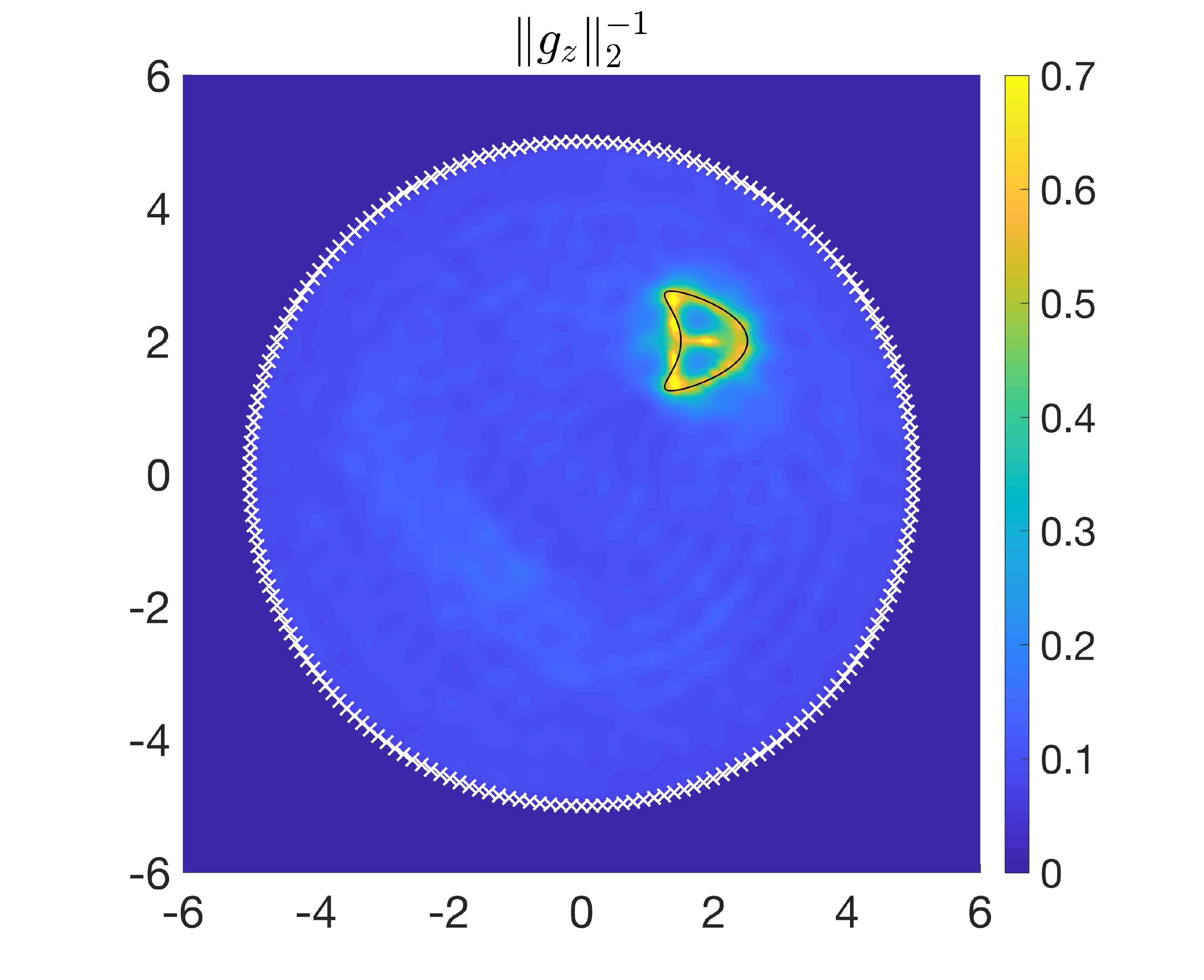} \\
\includegraphics[scale=\scl]{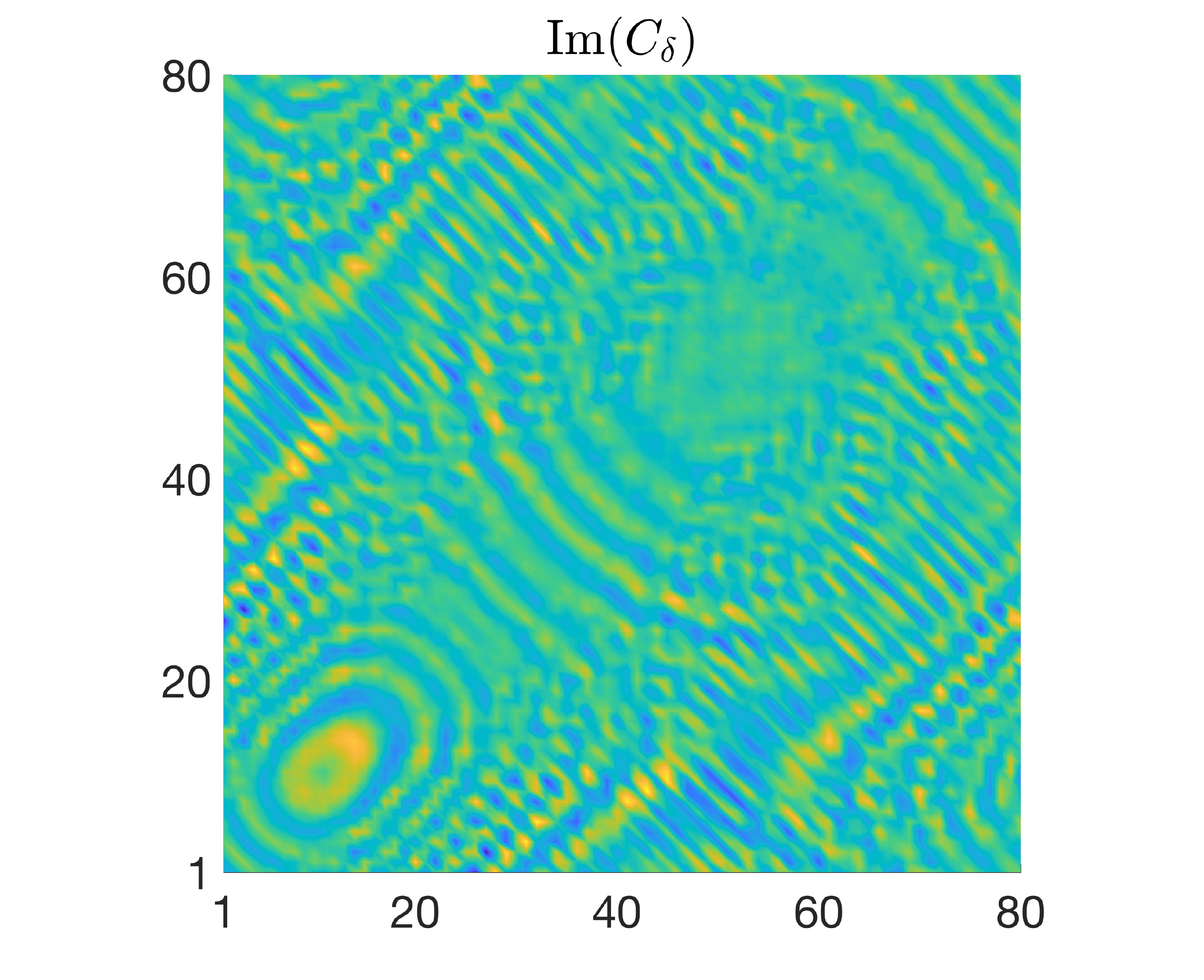} 
\includegraphics[scale=\scl]{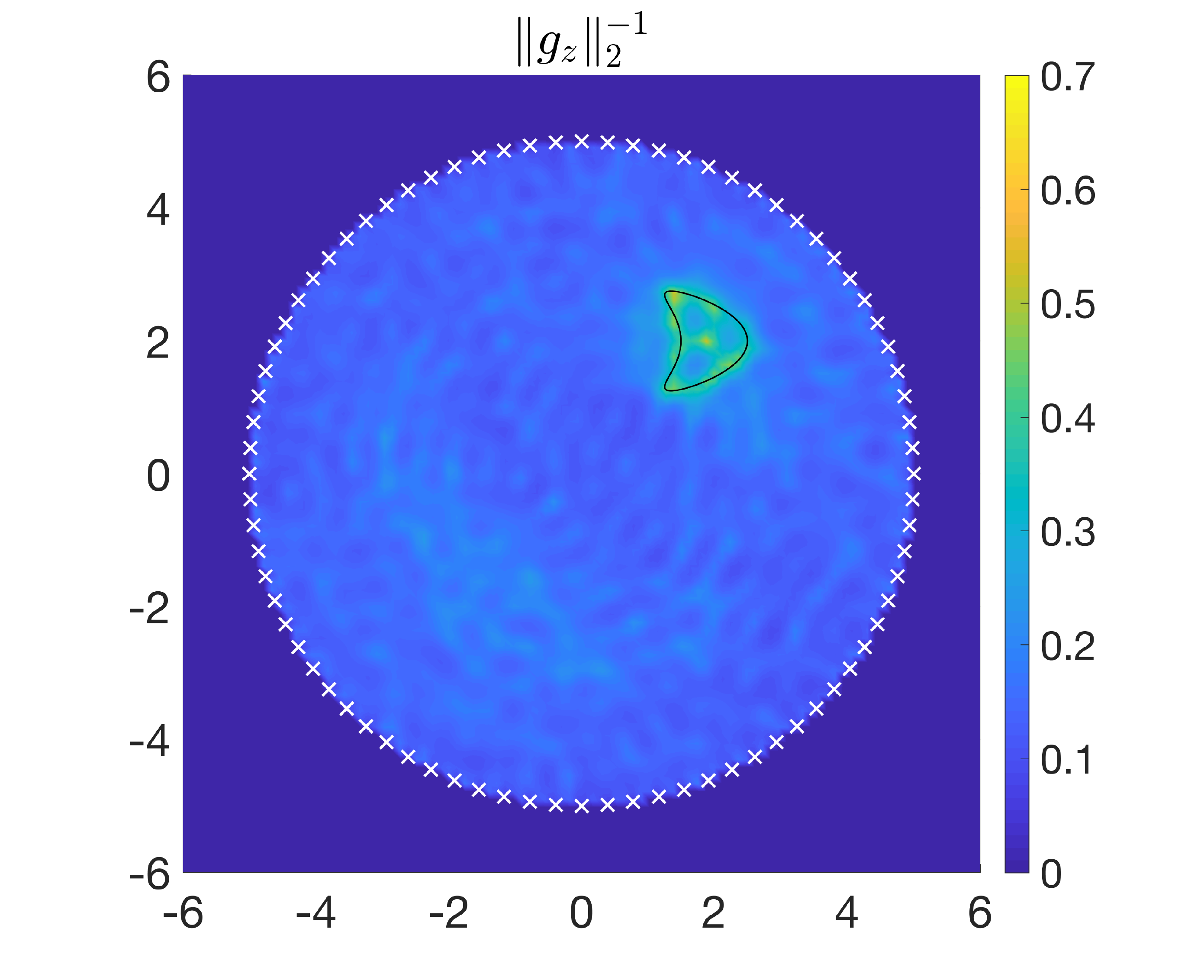}
\includegraphics[scale=\scl]{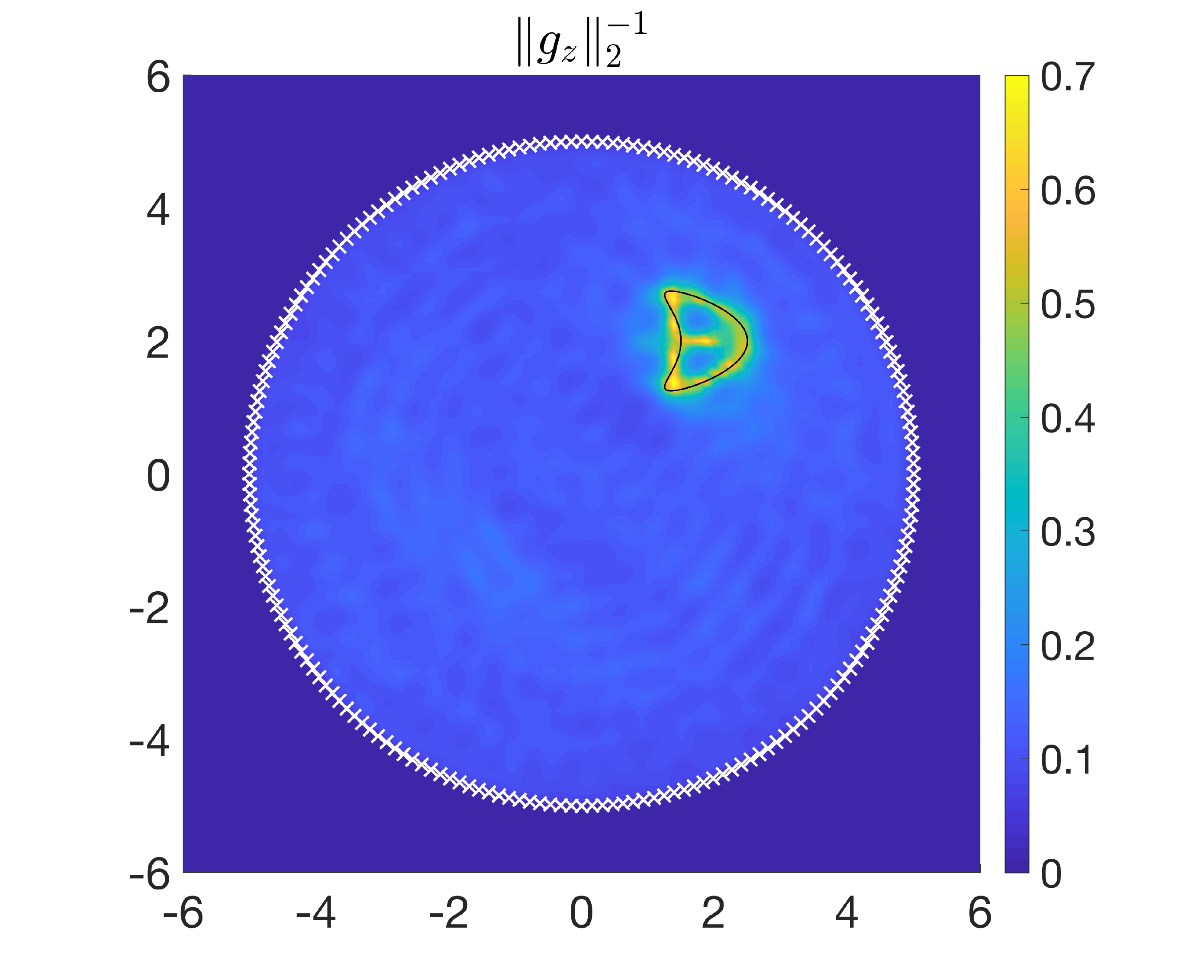}
\caption{Numerical results for a fixed number $L=80$ of random sources (first two columns) deteriorate as $\beta$ grows from $\beta=0.3$ (first row) to $\beta=0.6$ and $\beta =0.9$ (second/third rows). This is because the distribution of random sources is less and less equispaced as $\beta$ increases. Consequently, the entries \cref{eq:cross-cor-mat} of the matrix $C$, which are computed with the trapezoidal rule, become a less and less accurate approximation to the entries \cref{eq:imag-near-field-mat} of $I$ as $\beta$ grows. To improve accuracy, one may increase the number of sources---this is what we did in the third column, using $L=200$ random sources for all values of the parameter $\beta$.}
\label{fig:kite-beta}
\end{figure}

\paragraph{Influence of the shape of $\Sigma$ and the wavenumber $k$} The shape of $\Sigma$ where the random sources are positioned has little impact on the reconstructions. However, accurately computing its area is crucial to appropriately scale the matrix $C$ in \cref{eq:cross-cor-mat}. Regarding the wavenumber $k$, our numerical experiments revealed that doubling the value of $k$ necessitates doubling the number of point sources and measurement points; see \cref{fig:wavenumber}.

\begin{figure}
\centering
\def\scl{0.18}
\includegraphics[scale=\scl]{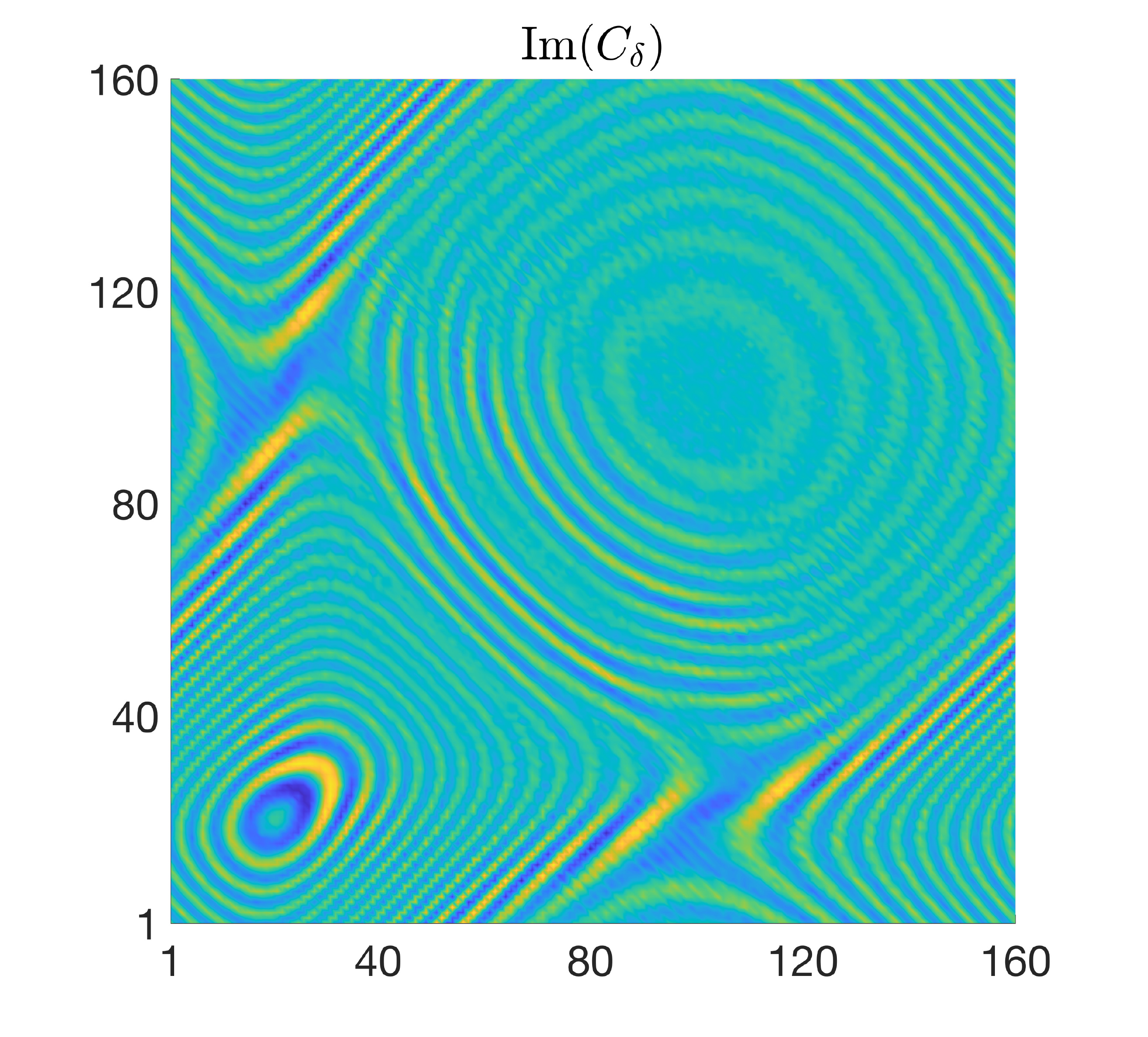} 
\includegraphics[scale=\scl]{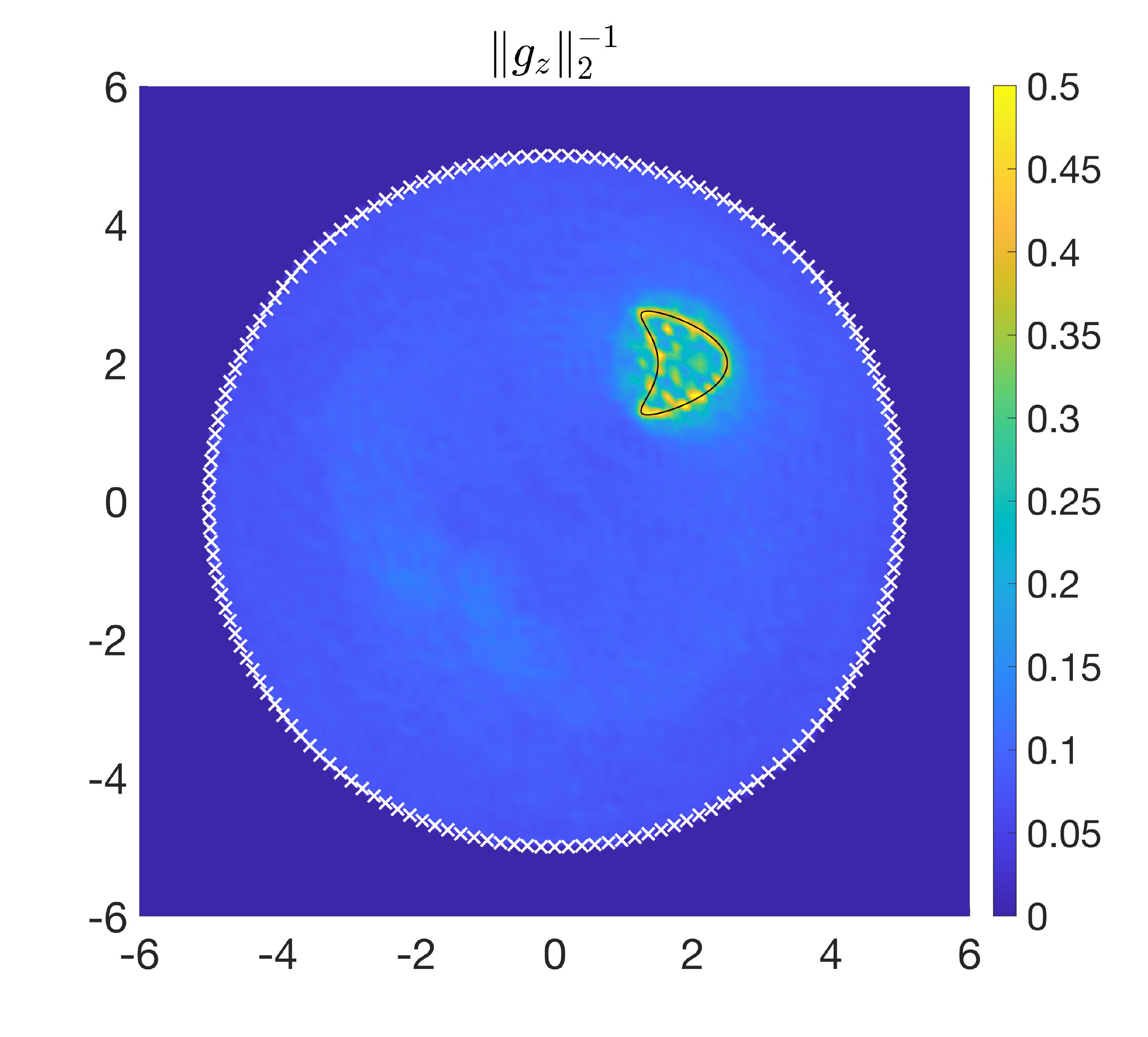} \\
\includegraphics[scale=\scl]{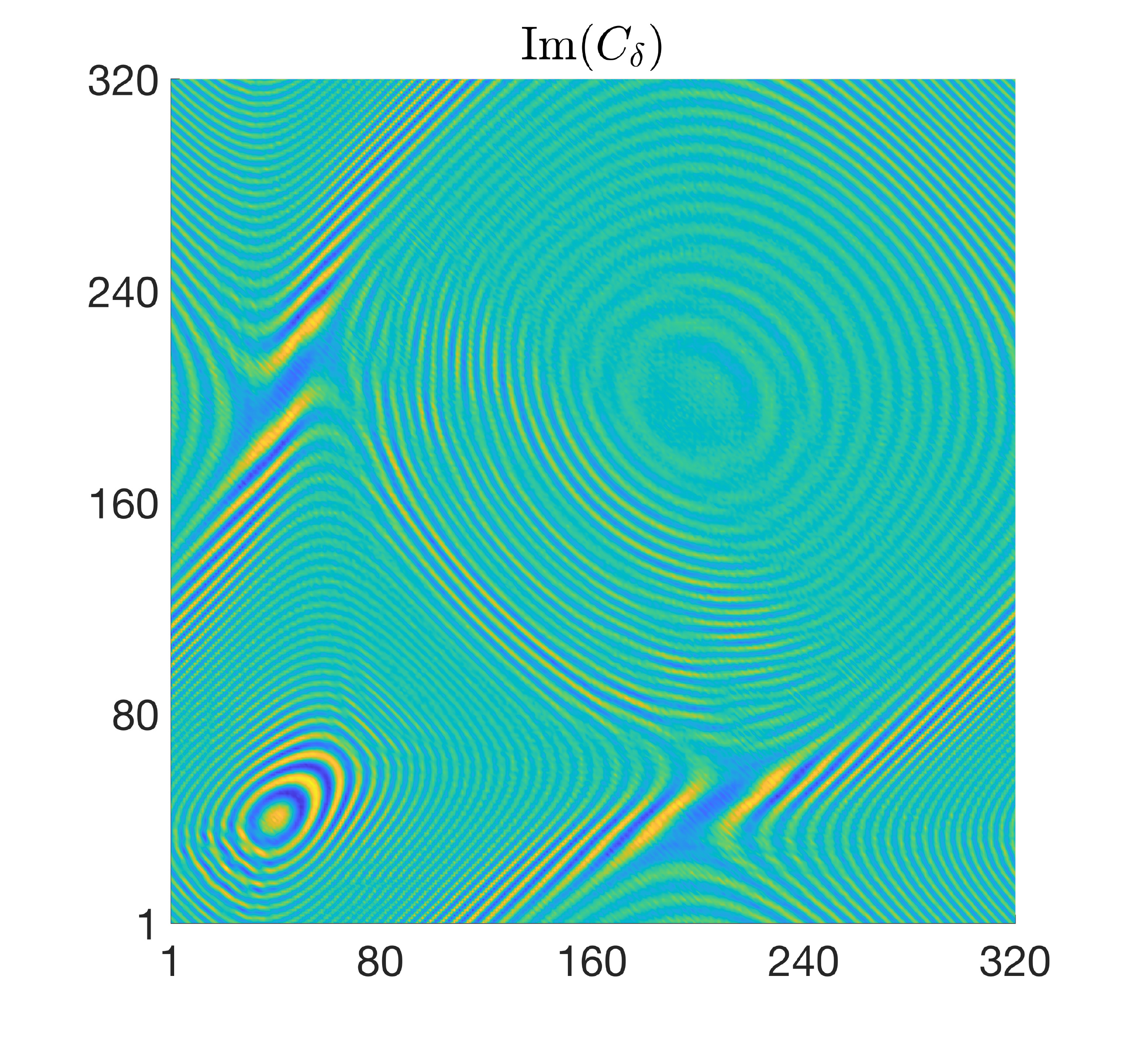} 
\includegraphics[scale=\scl]{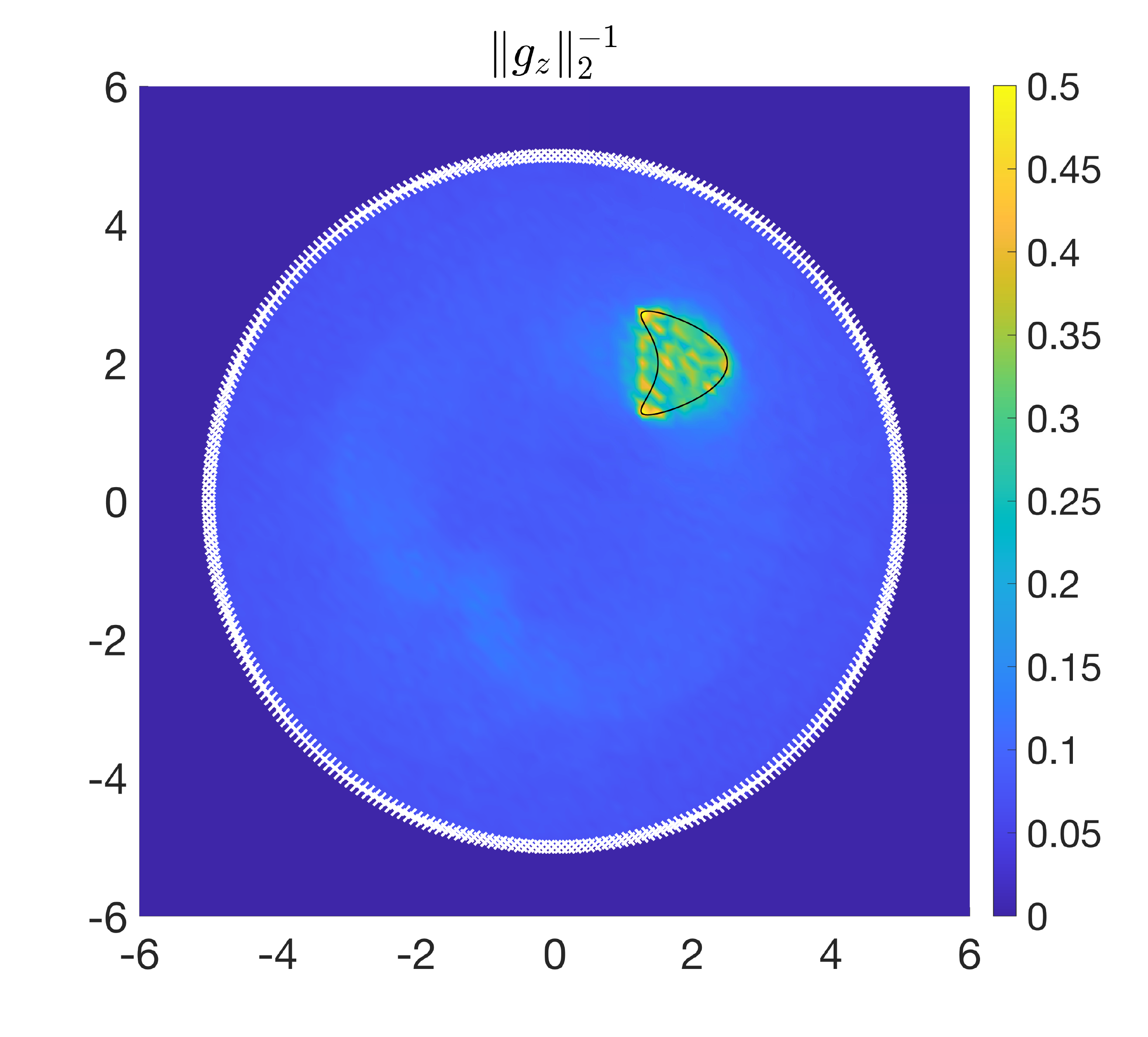}
\caption{In order to achieve accurate reconstructions at higher wavenumbers $k=4\pi$ (first row) and $k=8\pi$ (second row), we have found that it is essential to increase the number of point sources and measurement points to $160$ (first row) and $320$ (second row).}
\label{fig:wavenumber}
\end{figure}

\paragraph{Second setup} In the last experiment for full-aperture measurements, we test the second passive-image setup of \cref{sec:HK-id}. We consider the same measurement points $\bs{x}_j$ as before but this time we select deterministic sources $\bs{z}_\ell$ (by taking $\beta_\ell=0$ in \cref{eq:random-sources}), and compute the total fields $u(\bs{x}_j,\bs{z}_\ell)$. We then discretize the random process $\mathcal{U}(\bs{x})$ as follows:
\begin{align}
\mathcal{U}(\bs{x}) = \int_{\Sigma}u(\bs{x},\bs{z})n(\bs{z})dS(\bs{z}) \approx \sum_{\ell=1}^Lu(\bs{x},\bs{z}_\ell)n(\bs{z}_\ell).
\end{align}
To generate a realization of $\mathcal{U}(\bs{x})$, we draw $2L$ independent samples $(u_1,\ldots,u_L,v_1,\ldots,v_L)$ from the normal distribution $\mathcal{N}(0,\vert \Sigma\vert/2)$ and set $n(\bs{z}_\ell)=u_\ell+iv_\ell$, $1\leq\ell\leq L$. (The scaling of the distribution ensures that the sources verify \cref{eq:noise-distribution} so that \cref{eq:covariance} holds.) Finally, the statistical average is computed via $M$ realizations $\mathcal{U}^{(r)}(\bs{x})$ evaluated at $\bs{x}_j$ and $\bs{x}_m$:
\begin{align}\label{eq:covariance-approx}
\left<\mathcal{U}(\bs{x}_j)\overline{\mathcal{U}(\bs{x}_m)}\right> \approx \frac{1}{M}\sum_{r=1}^M\mathcal{U}^{(r)}(\bs{x}_j)\overline{\mathcal{U}^{(r)}(\bs{x}_m)}, \quad \quad 
1\leq j,m \leq J.
\end{align}
We show the results in \cref{fig:kite-setup2} for $J=L=M=200$---the results are not as good as for the first setup. The reason is that the empirical average in the right-hand side of \cref{eq:covariance-approx} is a rather poor approximation to the statistical average in the left-hand side (the relative error is proportional to $1/\sqrt{M}$). As we can see in \cref{fig:kite-setup2}  (left), the entries of the matrix are very noisy approximations to the entries of the matrix of \cref{fig:kite} (second row, left).

\begin{figure}
\centering
\def\scl{0.17}
\includegraphics[scale=\scl]{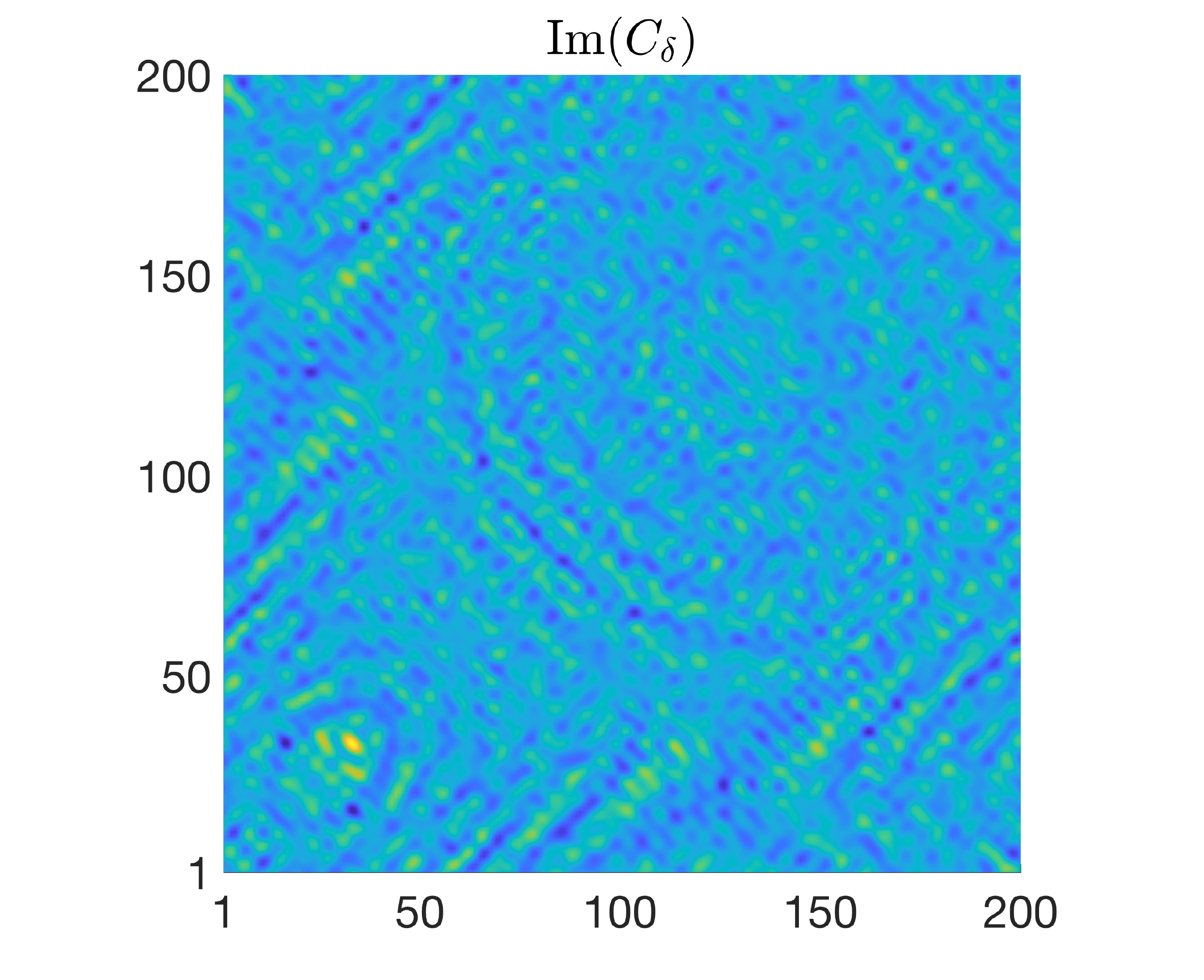} 
\includegraphics[scale=\scl]{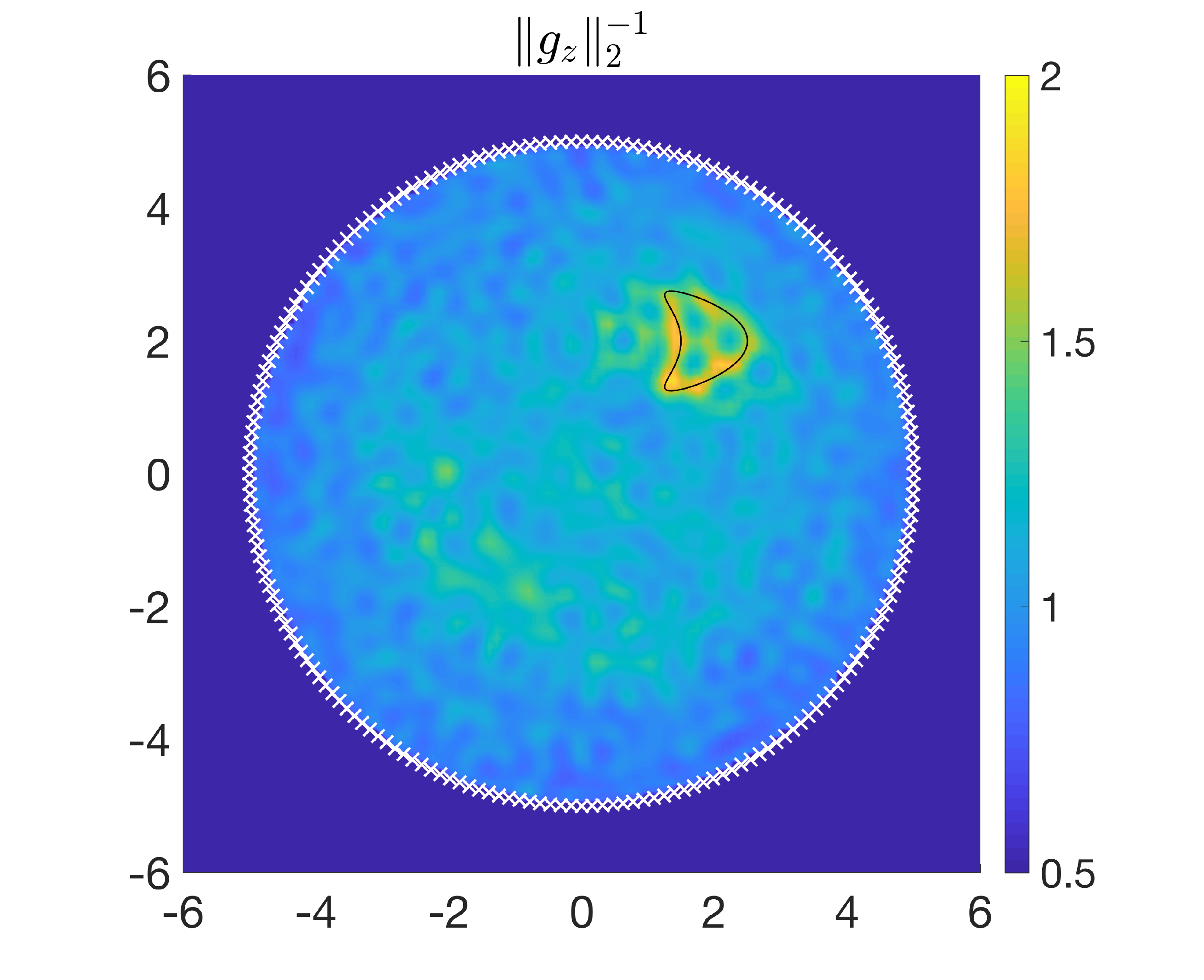}  
\includegraphics[scale=\scl]{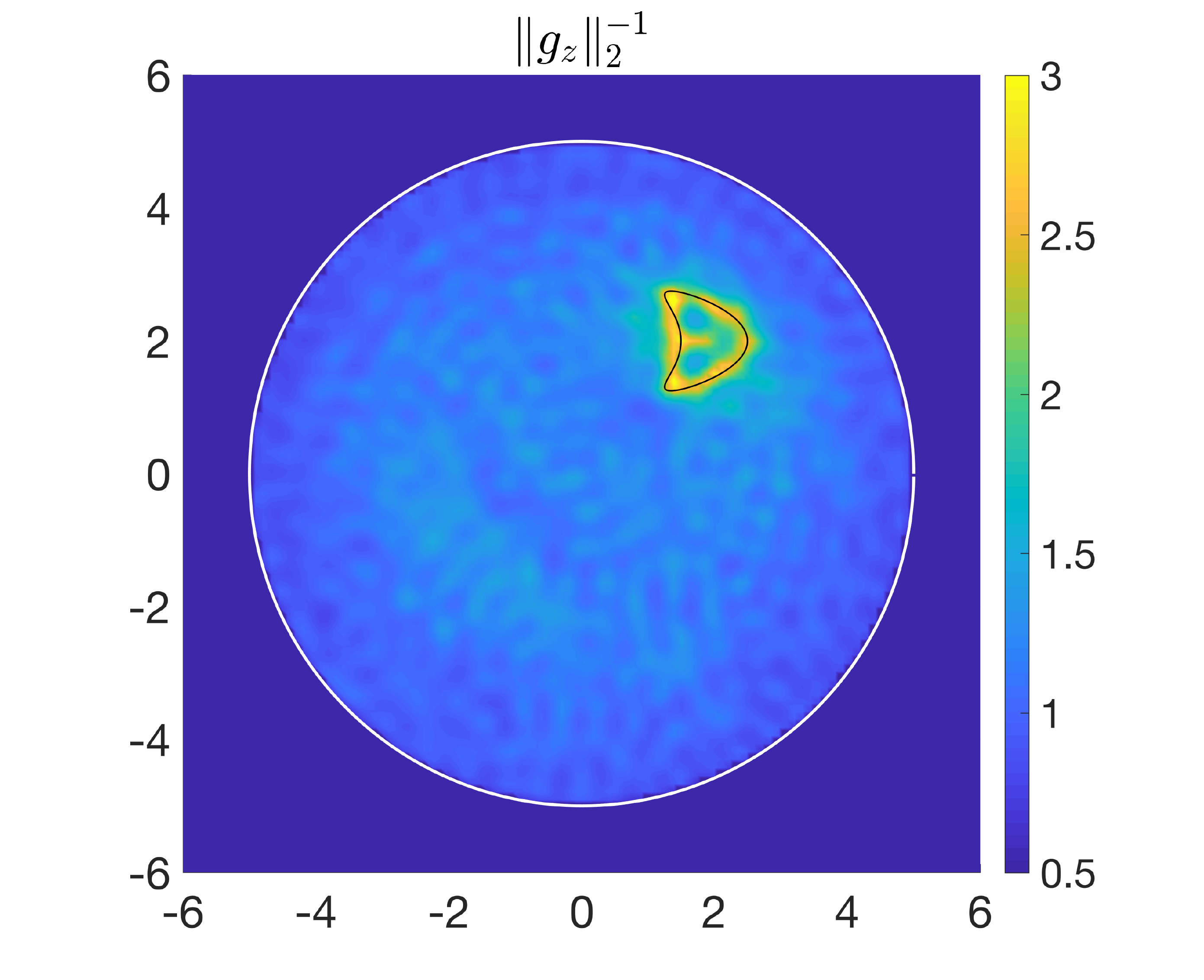}  
\caption{In the second setup, the entries of the cross-correlation matrix $C$ are computed via an empirical average from $M=200$ realizations, which generates an error $\OO(1/\sqrt{M})$ (left). The corresponding image is not as crisp as before (middle). Increasing the value of $M$ to $800$ improves the resolution (right).}
\label{fig:kite-setup2}
\end{figure}

\paragraph{Limited-aperture measurements} In this experiment, we consider limited-aperture measurements, as in \cite{audibert2017}. We show the results in \cref{fig:limi}. This setup yields poorer reconstructions for all three methods (it is a much harder problem), but our method based on the cross-correlation matrix $C$ gives comparable results to the method with $N$. The code that was used to generate the figures is available on the third author's \href{https://github.com/Hadrien-Montanelli/lsmlab}{GitHub} page.

\begin{figure}
\centering
\def\scl{0.18}
\includegraphics[scale=\scl]{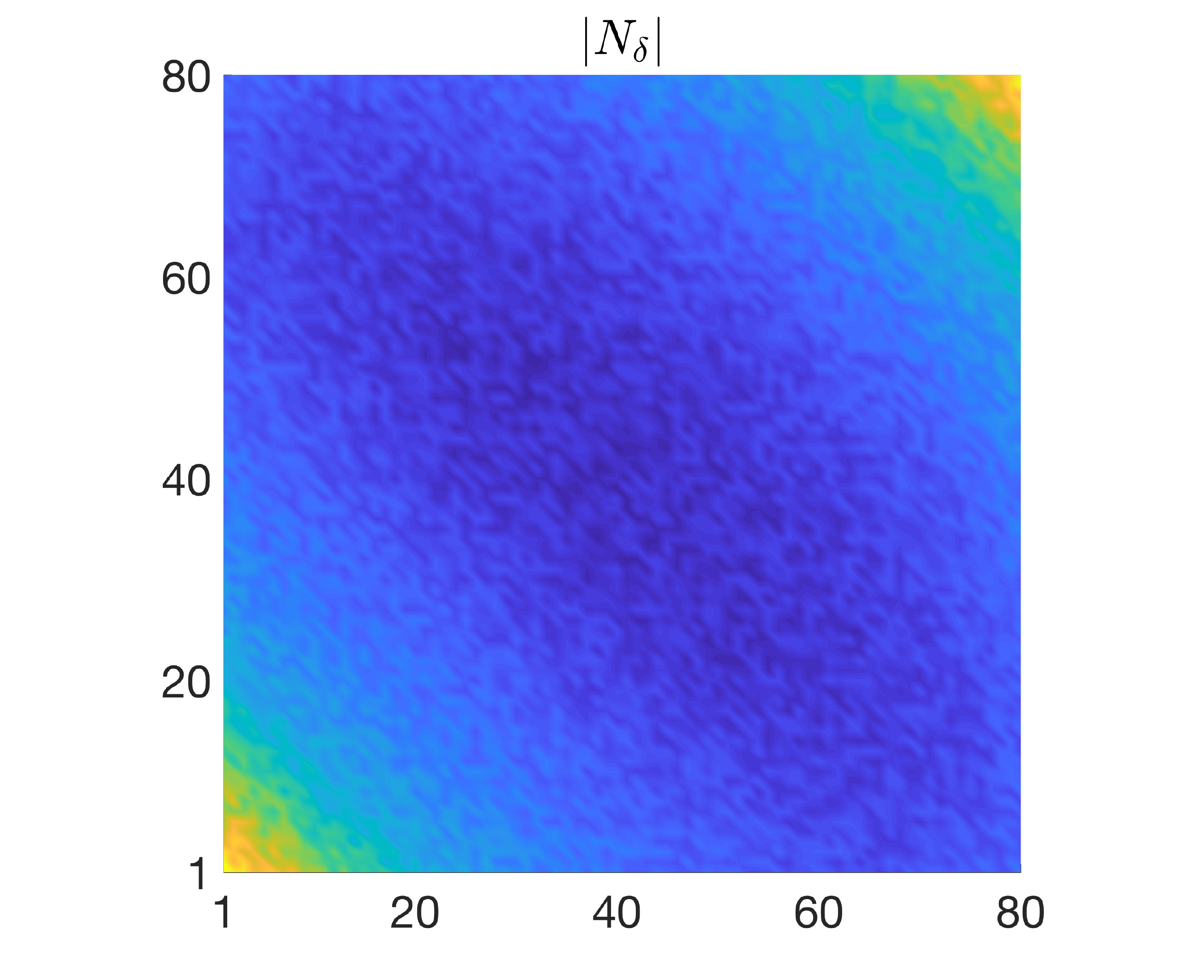} 
\includegraphics[scale=\scl]{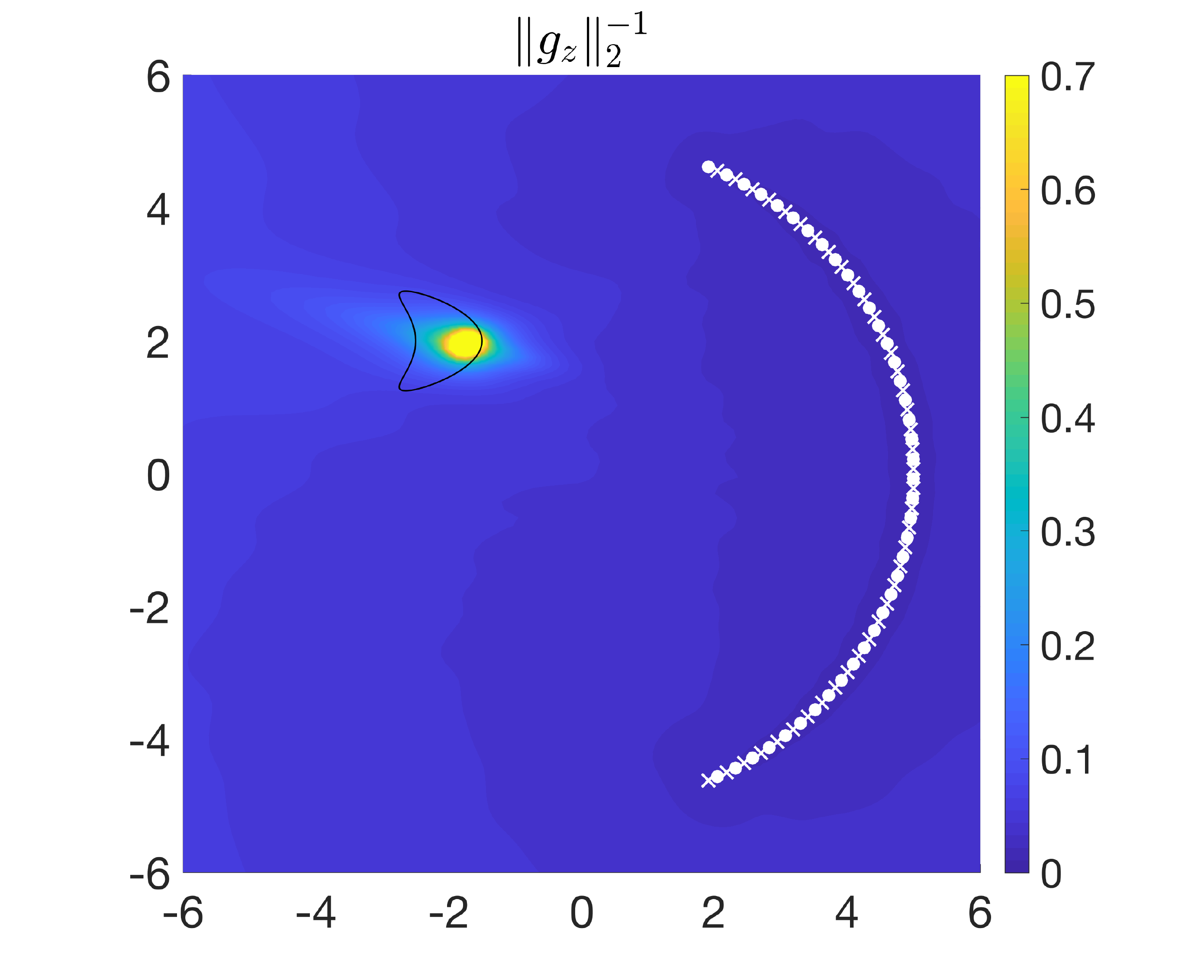} \\
\includegraphics[scale=\scl]{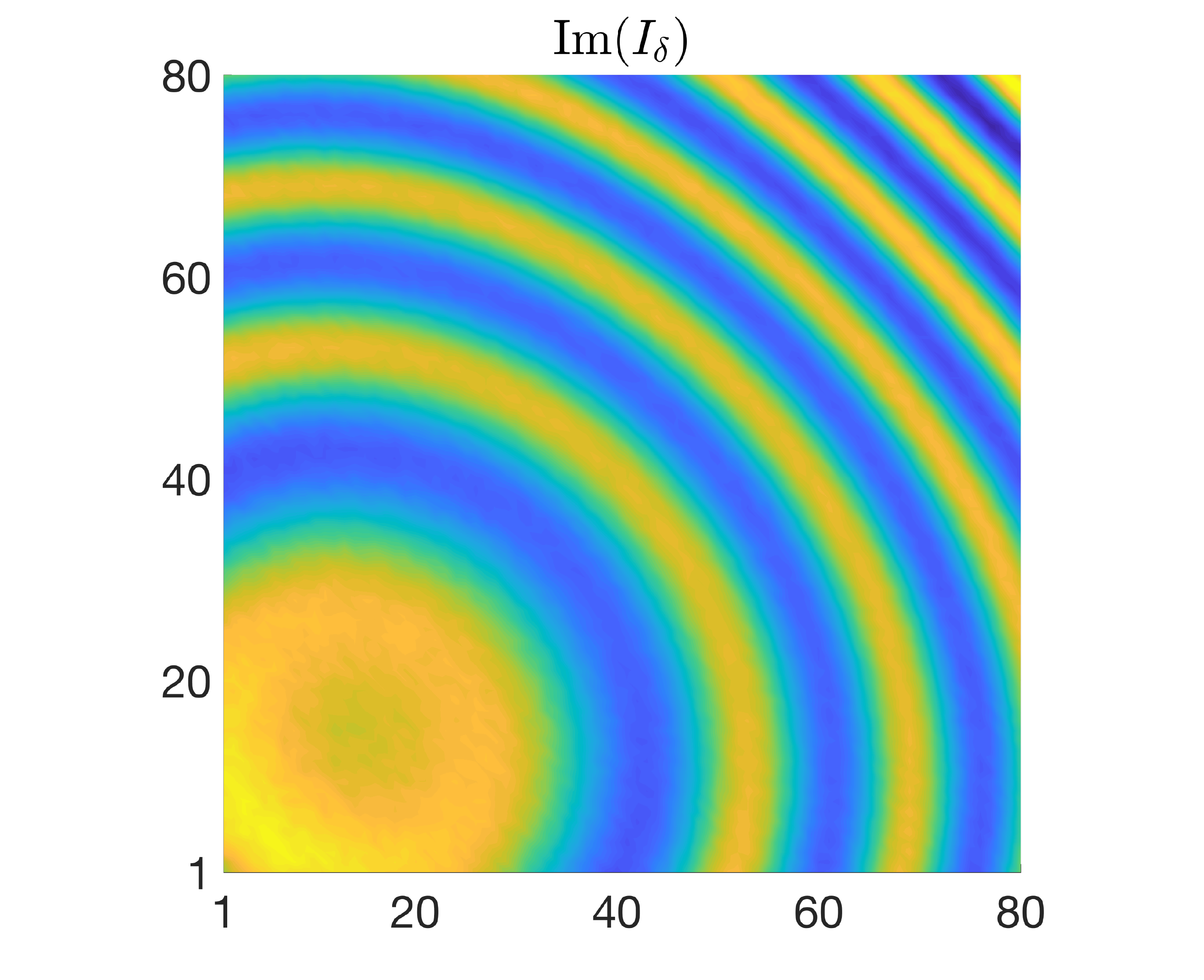} 
\includegraphics[scale=\scl]{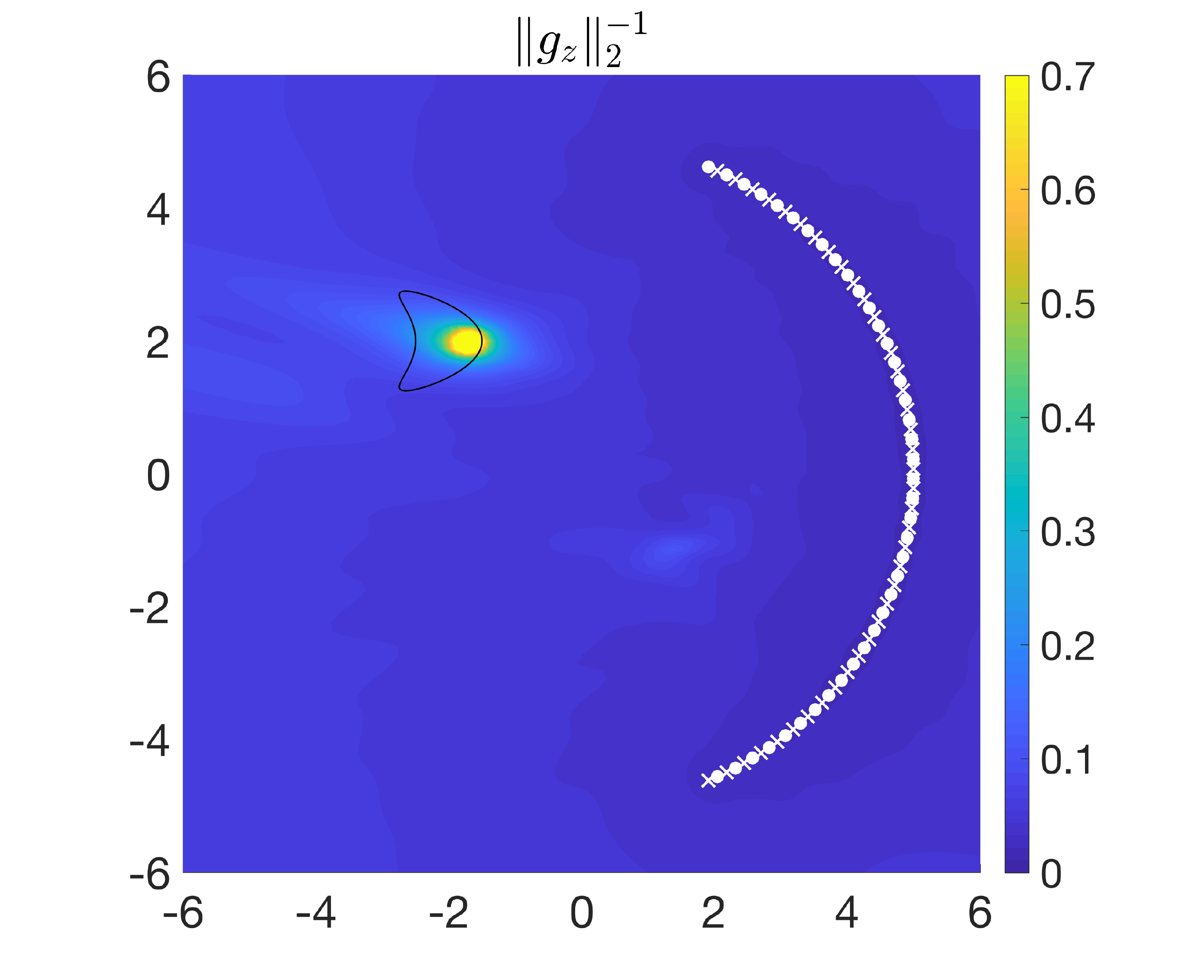}  \\
\includegraphics[scale=\scl]{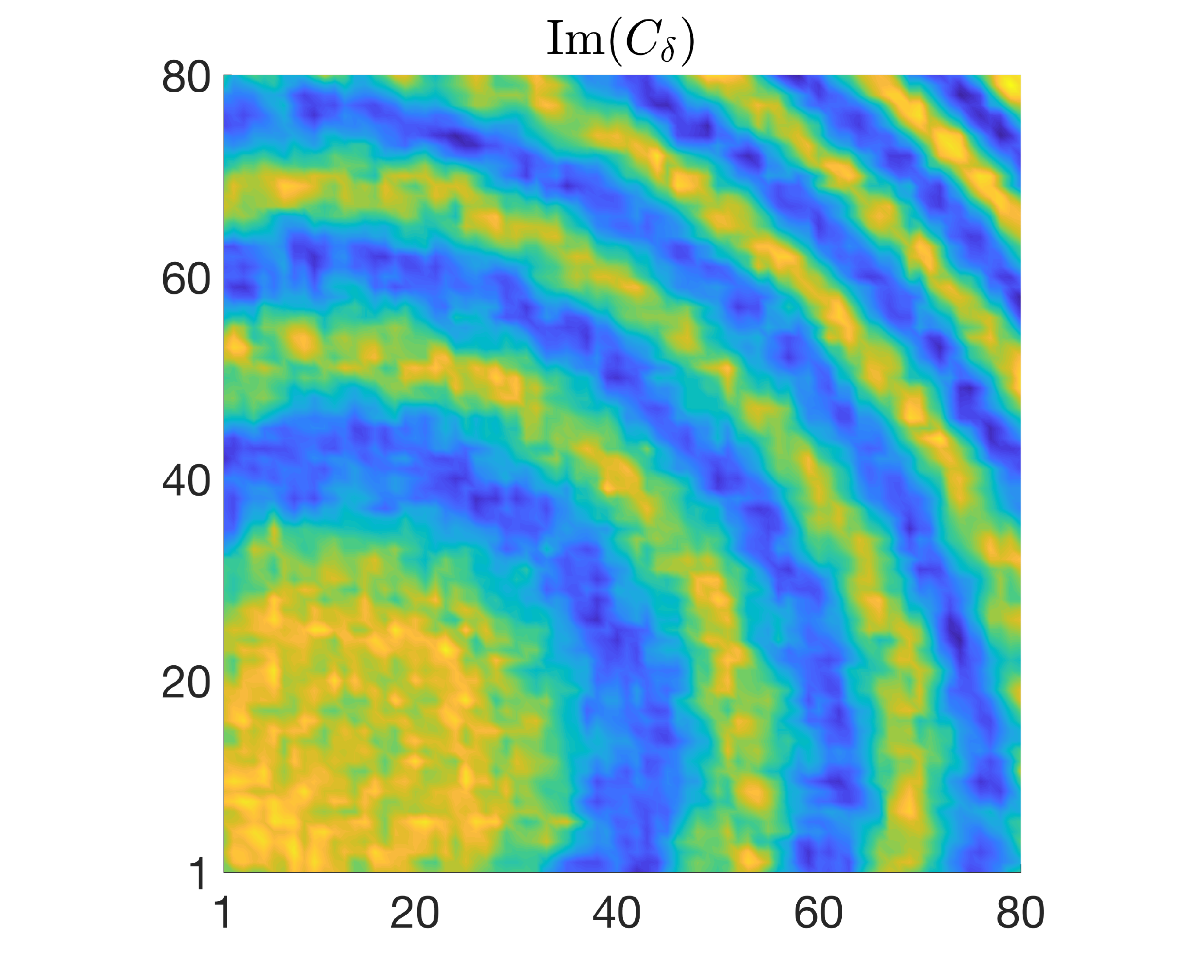} 
\includegraphics[scale=\scl]{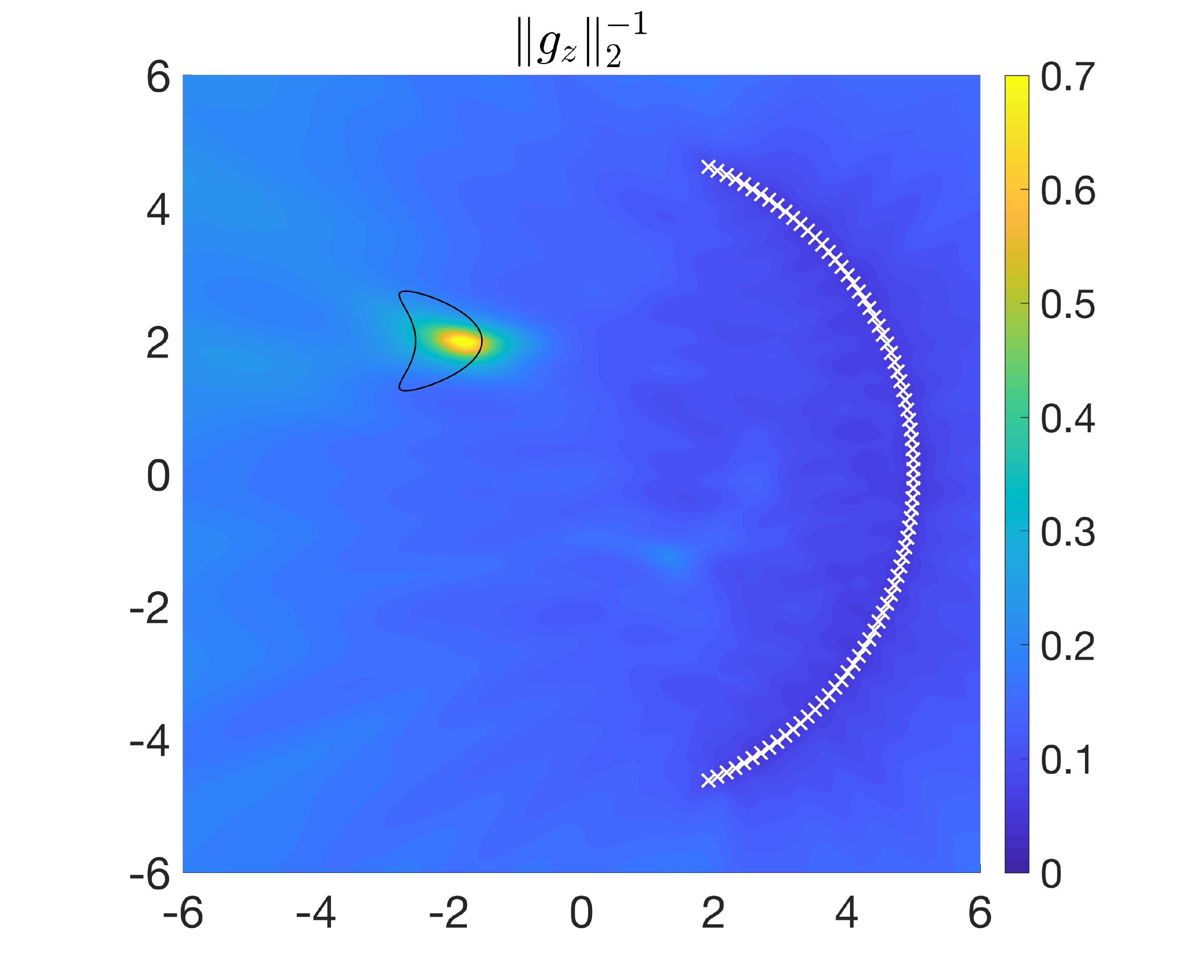}
\caption{For limited-aperture measurements, a perfect reconstruction of the shape should not be expected. However, our sampling method based on the first setup with cross-correlations (third row) gives comparable results to the LSM with the near-field and imaginary near-field matrices (first and second rows).}
\label{fig:limi}
\end{figure}

\section{Conclusions}

We have presented in this article an extension of the linear sampling method for solving the sound-soft inverse acoustic scattering problem with random sources. To prove our main theoretical result (\cref{thm:LSM-I}), we have followed the standard recipe, including factoring the near-field operator (\cref{thm:factorization}) and characterizing the range of the operator that maps boundary data to measurements (\cref{lem:range}). We have demonstrated the robustness and accuracy of our algorithms in \cref{sec:numerics} by considering both full- and limited-aperture measurements for different shapes.

How far are we from real-life applications? The primary hurdle is the assumption that the background is homogeneous and known, which may not be true in practical scenarios. This leads to noise in the data, the accurate modeling of which is nontrivial as it is not a simple additive noise \cite[Chap.~12]{garnier2016}. Moreover, the idealistic placement of sources needs to be customized for every experiment. Despite these limitations, the method is robust and presents compelling alternatives to existing techniques.

There are many ways in which this work could be profitably continued. For instance, one could look at the sound-hard inverse scattering problem---this entails using Neumann boundary conditions in \cref{eq:ext-Dirichlet}---as well as penetrable objects. One could also try and extend our procedure to configurations with deterministic sources and randomly distributed small scatterers (simulating a random medium) illuminating a defect.

\section*{Acknowledgments}
We thank the Interdisciplinary Centre for Defence and Security of the Polytechnic Institute of Paris for funding this work (PRODIPO project). We also thank the members of the Inria Idefix research team, in particular Lorenzo Audibert and Fabien Pourre, for fruitful discussions about the LSM. Finally, we are thankful to Julie Tran from Western University for her significant contribution to figure clarity.

\appendix
\section{Proof for the asymptotic model of small obstacles}\label{sec:appendix} Consider $L$ spheres of radii $r_\ell$ centered at points $\bs{c}_\ell\in\R^d$, $1\leq\ell\leq L$. For small radii $r_\ell$'s, the scattered field $u^s(\cdot,\bs{y})$ generated by a source point $\phi(\cdot,\bs{y})$ located at $\bs{y}$ may be approximated by 
\begin{align}
u^s(\bs{x},\bs{y}) \approx \sum_{\ell=1}^L\lambda_\ell\phi(\bs{c}_\ell,\bs{y})\phi(\bs{x},\bs{c}_\ell),
\end{align}
with reflection coefficients
\begin{align}
\lambda_\ell = \frac{4i}{H_0^{(1)}(kr_\ell)}\;(d=2), \quad \lambda_\ell = -4\pi r_\ell e^{-ik r_\ell}\;(d=3), \quad 1\leq\ell\leq L.
\end{align}
We refer to \cite{cassier2013} for details. In this case, the imaginary near-field operator $I$ has the form
\begin{align}
(Ig)(\bs{x}) \approx \sum_{\ell=1}^L\left[\Lambda_\ell(g)\phi(\bs{x},\bs{c}_\ell) - \overline{\Lambda_\ell(\overline{g})}\,\overline{\phi(\bs{x},\bs{c}_\ell)}\right],
\end{align}
with coefficients
\begin{align}
\Lambda_\ell(g) = \lambda_\ell\int_{B}\phi(\bs{c}_\ell,\bs{y})g(\bs{y})d\bs{y}, \quad 1\leq\ell\leq L.
\end{align}
Here the imaginary near-field equation $Ig_{\bs{z}}=\phi|_{B}(\cdot,\bs{z})$ reads
\begin{align}
\sum_{\ell=1}^L\left[\Lambda_\ell(g_{\bs{z}})\phi(\bs{x},\bs{c}_\ell) - \overline{\Lambda_\ell(\overline{g_{\bs{z}}})}\,\overline{\phi(\bs{x},\bs{c}_\ell)}\right] = \phi(\bs{x},\bs{z}), \quad \bs{x}\in B.
\label{eq:im-near-field}
\end{align}
Since both sides of \eqref{eq:im-near-field} satisfy the Helmholtz equation in $B$, it follows from the unique continuation principle that the equation \eqref{eq:im-near-field} is also valid for $\bs{x}\in\R^d\setminus\cup\{\bs{c}_\ell,\bs{z}\}$.

\begin{proposition} The imaginary near-field equation \eqref{eq:im-near-field} has a solution if and only if $\bs{z}\in\{\bs{c}_\ell,\,\ell=1,\ldots,L\}$. If $\bs{z}=\bs{c}_{\ell_0}$ for some $1\leq\ell_0\leq L$ and $g_{\bs{z}}$ is a solution of \eqref{eq:im-near-field}, then 
\begin{align}
& \int_{B}\phi(\bs{c}_{\ell_0},\bs{y})g_{\bs{z}}(\bs{y})d\bs{y} = \frac{1}{\lambda_{\ell_0}}, && \ell=\ell_0, \\
& \int_{B}\phi(\bs{c}_\ell,\bs{y})g_{\bs{z}}(\bs{y})d\bs{y} = 0, && \ell\neq\ell_0, \\
& \int_{B}\phi(\bs{c}_\ell,\bs{y})\overline{g_{\bs{z}}(\bs{y})}d\bs{y} = 0, && \forall\ell.
\end{align}
\end{proposition}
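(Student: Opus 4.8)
The plan is to exploit the fact that, once extended to $\R^d\setminus\cup\{\bs{c}_\ell,\bs{z}\}$ (as already noted in the text), equation \eqref{eq:im-near-field} is an identity between an outgoing (radiating) field and an incoming (absorbing) one, and then to read off the coefficients from the singularities at the source points. Writing $a_\ell=\Lambda_\ell(g_{\bs{z}})$ and $b_\ell=\overline{\Lambda_\ell(\overline{g_{\bs{z}}})}$, I would first recast \eqref{eq:im-near-field} as
\begin{align}
\sum_{\ell=1}^L a_\ell\,\phi(\bs{x},\bs{c}_\ell) - \phi(\bs{x},\bs{z}) = \sum_{\ell=1}^L b_\ell\,\overline{\phi(\bs{x},\bs{c}_\ell)}, \quad \bs{x}\in\R^d\setminus\cup\{\bs{c}_\ell,\bs{z}\}.
\end{align}
The left-hand side is a radiating solution of the Helmholtz equation away from the source points, while the right-hand side is an absorbing one. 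Comparing their far-field behaviors $e^{\pm ikr}/r^{(d-1)/2}$ outside a large ball shows that both far-field patterns must vanish; Rellich's lemma together with the unique continuation principle then forces each side to vanish identically, giving the two separate identities
\begin{align}
\sum_{\ell=1}^L a_\ell\,\phi(\cdot,\bs{c}_\ell) = \phi(\cdot,\bs{z}) \quad\text{and}\quad \sum_{\ell=1}^L b_\ell\,\overline{\phi(\cdot,\bs{c}_\ell)} = 0.
\end{align}

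I would handle the second identity first. The incoming point sources $\overline{\phi(\cdot,\bs{c}_\ell)}$ at the distinct centers $\bs{c}_\ell$ are linearly independent, since near any $\bs{c}_{\ell_1}$ only the $\ell_1$ term is singular (logarithmic in $d=2$, of order $1/|\cdot|$ in $d=3$) while the others are real-analytic. Matching singular parts yields $b_\ell=0$ for every $\ell$, hence $\Lambda_\ell(\overline{g_{\bs{z}}})=0$, and since $\lambda_\ell\neq0$ this is exactly $\int_{B}\phi(\bs{c}_\ell,\bs{y})\overline{g_{\bs{z}}(\bs{y})}\,d\bs{y}=0$ for all $\ell$. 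For the first identity I would argue the same way on the outgoing side. If $\bs{z}\notin\{\bs{c}_\ell\}$, the right-hand side is singular at $\bs{z}$ whereas the left-hand side is real-analytic there, a contradiction; so a solution can exist only when $\bs{z}\in\{\bs{c}_\ell\}$, which gives the ``only if'' direction. When $\bs{z}=\bs{c}_{\ell_0}$, comparing singular parts at each center yields $a_{\ell_0}=1$ and $a_\ell=0$ for $\ell\neq\ell_0$, which unwind to the first two stated relations.

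It then remains to prove the converse: when $\bs{z}=\bs{c}_{\ell_0}$ these $2L$ integral conditions are consistent and admit a solution $g_{\bs{z}}\in W(B)$. I would rephrase them in the $L^2(B)$ inner product $\langle\cdot,\cdot\rangle$ as $\langle g_{\bs{z}},\overline{\psi_\ell}\rangle=\delta_{\ell\ell_0}/\lambda_{\ell_0}$ and $\langle g_{\bs{z}},\psi_\ell\rangle=0$, where $\psi_\ell=\phi(\bs{c}_\ell,\cdot)$; note that $\bs{c}_\ell\notin\overline{B}$, so $\psi_\ell$ and $\overline{\psi_\ell}$ both solve the Helmholtz equation in $B$ and lie in $W(B)$. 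Seeking $g_{\bs{z}}$ in the span of $\{\psi_\ell,\overline{\psi_\ell}\}$, such a function exists provided this finite family is linearly independent in $W(B)$, which follows from the very outgoing/incoming splitting used above (a vanishing combination $\sum_\ell\alpha_\ell\phi(\cdot,\bs{c}_\ell)+\sum_\ell\beta_\ell\overline{\phi(\cdot,\bs{c}_\ell)}=0$ forces all $\alpha_\ell=\beta_\ell=0$). Substituting back confirms $a_{\ell_0}=1$, $a_\ell=0$ for $\ell\neq\ell_0$, and $b_\ell=0$, so the left-hand side of \eqref{eq:im-near-field} collapses to $\phi(\cdot,\bs{c}_{\ell_0})=\phi(\cdot,\bs{z})$. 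The main obstacle I anticipate is making the outgoing/incoming separation fully rigorous---carefully justifying that equality of a radiating and an absorbing field at infinity forces both far-field patterns to vanish---and keeping the singularity-matching clean; everything that remains is finite-dimensional linear algebra in the subspace spanned by the $\psi_\ell$ and $\overline{\psi_\ell}$.
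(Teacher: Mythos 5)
Your proof is correct and follows essentially the same route as the paper's: rule out $\bs{z}\notin\{\bs{c}_\ell\}$ by the singularity mismatch at $\bs{z}$, extract the coefficient identities from the linear independence of the $\phi(\cdot,\bs{c}_\ell)$'s and $\overline{\phi(\cdot,\bs{c}_\ell)}$'s, and obtain existence by seeking $g_{\bs{z}}$ as a linear combination of these functions. The only difference is that you make explicit the radiating/absorbing splitting (Rellich plus unique continuation) behind the linear independence and the finite-dimensional Gram-matrix argument behind existence, both of which the paper leaves as unstated observations.
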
 

\begin{proof} The proof is similar to that of \cite[Thm.~2]{haddar2012}. Firstly, we note that if $\bs{z}\neq\bs{c}_\ell$ for all $1\leq\ell\leq L$, then we cannot obtain a solution of \eqref{eq:im-near-field} since the left-hand side remains bounded as $\bs{x}$ approaches $\bs{z}$, while the right-hand side is singular. Secondly, we observe that the $\phi(\cdot,\bs{c}_\ell)$'s and $\overline{\phi(\cdot,\bs{c}_\ell)}$'s are linearly independent functions. Lastly, if $\bs{z}=\bs{c}_{\ell_0}$ for some $\ell_0$ and $g_{\bs{z}}$ solves \eqref{eq:im-near-field}, then the independence yields
\begin{align}
& \Lambda_{\ell_0}(g_{\bs{z}}) = \lambda_{\ell_0}\int_{B}\phi(\bs{c}_{\ell_0},\bs{y})g_{\bs{z}}(\bs{y})d\bs{y} = 1, && \ell=\ell_0, \\
& \Lambda_{\ell}(g_{\bs{z}}) = \lambda_\ell\int_{B}\phi(\bs{c}_\ell,\bs{y})g_{\bs{z}}(\bs{y})d\bs{y} = 0, && \ell\neq\ell_0, \\
& \overline{\Lambda_{\ell}(\overline{g_{\bs{z}}})} = \overline{\lambda_\ell}\int_{B}\overline{\phi(\bs{c}_\ell,\bs{y})}g_{\bs{z}}(\bs{y})d\bs{y} = 0, && \forall\ell.
\end{align}
For the existence of solutions, we may take a function $g_{\bs{z}}$ that is a linear combination of the $\phi(\cdot,\bs{c}_\ell)$'s and $\overline{\phi(\cdot,\bs{c}_\ell)}$'s such that the previous system of equations is satisfied.
\end{proof}

\bibliographystyle{siamplain}
\bibliography{references.bib}

\end{document}